\newcommand{\N}{\mathbb N}
\newcommand{\Z}{\mathbb Z}
\newcommand{\F}{\mathbb F}
\DeclareMathOperator{\wt}{wt}
\def\old@comma{,}
    \old@comma\discretionary{}{}{}%
\begin{document}
\title{On the inverses of Kasami and Bracken-Leander exponents}
\author{Lukas K\"olsch} 
\institute{Lukas K\"olsch \at
              University of Rostock, Germany \\
              \email{lukas.koelsch@uni-rostock.de}           
}
\date{Received: date / Accepted: date}
\maketitle
\begin{abstract}We explicitly determine the binary representation of the inverse of all Kasami exponents $K_r=2^{2r}-2^r+1$ modulo $2^n-1$ for all possible values of $n$ and $r$. This includes as an important special case the APN Kasami exponents with $\gcd(r,n)=1$. As a corollary, we determine the algebraic degree of the inverses of the Kasami functions. In particular, we show that the inverse of an APN Kasami function on $\F_{2^n}$ always has algebraic degree $\frac{n+1}{2}$ if $n\equiv 0 \pmod 3$. For $n\not\equiv 0 \pmod 3$ we prove that the algebraic degree is bounded from below by $\frac{n}{3}$. We consider Kasami exponents whose inverses are quadratic exponents or Kasami exponents. We also determine the binary representation of the inverse of the Bracken-Leander exponent $BL_r=2^{2r}+2^r+1$ modulo $2^n-1$ where $n=4r$ and $r$ odd. We show that the algebraic degree of the inverse of the Bracken-Leander function is $\frac{n+2}{2}$.
\keywords{Kasami Exponent \and Bracken-Leander Function \and Modular Inversion \and Algebraic Degree \and APN functions}

\end{abstract}

\section{Introduction}

Vectorial Boolean function play a big role in cryptography because of their importance in the construction of S-boxes in block ciphers. To ensure resistance against differential attacks, S-boxes should have low differential uniformity \cite{EC:Nyb}. 

\begin{definition}
	A function $f \colon \F_{2^n} \rightarrow \F_{2^n}$ has differential uniformity $d$, if 
	\begin{equation*}
		d=\max_{a\in \F_{2^n}^*,b \in \F_{2^n}}\left| \{x \colon f(x)+f(x+a)=b\}\right|.
	\end{equation*}
	A function with differential uniformity $2$ is called almost perfect nonlinear (APN) on $\F_{2^n}$.
\end{definition}
Clearly, the differential uniformity is always a multiple of $2$ so APN functions have the lowest possible differential uniformity and give the best protection against differential attacks. In addition to applications in cryptography, APN functions are connected to coding theory and reversed Dickson polynomials \cite{EC:Nyb},  \cite{DDC:CCZ98}, \cite{FFA:HMSY09}. Generally, finding families of APN functions is difficult and remains a challenge in this research area. One of the best understood classes of APN functions are APN monomials (see Table \ref{t:APN}). If the monomial $x \mapsto x^l$, $1 \leq l \leq 2^n-2$ is APN on $\F_{2^n}$, we call $l$ an APN exponent on $\F_{2^n}$. We denote by $\wt(l)$ the binary weight of $l$, i.e. the number of ones in its binary expansion. The binary weight of $l$ is precisely the algebraic degree of the function $x \mapsto x^l$. The algebraic degree is an important cryptographic property since mappings with low algebraic degree are potentially more vulnerable to attacks like higher-order differential attacks \cite{higherorder} or algebraic attacks \cite{algebraic}. It is well known that  the differential uniformity of a permutation is invariant under taking the inverse. In particular, if $l$ is an APN exponent and invertible in $\Z_{2^n-1}$ then the inverse $l^{-1}$ of $l$ modulo $2^n-1$ is an APN exponent as well. We say that $l$ and $l'$ are \emph{cyclomotic equivalent} if $l'\equiv 2^il \pmod{2^n-1}$ for some $i$. Differential uniformity is also invariant under cyclotomic equivalence. If $l$ and $l'$ are cyclotomic equivalent then the binary representation of $l'$ is just a cyclic shift of the binary representation of $l$. Additionally, if $l'$ is invertible modulo $2^n-1$ then $l'^{-1}\equiv 2^{-i}l^{-1} \pmod{2^n-1}$. To fully classify the APN monomials, it is thus necessary to determine the inverse of the known APN exponents (if they exist). It is known that APN exponents are invertible if and only if $n$ is odd (see e.g. \cite[Proposition 9.19.]{carlet}). Determining the explicit binary representations of the inverses of the known APN exponents is thus an interesting problem. The precise binary representations immediately also give the algebraic degree of the function $x \mapsto x^{l^{-1}}$. This has been done for all known APN exponents except for the Kasami exponents (see Table \ref{t:APN}). This paper will close this gap and find an explicit expression for the inverses of all Kasami exponents (if they exist). We will also deal with the non-APN Kasami exponents.

\begin{table}[ht]
\captionsetup{font=scriptsize}
	\centering
	\begin{tabular}{ ||c|c|c|c|c|| } 
	 \hline
		& Exponent & Conditions & Algebraic Degree & Inverse determined in\\
	 \hline  \hline
	 Gold & $2^r+1$ & $\gcd(r,n)=1$  & $2$ & \cite{EC:Nyb}\\
	  &  & $r<n/2$ & & \\
	 \hline
	 Kasami & $2^{2r}-2^r+1$ & $\gcd(r,n)=1$ & $r+1$& This paper\\
		  &  & $r<n/2$ & & \\
	 \hline
	 Welch & $2^t+3$ &  & $3$& \cite{FFA:KyuSud14}\\
	 \hline
	 Niho & $2^t-2^{\frac{t}{2}}-1$ & $t$ even & $\frac{t+2}{2}$ & \cite{Niho},\cite{FFA:KyuSud14}\\
	 & $2^t-2^{\frac{3t+1}{2}}-1$ & $t$ odd& $t+1$ &\\
	 \hline
	 Inverse & $2^{2t}-1$ &  & $n-1$& Obvious\\
	 \hline
	 Dobbertin & $2^{4r}+2^{3r}+2^{2r}+2^r-1$ & $5r=n$  & $r+3$& \cite{FFA:KyuSud14}\\
	 \hline
	\end{tabular}
	\caption{List of known APN exponents over $\F_{2^n}$ with $n=2t+1$ up to inversion and cyclotomic equivalence}
	\label{t:APN}
\end{table}

The objective is thus the following: Find the inverse of $K_r=2^{2r}-2^r+1$ modulo $2^n-1$ for all $r,n$. Compared to other APN exponents, determining the inverses of the Kasami exponents is particularly challenging because they are independent from the field size. APN exponents with this property are called \textit{exceptional APN exponents}. It was shown in \cite{JA:FerMcG11} that Gold and Kasami exponents are the only exceptional APN exponents. Finding the inverses of Gold exponents is relatively easy because of their low binary weight. In contrast, the algebraic degree of the Kasami exponents is unbounded which makes the determination of the inverses much harder. 

In \cite{FFA:KyuSud14}, a method to find the inverse of a fixed exponent $l$ modulo $2^n-1$ for arbitrary $n$ was given. This technique was used to determine the inverses of the second Kasami exponent $K_2=13$. Unfortunately, it is unclear how to use this approach to determine the inverses of all (infinitely many) Kasami exponents. In fact, just determining the binary weight of the inverses of Kasami exponents is mentioned as an open problem in \cite{FFA:KyuSud14}. 

As mentioned earlier, invertible APN exponents do not exist in even dimension. In fact, no APN permutations in even dimension $n \neq 6$ have been discovered yet, so permutations in even dimension with differential uniformity $4$ are of great interest and have been the subject of much research. In this case, it is also interesting to consider monomials with differential uniformity $4$. For a complete list of known families of $4-$differentially uniform permutation monomials in even dimension see Table \ref{t:4uni}. In the next section, we will find the binary representation of the inverses of all invertible Gold exponents and in the fourth section, we will also determine the binary representation of the inverse of the Bracken-Leander function. With this, the inverses of all known exponents that produce monomials with differential uniformity $2$ in odd dimension or $4$ in even dimension are determined.

\begin{table}[ht]
\captionsetup{font=scriptsize}
	\centering
	\begin{tabular}{ ||c|c|c|c|c|| } 
	 \hline
		& Exponent & Conditions & Algebraic Degree &Inverse determined in\\  [0.5ex] 
	 \hline  \hline
	 Gold & $2^r+1$ & $t$ odd, $\gcd(r,n)=2$  & $2$ & \cite{FFA:KyuSud14}, This paper \\
		  &  & $r<n/2$ & & \\
	 \hline
	 Kasami & $2^{2r}-2^r+1$ &$t$ odd, $\gcd(r,n)=2$ & $r+1$& This paper\\
		  &  & $r<n/2$ & & \\
	 \hline
	 Inverse & $2^{n}-2$ &  & $n-1$& Obvious\\
	 \hline
	 Bracken-Leander & $2^{2r}+2^r+1$ & $4r=n$, $r$ odd  & $3$& This paper\\
	 \hline
	\end{tabular}
	\caption{List of exponents yielding $4$ differentially uniform permutations over $\F_{2^n}$ with $n=2t$ up to inversion and cyclotomic equivalence}
	\label{t:4uni}
\end{table}

Our approach in this paper is new and uses as the key tool the \emph{modular add-with-carry approach} that was first formally introduced by Hollmann and Xiang \cite{FFA:HX}. 

\begin{theorem} [\cite{FFA:HX}, Theorem 13]\label{thm:xiang_orig}
	Let $a,s \in \{1,\dots,2^n-2\}$ and $l\in \N$. We denote by $a=(a_{n-1},\dots,a_0)$ and $s=(s_{n-1},\dots,s_0)$ the binary expansions of $a$ and $s$. Let $l=\sum_{j}t_j2^j$ with $t_j \in \Z$. Further, let $t_+=\sum_{j,t_j>0}t_j$ and $t_-=\sum_{j,t_j<0}t_j$.
	The following are equivalent:
	\begin{enumerate}[label=(\alph*)]
		\item $s\equiv l\cdot a \pmod{2^n-1}$ 
		\item There exists a sequence $c=(c_{n-1},\dots,c_{0})$ with $c_i \in \{t_-,t_-+1,\dots,t_+-1\}$ (called the \emph{carry sequence}) such that 
	\begin{equation} \label{eq:fund_orig}
		2c_i-c_{i-1}+s_i=\sum_j t_ja_{i-j}
	\end{equation}
	holds for all $i$. Here, the indices are seen as elements in $\Z_{n}$. 
	\end{enumerate}
	The carry sequence in (b) is unique.
\end{theorem}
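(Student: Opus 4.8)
The plan is to prove the two implications separately and then settle uniqueness. Throughout I write $b_i=\sum_j t_j a_{i-j}$ (indices in $\Z_n$) for the integer ``digit values'' produced by the multiplication, and I record the elementary bound $t_-\le b_i\le t_+$, which is immediate from $a_{i-j}\in\{0,1\}$. For the direction (b) $\Rightarrow$ (a) I would simply expand. Multiplying a residue by $2$ modulo $2^n-1$ is a cyclic shift of its binary digits, since $2^n\equiv 1$; concretely $2^j a\equiv \sum_i a_{i-j}2^i \pmod{2^n-1}$. Hence $l\cdot a=\sum_j t_j 2^j a\equiv \sum_i b_i 2^i \pmod{2^n-1}$. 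Substituting the defining relation \eqref{eq:fund_orig}, namely $b_i=2c_i-c_{i-1}+s_i$, and using $\sum_i c_i 2^{i+1}\equiv \sum_i c_{i-1}2^i \pmod{2^n-1}$ (again the shift), the two carry contributions telescope and cancel, leaving $l\cdot a\equiv \sum_i s_i 2^i=s$. This direction is purely formal and I expect no difficulty.

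The implication (a) $\Rightarrow$ (b) is the substantive one, and the cyclic (wrap-around) nature of the carries is where I expect the main obstacle. Over the integers the carry recurrence is solved greedily left to right by $c_i=\lfloor (b_i+c_{i-1})/2\rfloor$ and $s_i=(b_i+c_{i-1})\bmod 2$; the difficulty modulo $2^n-1$ is that the carry leaving position $n-1$ must feed back consistently into position $0$. My first step would be to show that the integer interval $\{t_-,\dots,t_+-1\}$ is invariant under one step of this recurrence: if $c_{i-1}$ lies in it, then from $t_-\le b_i\le t_+$ and $s_i\in\{0,1\}$ one gets $2c_i=b_i-s_i+c_{i-1}\in[2t_--1,2t_+-1]$, so the integer $c_i$ again lies in $\{t_-,\dots,t_+-1\}$. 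The cycle map $\varphi$ sending the incoming carry $c_{n-1}$ to the outgoing carry after traversing all $n$ positions is non-decreasing (the floor recurrence is monotone in $c_{i-1}$) and maps this finite interval to itself, hence has a fixed point; a fixed point is precisely a closed, consistent carry sequence, and the $s_i$ it generates satisfy $\sum_i s_i 2^i\equiv l\cdot a\pmod{2^n-1}$ by the already-proved converse. Since both this generated value and the given $s$ are representatives in $\{1,\dots,2^n-2\}$ of the same residue class (the degenerate all-zero and all-one strings are excluded because $s$ exists in range, forcing $l\cdot a\not\equiv 0$), they coincide, so the constructed sequence is a carry sequence for the given $s$ with entries in the required range.

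For uniqueness I would argue directly: if $c$ and $c'$ are two carry sequences for the same $s$, then subtracting the defining equations gives $2(c_i-c_i')=c_{i-1}-c_{i-1}'$ for all $i$. Writing $d_i=c_i-c_i'$, this reads $d_{i-1}=2d_i$, and iterating once around the cycle yields $d_0=2^n d_0$, whence $(2^n-1)d_0=0$ and $d_i=0$ for all $i$. The one point that demands care in the existence step is reconciling the tight range $\{t_-,\dots,t_+-1\}$ with the slightly looser a priori bound one obtains from solving the recurrence in closed form; I would handle this by generating the $s_i$ through the recurrence itself (which keeps the carries inside the invariant interval) rather than feeding in the given digits, and then identifying the two candidate values of $s$ via the uniqueness of residue representatives.
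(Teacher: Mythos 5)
The paper does not actually prove this statement; it is imported verbatim from Hollmann and Xiang (\cite{FFA:HX}, Theorem 13), so there is no in-paper argument to compare yours against. Judged on its own, your proof is correct and complete. The direction (b)$\Rightarrow$(a) via the telescoping identity $\sum_i(2c_i-c_{i-1})2^i\equiv c_{n-1}(2^n-1)\equiv 0\pmod{2^n-1}$ is the routine computation one expects. For (a)$\Rightarrow$(b), all three of your ingredients check out: the bound $t_-\le\sum_j t_ja_{i-j}\le t_+$ does give invariance of the integer interval $\{t_-,\dots,t_+-1\}$ under one step of the floor recurrence (and the interval is nonempty since $l\ge 1$ forces $t_+\ge 1$); the once-around-the-cycle map is a composition of non-decreasing maps, and a monotone self-map of a finite chain has a fixed point (Knaster--Tarski, or the explicit maximum of $\{x:x\le\varphi(x)\}$); and the generated digit string, being an $n$-bit representation of a residue congruent to $l\cdot a\equiv s\not\equiv 0$, must coincide bit-by-bit with the given expansion of $s$ by uniqueness of $n$-bit representations. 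Your choice to generate the digits from the recurrence and only then identify them with $s$, rather than feeding the given digits in, is exactly the right way to dodge the parity obstruction. The uniqueness argument $d_{i-1}=2d_i\Rightarrow(2^n-1)d_0=0$ is also correct, and in fact establishes uniqueness among all integer-valued carry sequences, not merely those in the prescribed range. One small presentational point: when you invoke the already-proved converse to conclude $\sum_i s_i'2^i\equiv l\cdot a$, note explicitly that the telescoping identity holds for an arbitrary $\{0,1\}$-string, since at that stage you do not yet know the generated string lies in $\{1,\dots,2^n-2\}$.
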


\begin{remark}
	Note that the representation $l=\sum_{j}t_j2^j$ with integer coefficients $t_j$ in Theorem~\ref{thm:xiang_orig} is not unique. In fact, this is one of the strengths of this theorem since it makes it possible to choose a representation that has more structure than the (usual) binary representation. This makes a big difference especially for the Kasami exponents. Indeed, the $r$-th Kasami exponent $K_r$ can be written (as it is done usually) as $K_r = 2^{2r}-2^r+1$, i.e. with $t_{2r}=t_0=1$ and $t_r=-1$. This is certainly a much simpler representation than the binary representation that has $r+1$ ones. In the general case, it seems to be desirable to choose a representation such that both $t_+$ and $t_-$ have low absolute value so that the range of possible values for the carry sequence is small.
\end{remark}

The basic idea of finding the inverse of some value $l$ modulo $2^n-1$ is now quite simple: We use Theorem \ref{thm:xiang_orig} and set $s=1$. Then we try to find sequences $a$ and $c$ that satisfy Eq.~\eqref{eq:fund_orig}. While we apply the approach in this paper only to the Gold, Kasami and Bracken-Leander exponents, the idea can in principle be used for arbitrary values of $l$. However, the corresponding sequences $a$ and $c$ are highly dependent on the choice of $l$, so a general treatment seems to be impossible. Still, this approach gives a good framework to find inverses in $\Z_{2^n-1}$.

\section{The Gold exponents}
To ``warm up'' and to illustrate our method using Theorem~\ref{thm:xiang_orig}, we (re-)derive the inverses of the invertible Gold exponents $G_r=2^r+1$ in $\Z_{2^n-1}$. The inverses of the APN Gold exponents (i.e. with the condition $\gcd(r,n)=1$) are explicitly given in \cite{EC:Nyb}. Moreover, the algebraic degree of the inverses of all Gold exponents is known \cite[Theorem 3.7.]{FFA:KyuSud14}. However, the explicit binary expansion of the inverses of the non-APN Gold exponents has not been determined yet. In this section, we apply the add-with-carry approach to find the binary expansion of the inverses of all invertible Gold exponents.  

Applied to the Gold exponent, Theorem~\ref{thm:xiang_orig} yields the following.

\begin{theorem} \label{thm:xiang_gold}
	Let $a,s \in \{1,\dots,2^n-2\}$ and $G_r=2^r+1$ be the $r$-th Gold exponent. We denote by $a=(a_{n-1},\dots,a_0)$ and $s=(s_{n-1},\dots,s_0)$ the binary expansions of $a$ and $s$. 
	The following are equivalent:
	\begin{enumerate}[label=(\alph*)]
		\item $s\equiv G_r\cdot a \pmod{2^n-1}$ 
		\item There exists a carry sequence $c=(c_{n-1},\dots,c_{0})$ with $c_i \in \{0,1\}$ such that 
	\begin{equation} \label{eq:fund_gold}
		2c_i-c_{i-1}+s_i=a_{i-r}+a_i
	\end{equation}
	holds for all $i$. Here, the indices are seen as elements in $\Z_{n}$. 
	\end{enumerate}
	The carry sequence in (b) is unique.
\end{theorem}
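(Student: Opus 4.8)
The plan is to obtain Theorem~\ref{thm:xiang_gold} as a direct specialization of the general add-with-carry result, Theorem~\ref{thm:xiang_orig}. First I would fix the integer representation of the Gold exponent dictated by its definition, namely $G_r = 2^r + 1$, which corresponds to the coefficients $t_r = 1$ and $t_0 = 1$ with $t_j = 0$ for every other $j$. This is already the usual binary representation of $G_r$, and it is the natural choice here since it involves only two nonzero coefficients, keeping the carry range as small as possible (exactly the point emphasized in the remark following Theorem~\ref{thm:xiang_orig}).

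With this representation in hand, the next step is purely bookkeeping. Since both nonzero coefficients are positive, $t_+ = \sum_{j:\, t_j > 0} t_j = t_r + t_0 = 2$, while there are no negative coefficients, so $t_- = 0$. Theorem~\ref{thm:xiang_orig} then forces each carry $c_i$ to lie in $\{t_-, t_- + 1, \ldots, t_+ - 1\} = \{0,1\}$, which is precisely the range claimed in part (b). Simultaneously, the right-hand side of the fundamental equation~\eqref{eq:fund_orig} collapses to $\sum_j t_j a_{i-j} = t_r a_{i-r} + t_0 a_i = a_{i-r} + a_i$, recovering~\eqref{eq:fund_gold} verbatim.

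Substituting these computations into Theorem~\ref{thm:xiang_orig} yields at once the equivalence of (a) and (b) together with the uniqueness of the carry sequence, so no separate uniqueness argument is required. The only convention to keep track of is that the indices $i-r$ are read in $\Z_n$, but this is inherited directly from the general statement and is unambiguous since $G_r$ uses a single fixed exponent $r$. In short, I expect no genuine obstacle here: the entire content of the proof is the identification of the representation $t_r = t_0 = 1$ followed by the elementary evaluation of $t_+$ and $t_-$, which is exactly why this case serves well as a warm-up for the more delicate Kasami and Bracken--Leander computations, where the presence of a negative coefficient makes the carry analysis substantially harder.
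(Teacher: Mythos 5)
Your proposal is correct and matches the paper exactly: the paper states Theorem~\ref{thm:xiang_gold} as an immediate specialization of Theorem~\ref{thm:xiang_orig} with $t_r=t_0=1$, giving $t_+=2$, $t_-=0$, hence the carry range $\{0,1\}$ and the right-hand side $a_{i-r}+a_i$. Nothing further is needed.
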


The following lemma characterizes all invertible Gold exponents. 

\begin{lemma}[{e.g. \cite[Lemma 11.1.]{ff_mceliece}}]\label{lem:goldinv}
	Let $r$ and $n$ be positive integers. The Gold exponent $G_r=2^r+1$ is invertible in $\Z_{2^n-1}$ if and only if $\frac{n}{\gcd(n,r)}$ is odd.
\end{lemma}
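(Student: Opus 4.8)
The plan is to rephrase invertibility as a coprimality condition and then control it through the multiplicative order of $2$. Since $G_r$ is a unit in $\Z_{2^n-1}$ if and only if $\gcd(2^r+1, 2^n-1) = 1$, the lemma is equivalent to the assertion that this gcd equals $1$ precisely when $n/\gcd(n,r)$ is odd. Writing $\nu_2$ for the $2$-adic valuation, I would first record the elementary reformulation that $n/\gcd(n,r)$ is odd iff $\nu_2(n) = \min(\nu_2(n),\nu_2(r))$ iff $\nu_2(n) \le \nu_2(r)$, so that the whole statement reduces to a comparison of $\nu_2(n)$ and $\nu_2(r)$.

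For the direction establishing coprimality, I would argue by contraposition. Suppose a prime $p$ divides both $2^r+1$ and $2^n-1$, and let $e$ be the multiplicative order of $2$ modulo $p$. From $p \mid 2^n-1$ we get $e \mid n$. From $2^r \equiv -1 \pmod p$ we get $2^{2r} \equiv 1$ but $2^r \not\equiv 1$, so $e \mid 2r$ while $e \nmid r$; a short valuation check then pins down $\nu_2(e) = \nu_2(r)+1$. Combined with $e \mid n$ this forces $\nu_2(n) \ge \nu_2(r)+1 > \nu_2(r)$, i.e.\ $n/\gcd(n,r)$ is even. Hence if $n/\gcd(n,r)$ is odd no such prime $p$ exists and the gcd is $1$.

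For the converse I would exhibit an explicit nontrivial common divisor. Setting $d=\gcd(n,r)$ and assuming $n/d$ even (equivalently $\nu_2(n) > \nu_2(r)$), I claim $2^d+1$ divides both $2^r+1$ and $2^n-1$. Since $2^d \equiv -1 \pmod{2^d+1}$, we have $2^r \equiv (-1)^{r/d}$ and $2^n \equiv (-1)^{n/d} \pmod{2^d+1}$; because $\nu_2(d)=\nu_2(r)$ under the assumption, the quotient $r/d$ is odd, giving $2^r+1 \equiv 0$, while $n/d$ is even, giving $2^n-1 \equiv 0$. Thus $2^d+1 \ge 3$ divides the gcd, so $G_r$ is not invertible.

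The main obstacle, and the only place where care is genuinely needed, is the $2$-adic bookkeeping: verifying that $e \mid 2r$ together with $e \nmid r$ determines $\nu_2(e)$ \emph{exactly} rather than merely bounding it, and checking that the parities of $r/d$ and $n/d$ come out as claimed once $\nu_2(n) > \nu_2(r)$. Everything else is routine manipulation of multiplicative orders and the identity $2^d \equiv -1 \pmod{2^d+1}$.
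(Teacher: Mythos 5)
Your proof is correct. Note that the paper does not actually prove this lemma at all --- it is quoted from the literature (Lemma 11.1 of the cited reference on the McEliece theorem) and used as a black box, so there is no in-paper argument to compare against. Your argument is the standard self-contained one and all the delicate points check out: the reduction of ``$n/\gcd(n,r)$ odd'' to $\nu_2(n)\le\nu_2(r)$ is right; in the contrapositive direction, $e\mid 2r$ forces the odd part of $e$ to divide the odd part of $r$, so $e\nmid r$ can only fail on the $2$-part, pinning $\nu_2(e)=\nu_2(r)+1$ exactly as you claim (and $p$ is automatically odd since it divides $2^n-1$, so $-1\not\equiv 1$); and in the converse direction, $\nu_2(n)>\nu_2(r)$ gives $\nu_2(d)=\nu_2(r)$, hence $r/d$ odd and $n/d$ even, so $2^d+1\ge 3$ divides both $2^r+1$ and $2^n-1$. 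Your argument in fact yields slightly more than the lemma states, namely the explicit nontrivial common divisor $2^{\gcd(n,r)}+1$ in the non-invertible case.
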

\subsection{The APN Gold exponents}
We first deal with the APN Gold exponents $G_r=2^r+1$ over $\F_{2^n}$ with $\gcd(r,n)=1$. We will use some notation from~\cite{stickel}, where the modular add-with-carry approach was used to find the Walsh support of the Kasami functions. In particular, we will use the notion of $r$-ordered sequences.


Since $\gcd(r,n)=1$ we can reorder the sequences $a$ and $c$ in Theorem~\ref{thm:xiang_gold} in the following way:
\begin{equation*}
	a_0,a_{-r},a_{-2r},\dots,a_{-(n-1)r} \text{ and }c_0,c_{-r},c_{-2r},\dots,c_{-(n-1)r}.
\end{equation*}
Here, we view again the indices as elements in $\Z_n$. This ordering is technically a decimation of the sequence by $-r$. Since we will be using this ordering a lot, we will call it the \emph{$r$-ordering of a sequence} and also denote these sequences by 
\begin{equation*}
	a_0,a_1,a_{2},\dots,a_{n-1} \text{ and }c_0,c_{1},c_{2},\dots,c_{n-1},
\end{equation*}
where we will always make sure to specify whether we use the regular ordering or the $r$-ordering.

By Lemma~\ref{lem:goldinv}, $G_r$ is invertible if and only if $n$ is odd. We denote by $e$ the least positive residue of the inverse of $r$ modulo $n$. 
Using $r$-ordered sequences, the key equation in Theorem \ref{thm:xiang_gold} takes on the following simpler form:
\begin{theorem} \label{thm:gcd1_gold}
	Let $n \in \N$, $a\in \{1,\dots,2^n-2\}$ and $G_r$ be the $r$-th Gold exponent with $\gcd(r,n)=1$. Let $e$ be the least positive residue of the inverse of $r$ modulo $n$ and $(a_{0},\dots,a_{n-1})$ be the $r$-ordered sequence of the binary representation of $a$, i.e. $a\equiv \sum_{i=0}^{n-1}a_i2^{-ir} \pmod{2^n-1}$. 	The following are equivalent:
	\begin{enumerate}[label=(\alph*)]
		\item $a$ is the inverse of $G_r$ modulo $2^n-1$.
			\item There exists an $r$-ordered carry sequence $c=(c_0,c_1,\dots,c_{n-1})$ with $c_i \in \{0,1\}$  such that 
	\begin{align} 
		2c_0-c_{e}+1&=a_{1}+a_{0} \label{eq:gcd1_1_gold} \\
		2c_i-c_{i+e}&=a_{i+1}+a_{i} \label{eq:gcd1_2_gold}
	\end{align}
		holds for all $i\in \Z_{n}$, $i \neq 0$. 
	\end{enumerate}
	The carry sequence in (b) is unique.
\end{theorem}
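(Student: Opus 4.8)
The plan is to obtain Theorem~\ref{thm:gcd1_gold} as a direct specialization and reindexing of Theorem~\ref{thm:xiang_gold}. Since we are looking for the inverse of $G_r$, we set $s=1$, so that statement (a) of Theorem~\ref{thm:xiang_gold} reads $1 \equiv G_r \cdot a \pmod{2^n-1}$, which is precisely statement (a) of Theorem~\ref{thm:gcd1_gold}. In the regular ordering the binary expansion of $s=1$ has $s_0=1$ and $s_i=0$ for all $i \neq 0$, so the key equation~\eqref{eq:fund_gold} becomes $2c_i-c_{i-1}+[i=0]=a_{i-r}+a_i$, where $[i=0]$ equals $1$ if $i\equiv 0 \pmod n$ and $0$ otherwise. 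It then remains only to translate this single family of equations into the $r$-ordered indexing.

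First I would fix the substitution defining the $r$-ordering. Because $\gcd(r,n)=1$, the map $j \mapsto -jr \pmod n$ is a bijection on $\Z_n$, and the $r$-ordered sequences are by definition $a_j := a_{-jr}$ and $c_j := c_{-jr}$, with the regular-ordering entries on the right. I would then evaluate~\eqref{eq:fund_gold} at the regular index $i=-jr$ and rewrite each term in the new indexing. The three conversions to check are: $a_i = a_j$ and $c_i = c_j$ immediately; $a_{i-r} = a_{-jr-r} = a_{-(j+1)r} = a_{j+1}$, since subtracting $r$ in the regular ordering advances the $r$-ordered index by one; and, most importantly, the carry term $c_{i-1}$. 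Here $i-1 = -jr-1$, and writing $-jr-1 \equiv -(j+e)r \pmod n$ forces $e r \equiv 1 \pmod n$, i.e. $e = r^{-1} \bmod n$ as in the statement, whence $c_{i-1} = c_{j+e}$. Finally $s_i = [i=0] = [{-jr} \equiv 0 \pmod n] = [j=0]$, again using $\gcd(r,n)=1$.

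Collecting these substitutions, \eqref{eq:fund_gold} evaluated at $i=-jr$ becomes $2c_j - c_{j+e} + [j=0] = a_{j+1} + a_j$, which splits into~\eqref{eq:gcd1_1_gold} for $j=0$ and~\eqref{eq:gcd1_2_gold} for $j \neq 0$. Since $j \mapsto -jr$ is a bijection on $\Z_n$, ranging over all regular indices $i$ is equivalent to ranging over all $r$-ordered indices $j$, so the two systems of equations are logically equivalent; the range $c_i \in \{0,1\}$ is preserved under relabeling. The uniqueness claim transfers verbatim: the carry sequence in Theorem~\ref{thm:xiang_gold} is unique, and the $r$-ordering is merely a permutation of its entries, so the $r$-ordered carry sequence is unique as well.

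I expect the only delicate point to be the computation of the shift in the carry term, namely verifying that a shift of $-1$ in the regular ordering corresponds exactly to a shift of $+e$ in the $r$-ordering with $e = r^{-1} \bmod n$; this is where the hypothesis $\gcd(r,n)=1$ is genuinely used, both to invert $r$ modulo $n$ and to guarantee that the reindexing is a bijection. Everything else is routine bookkeeping once the bijection $j \mapsto -jr$ is fixed.
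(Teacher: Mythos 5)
Your proposal is correct and follows essentially the same route the paper takes: the paper states this theorem without a separate proof precisely because it is the specialization $s=1$ of Theorem~\ref{thm:xiang_gold} combined with the reindexing $j\mapsto -jr$, and the one nontrivial computation you identify (that the predecessor shift $-1$ in the regular ordering becomes $+e$ in the $r$-ordering because $er\equiv 1\pmod n$) is exactly the calculation the paper carries out explicitly in the proof of the more general Theorem~\ref{thm:gcd>1_gold}. Nothing is missing.
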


Now, we can use Theorem~\ref{thm:gcd1_gold} to give a simple alternative proof for the inverse of the APN Gold exponents. As pointed out earlier, the idea is to guess the structure of the carry sequence from examples for low $n$ and then compute the inverse from the carry sequence. We will make a detailed example to give an intuition for this process. Note that the case of APN Gold functions is easier than other cases (especially the Kasami cases in later section), but the approach will always remain the same. 

\begin{example}
	Let $n=7$, $r=3$ and consider the invertible APN Gold exponent $G_3 = 2^3+1=9$. We have $3\cdot 5 \equiv 1 \pmod 7$, so $e = 5$. The inverse of $9$ modulo $2^7-1$ is $113 = 2^6+2^5+2^4+2^0$, and the binary sequence of $113$ in regular ordering is $(a_6,a_5,\dots,a_0) = (1,1,1,0,0,0,1)$ and in $r$-ordering $(a_0,\dots,a_6)=(1,1,0,1,0,1,0)$. We have $a_1=a_0=1$ so by Eq.~\eqref{eq:gcd1_1_gold} necessarily $c_0 = c_5 = 1$. If $i \neq 0$ we have $a_i+a_{i+1} = 1$ and thus by Eq.~\eqref{eq:gcd1_2_gold} $c_i = c_{i+5} = 1$. We conclude that the carry sequence consists exclusively of ones. 
\end{example}
	From this example (and possibly other examples for low $n$) we guess that the carry sequence always consists only of ones. Let us now consider the Eq.s~\eqref{eq:gcd1_1_gold} and~\eqref{eq:gcd1_2_gold} for $c_0=c_1=\dots=c_{n-1}=1$ and arbitrary $n$. It necessarily yields $a_1 = a_0 = 1$ and $a_{i+1} + a_{i} = 1$. From this, we immediately conclude that the $r$-ordered sequence of the inverse $G_r^{-1}$ is $(a_0,\dots,a_{n-1})=(1,1,0,1,0,1,\dots,0,1,0)$. Since both Eq.s~\eqref{eq:gcd1_1_gold} and~\eqref{eq:gcd1_2_gold} are satisfied, this must be the $r$-ordered sequence of the inverse of the Gold exponent. We have thus proven the following:
\begin{proposition}[{\cite[Proposition 5]{EC:Nyb}}] \label{prop:goldgcd1}
	Let $G_r=2^r+1$ with $\gcd(r,n)=1$ and $n$ odd. Then $G_r$ is invertible in  $\Z_{2^n-1}$ and the least positive residue of its inverse is 
	\begin{equation*}
		G_r^{-1}=\sum_{i=0}^{\frac{n-1}{2}}2^{2ir}. 
	\end{equation*}
	In particular, $\wt(G_r^{-1})=\frac{n+1}{2}$, so the algebraic degree of $x \mapsto x^{G_r^{-1}}$ over $\F_{2^n}$ is $\frac{n+1}{2}$.
\end{proposition}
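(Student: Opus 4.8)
The plan is to apply the $r$-ordered reformulation in Theorem~\ref{thm:gcd1_gold} directly, with $a=G_r^{-1}$ the least positive residue of the inverse we are trying to pin down. First I would record invertibility: since $\gcd(r,n)=1$ and $n$ is odd we have $n/\gcd(n,r)=n$ odd, so Lemma~\ref{lem:goldinv} guarantees $G_r$ is invertible in $\Z_{2^n-1}$; hence there is a unique $r$-ordered carry sequence $c$ satisfying Eq.~\eqref{eq:gcd1_1_gold} and Eq.~\eqref{eq:gcd1_2_gold} for the residue $a$.

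The core of the argument is a guess-and-verify on the carry sequence. Motivated by small cases (such as $n=7$ in the preceding example), I would conjecture that the $r$-ordered carry sequence is identically $1$, i.e. $c_0=c_1=\cdots=c_{n-1}=1$. Substituting this into Eq.~\eqref{eq:gcd1_1_gold} gives $2-1+1=a_1+a_0$, forcing $a_0=a_1=1$, while substituting into Eq.~\eqref{eq:gcd1_2_gold} gives $2-1=a_{i+1}+a_i$ for every $i\neq 0$, forcing $a_{i+1}+a_i=1$. Since the only $\{0,1\}$-solution of these recurrences with $a_0=a_1=1$ is the alternating pattern, I obtain the $r$-ordered sequence $(a_0,\dots,a_{n-1})=(1,1,0,1,0,\dots,1,0)$; here $n$ odd is exactly what makes the pattern close up consistently around the cycle $\Z_n$ (the equation at $i=n-1$ reads $a_{n-1}+a_0=1$ and is satisfied because $n-1$ is even, so $a_{n-1}=0$). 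Because this pair $(a,c)$ satisfies part (b) of Theorem~\ref{thm:gcd1_gold}, the equivalence immediately yields that $a$ is the inverse of $G_r$; no separate uniqueness check is needed beyond the one the theorem already supplies.

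It then remains to convert this $r$-ordered description into the claimed closed form and to read off the weight. I would unwind the $r$-ordering via $a\equiv \sum_{i=0}^{n-1}a_i2^{-ir}\pmod{2^n-1}$: the ones sit at $r$-index $0$ and at every odd $r$-index, and reindexing $i\mapsto n-2j$ (using $n$ odd, so that $\{n-2j : 1\le j\le (n-1)/2\}$ is exactly the set of odd residues in $\{1,\dots,n-2\}$) turns $\sum 2^{-ir}$ into $\sum_{j=0}^{(n-1)/2}2^{2jr}$, as asserted. Counting the ones, namely index $0$ together with the $(n-1)/2$ odd indices, gives $\wt(G_r^{-1})=\frac{n+1}{2}$, which equals the algebraic degree of $x\mapsto x^{G_r^{-1}}$.

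The step I expect to be the only real obstacle is the index bookkeeping in this final translation: one must check that the decimation by $-r$ underlying the $r$-ordering is a genuine bijection of $\Z_n$ (this is where $\gcd(r,n)=1$ enters) and that the reindexing correctly matches the alternating $r$-ordered pattern with the arithmetic progression of exponents $0,2r,4r,\dots,(n-1)r$ modulo $n$. The guess-and-verify on the carry sequence, by contrast, is essentially forced once the all-ones pattern is proposed, so the argument is short once the right ordering is in place.
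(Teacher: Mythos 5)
Your proposal is correct and follows essentially the same route as the paper: guess the all-ones carry sequence, let Eq.~\eqref{eq:gcd1_1_gold} and Eq.~\eqref{eq:gcd1_2_gold} force the alternating $r$-ordered pattern $(1,1,0,1,0,\dots,1,0)$, invoke the equivalence in Theorem~\ref{thm:gcd1_gold}, and then translate back via $2^{-ir}\equiv 2^{(n-i)r}\pmod{2^n-1}$ to obtain $\sum_{i=0}^{(n-1)/2}2^{2ir}$ and the weight $\frac{n+1}{2}$. The only difference is that you spell out the cyclic-closure check at $i=n-1$ and the reindexing bookkeeping slightly more explicitly than the paper does.
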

\begin{proof}
	By the considerations above, the $r$-ordered sequence of the inverse $G_r^{-1}$ is $(a_0,\dots,a_{n-1})=(1,1,0,1,0,1,\dots,0,1,0)$. We conclude
	\begin{equation*} 
		G_r^{-1}\equiv 1+\sum_{\substack{i\in \{0,\dots,n-1\} \\i \text{ odd}}}2^{-ir} \equiv \sum_{\substack{i\in \{0,\dots,n-1\} \\i \text{ even}}}2^{ir}\equiv \sum_{i=0}^{\frac{n-1}{2}}2^{2ir} \pmod{2^n-1}.
	\end{equation*}
	\qed
\end{proof}

The main takeaway from the example is that the carry sequence has a simpler structure than the sequence $(a_0,\dots,a_{n-1})$ of the inverse. This observation will also hold for all other exponents considered in this paper. Indeed, while it is still possible to discern the structure of the APN Gold exponents with relative ease without looking at the carry sequence, this will be close to impossible in the case of the Kasami exponents.

\subsection{The non-APN Gold exponents}
We now deal with the more general case of Gold exponents $G_r$ with $\gcd(r,n)>1$.

Since $\gcd(n,r)>1$, we cannot use the $r$-ordering of sequences that we used in the previous proposition. We expand the concept in a natural way.

\begin{definition}\label{def:rmatrix}
	Let $a = (a_{n-1},\dots,a_{0})$ be a sequence of integers and $r$ be a positive integer. Set $d=\gcd(n,r)$. We define the associated $(d \times \frac{n}{d})$-matrix $M_{a,r}$ by
	\begin{equation*}
		M_{a,r}=\begin{pmatrix}
			a_0&a_{-r}&a_{-2r}&\hdots&a_{-(\frac{n}{d}-1)r} \\
			a_1&a_{1-r}&a_{1-2r}&\hdots&a_{1-(\frac{n}{d}-1)r} \\
			\vdots& & & &\vdots \\
			a_{d-1}&a_{d-1-r} &a_{d-1-2r}& \hdots & a_{d-1-(\frac{n}{d}-1)r}
		\end{pmatrix},
	\end{equation*}
	where the indices are seen as elements in $\Z_n$. We call $M_{a,r}$ the \emph{$r$-matrix of $a$}. If $a$ is a binary sequence, then we call $M_{a,r}$ also the $r$-matrix of the corresponding element in $\Z_{2^n-1}$ or $\{0,1,\dots,2^n-2\}$.  
\end{definition}

Since the $r$-matrices are constructed from sequences, we use the slightly unusual convention of indexing from $0$, i.e. the first row/column will be called row/column $0$. With this convention, the $r$-ordered sequences considered in the previous section are just a special case of $r$-matrices with only one row. Again in accordance to the notation used earlier, we denote by $e$ the least positive residue of the inverse of $\frac{r}{\gcd(n,r)}$ modulo $\frac{n}{\gcd(n,r)}$. 

We now use $r$-matrices to rephrase Theorem~\ref{thm:xiang_gold}.

\begin{theorem} \label{thm:gcd>1_gold}
	Let $a \in \{1,\dots,2^n-2\}$, $n \in N$ and $G_r$ be the $r$-th Gold exponent with $\gcd(r,n)=d$ and $e$ be the least positive residue of the inverse of $\frac{r}{d}$ modulo $\frac{n}{d}$. Moreover, let \begin{equation*}
		M_{a,r}=\begin{pmatrix}
			a_{0,0}&a_{0,1}&a_{0,2}&\hdots&a_{0,\frac{n}{d}-1} \\
			a_{1,0}&a_{1,1}&a_{1,2}&\hdots&a_{1,\frac{n}{d}-1} \\
			\vdots& & & &\vdots \\
			a_{d-1,0}&a_{d-1,1} &a_{d-1,2}& \hdots & a_{d-1,\frac{n}{d}-1}
		\end{pmatrix}
	\end{equation*} be the $r$-matrix of $a$, i.e. $a\equiv \sum_{i=0}^{d-1}\sum_{j=0}^{\frac{n}{d}-1}a_{i,j}2^{i-jr} \pmod{2^n-1}$. 	The following are equivalent:
	\begin{enumerate}[label=(\alph*)]
		\item $a$ is the inverse of $G_r$ modulo $2^n-1$.
			\item There exists an $r$-matrix for the carry sequence $c$ of the form 
			\begin{equation*}
		M_{c,r}=\begin{pmatrix}
			c_{0,0}&c_{0,1}&c_{0,2}&\hdots&c_{0,\frac{n}{d}-1} \\
			c_{1,0}&c_{1,1}&c_{1,2}&\hdots&c_{1,\frac{n}{d}-1} \\
			\vdots& & & &\vdots \\
			c_{d-1,0}&c_{d-1,1} &c_{d-1,2}& \hdots & c_{d-1,\frac{n}{d}-1}
		\end{pmatrix}
	\end{equation*} 
			with $c_{i,j} \in \{0,1\}$  such that the following equations hold:
	\begin{align} 
		2c_{0,0}-c_{d-1,e}+1&=a_{0,1}+a_{0,0} \label{eq:gcds_1_gold} \\
		2c_{0,j}-c_{d-1,j+e}&=a_{0,j+1}+a_{0,j}  \text{ for all }j\in \{1,\dots,\frac{n}{d}-1\} \label{eq:gcds_2_gold}\\
		2c_{i,j}-c_{i-1,j}&=a_{i,j+1}+a_{i,j} \text{ for all }i \in \{1,\dots,d-1\}, j \in \{0,\dots,\frac{n}{d}-1\}. \label{eq:gcds_3_gold}
	\end{align}
	\end{enumerate}
	The carry sequence (and thus its associated $r$-matrix) in (b) is unique.
\end{theorem}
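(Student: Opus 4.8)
The plan is to specialize Theorem~\ref{thm:xiang_gold} to $s=1$ and then reorganize the resulting $n$ scalar equations into the block structure dictated by the $r$-matrix. Setting $s_0=1$ and $s_p=0$ for $p\neq 0$, statement (a) becomes equivalent to the existence of a unique carry sequence $c\in\{0,1\}^n$ with
\[
	2c_p-c_{p-1}+s_p=a_{p-r}+a_p \qquad \text{for all } p\in\Z_n.
\]
The whole proof then consists of showing that, under the reindexing $p=i-jr \bmod n$, this single family of equations is exactly the three families \eqref{eq:gcds_1_gold}--\eqref{eq:gcds_3_gold}, so that the result is a direct generalization of Theorem~\ref{thm:gcd1_gold} (which is the special case $d=1$).

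First I would establish that the map
\[
	\varphi\colon \{0,\dots,d-1\}\times\{0,\dots,\tfrac{n}{d}-1\}\to\Z_n,\qquad (i,j)\mapsto i-jr \bmod n
\]
is a bijection. Writing $r=d\rho$ and $n=d\nu$ with $\gcd(\rho,\nu)=1$, the subgroup of $\Z_n$ generated by $r$ is the set of multiples of $d$, which $\{-jr : 0\le j<\nu\}$ enumerates exactly once; adding the coset representatives $i\in\{0,\dots,d-1\}$ then covers all of $\Z_n$ bijectively. Hence $a_{i,j}:=a_{\varphi(i,j)}$ is well defined, it matches the $r$-matrix entries in the statement, and the same applies to the carry entries $c_{i,j}$.

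The core of the argument is to track how the three terms $a_p$, $a_{p-r}$ and $c_{p-1}$ transform under $\varphi$. Setting $p=\varphi(i,j)$, the term $a_p$ is $a_{i,j}$ by definition, while $a_{p-r}=a_{i-(j+1)r}$ equals $a_{i,j+1}$ with the column index read modulo $\frac{n}{d}$ (when $j=\frac{n}{d}-1$ one has $(\tfrac{n}{d})r\equiv 0$, so $a_{p-r}=a_{i,0}$, a pure column wrap with no row change). This produces the right-hand sides of all three families at once. For the carry term I distinguish two cases. If $i\ge 1$ then $p-1=(i-1)-jr=\varphi(i-1,j)$, giving $c_{p-1}=c_{i-1,j}$ and hence \eqref{eq:gcds_3_gold}. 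If $i=0$ then $p-1=-1-jr$, and I must rewrite $-1$ as $(d-1)-d$ and push the extra $-d$ into the column index: solving $(d-1)+(j-j')r\equiv -1 \pmod n$ reduces, after dividing by $d$, to $(j'-j)\rho\equiv 1 \pmod{\nu}$, i.e.\ $j'\equiv j+e \pmod{\frac{n}{d}}$ by the very definition of $e=(r/d)^{-1}\bmod(n/d)$. Thus $c_{p-1}=c_{d-1,\,j+e}$, yielding \eqref{eq:gcds_1_gold} and \eqref{eq:gcds_2_gold}. Finally $s_p=1$ precisely when $\varphi(i,j)=0$, which by bijectivity of $\varphi$ happens only for $(i,j)=(0,0)$, explaining why the $+1$ appears solely in \eqref{eq:gcds_1_gold}.

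This establishes that (a) and (b) are equivalent, and uniqueness of the carry $r$-matrix is inherited directly from the uniqueness of the carry sequence in Theorem~\ref{thm:xiang_gold}. The only genuinely delicate point is the $i=0$ row-wrap computation: correctly identifying that decrementing the position by $1$ both moves from row $0$ to row $d-1$ and shifts the column by $e$ is exactly where the inverse $e$ of $r/d$ modulo $n/d$ enters, and getting this index arithmetic right (including its interaction with the column wrap when $j+e\ge \frac{n}{d}$) is the main thing to verify carefully; everything else is routine bookkeeping parallel to the proof of Theorem~\ref{thm:gcd1_gold}.
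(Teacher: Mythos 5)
Your proposal is correct and follows essentially the same route as the paper: specialize Theorem~\ref{thm:xiang_gold} to $s=1$ and reindex via the $r$-matrix, with the only nontrivial step being the predecessor computation in the row $i=0$, where solving $(j'-j)\tfrac{r}{d}\equiv 1\pmod{\tfrac{n}{d}}$ introduces $e$ — exactly the calculation the paper performs. Your additional verification that $(i,j)\mapsto i-jr$ is a bijection onto $\Z_n$ is a detail the paper leaves implicit in the definition of the $r$-matrix, but it does not change the argument.
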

\begin{proof}
	The Theorem follows immediately from Theorem \ref{thm:xiang_gold} and the definition of the $r$-matrix. The predecessor of the values $c_{-k_1r}$ is determined as follows: Observe that $c_{-k_1r-1}=c_{\gcd(n,r)-1-k_2r}$ if and only if $-k_1r-1 \equiv \gcd(n,r)-1-k_2r \pmod n$, which is equivalent to $-(k_1-k_2)\frac{r}{\gcd(n,r)} \equiv 1 \pmod{\frac{n}{\gcd(n,r)}}$, so the predecessor of $c_{-k_1r}$ is $c_{\gcd(n,r)-1-k_2r}$ with $k_2=k_1+e$. 
	\qed
\end{proof}

With Theorem~\ref{thm:gcd>1_gold}, we can give the explicit binary representation of all inverses of Gold exponents.

\begin{proposition} \label{prop:gcds2_gold}
	Let $n\in \N$ and $G_r$ be the $r$-th Gold exponent with $\gcd(r,n)=d>1$ and $\frac{n}{d}$ odd. Let $e$ be the least positive residue of the inverse of $\frac{r}{d}$. Then $G_r^{-1}\equiv \sum_{i=0}^{d-1}\sum_{j=0}^{\frac{n}{d}-1}a_{i,j}2^{i-jr} \pmod{2^n-1}$ where the values $a_{i,j}$ are the entries of the $(d \times \frac{n}{d})$-matrix
	\begin{equation*}M_{a,r}=
		\begin{pmatrix}
			0 & 0 & 1 & 0 & 1 & 0 & \dots & 0 & 1\\
			0 & 0 & 1 & 0 & 1 & 0 & \dots & 0 & 1\\
			\vdots \\
			0 & 0 & 1 & 0 & 1 & 0 & \dots & 0 & 1\\
			1 & 1 & 0 & 1 & 0 & 1 & \dots & 1 & 0\\
		\end{pmatrix}.
	\end{equation*}
	In particular, $\wt(G_r^{-1})=\frac{n-d+2}{2}$.
\end{proposition}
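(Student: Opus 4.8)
The plan is to follow exactly the recipe used for the APN Gold exponents in Proposition~\ref{prop:goldgcd1}: propose an explicit carry matrix, check that together with the claimed matrix $M_{a,r}$ it satisfies the three families of equations~\eqref{eq:gcds_1_gold}--\eqref{eq:gcds_3_gold}, and then invoke the uniqueness part of Theorem~\ref{thm:gcd>1_gold} to conclude that the proposed $a$ is indeed $G_r^{-1}$. Writing $m=\frac{n}{d}$ (recall $m$ is odd), inspection of small examples suggests the candidate carry matrix $M_{c,r}$ with $c_{i,j}=1$ for all entries \emph{except} $c_{i,0}=0$ for $i\in\{0,\dots,d-2\}$; equivalently, every column of index $\geq 1$ is all-ones, and column $0$ is $(0,\dots,0,1)^{T}$. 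The remaining work is purely verification.

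First I would record the relevant consecutive sums $a_{i,j+1}+a_{i,j}$ (columns read mod $m$). For a top row $i\le d-2$ the pattern $(0,0,1,0,1,\dots,0,1)$ gives $a_{i,1}+a_{i,0}=0$ and $a_{i,j+1}+a_{i,j}=1$ for every $j\in\{1,\dots,m-1\}$; for the bottom row the pattern $(1,1,0,1,0,\dots,1,0)$ gives $a_{d-1,1}+a_{d-1,0}=2$ and $a_{d-1,j+1}+a_{d-1,j}=1$ for $j\in\{1,\dots,m-1\}$. Next I would verify the ``vertical'' equations~\eqref{eq:gcds_3_gold}, $2c_{i,j}-c_{i-1,j}=a_{i,j+1}+a_{i,j}$ for $i\in\{1,\dots,d-1\}$. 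Since $c_{i,j}\in\{0,1\}$, the only solutions are $(c_{i,j},c_{i-1,j})=(0,0),(1,1),(1,0)$ according as the right-hand side equals $0,1,2$; plugging in the sums above reproduces exactly the proposed carry matrix (in column $0$ the zero sums force $c_{0,0}=\dots=c_{d-2,0}=0$ and the value $2$ at the bottom forces $c_{d-1,0}=1$, while every column of index $\ge 1$ has all sums equal to $1$ and is hence forced to all-ones).

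The only genuinely delicate point is the top-row ``wrap-around'' equations~\eqref{eq:gcds_1_gold} and~\eqref{eq:gcds_2_gold}, which couple $c_{0,j}$ to $c_{d-1,j+e}$ with the column shifted by $e=\big(\tfrac{r}{d}\big)^{-1}\bmod m$. Here one might worry about whether $j+e$ lands on column $0$ of row $d-1$. The observation that dissolves this worry is that row $d-1$ of the carry matrix is \emph{identically} $1$ (including column $0$), so $c_{d-1,j+e}=1$ no matter what $j+e$ is modulo $m$; the verification therefore becomes uniform in $j$. Concretely, \eqref{eq:gcds_1_gold} reads $2\cdot 0-1+1=0=a_{0,1}+a_{0,0}$, and for $j\ge 1$ equation~\eqref{eq:gcds_2_gold} reads $2\cdot 1-1=1=a_{0,j+1}+a_{0,j}$, both of which hold. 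As all equations are satisfied and the carry matrix in Theorem~\ref{thm:gcd>1_gold} is unique, we obtain $a=G_r^{-1}$.

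Finally I would read off the weight by counting ones in $M_{a,r}$: each of the $d-1$ top rows contributes $\frac{m-1}{2}$ ones and the bottom row contributes $\frac{m+1}{2}$, so $\wt(G_r^{-1})=(d-1)\frac{m-1}{2}+\frac{m+1}{2}=\frac{n-d+2}{2}$. I expect the main obstacle to be not the algebra but the index bookkeeping — in particular justifying the $e$-shift relating row $0$ to row $d-1$, and confirming that the uniformly all-ones bottom row really makes the wrap-around equations hold independently of where $j+e$ falls modulo $m$.
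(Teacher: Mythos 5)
Your proposal is correct and matches the paper's proof essentially verbatim: the same carry matrix (column $0$ equal to $(0,\dots,0,1)^{T}$, all other columns all-ones), the same case-by-case verification of Eq.s~\eqref{eq:gcds_1_gold}--\eqref{eq:gcds_3_gold}, and the same count of ones for the weight. Your explicit remark that the all-ones bottom row makes the $e$-shifted wrap-around equations hold uniformly in $j$ is a point the paper leaves implicit, but it is the same argument.
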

\begin{proof}
	The $r$-matrix of the corresponding carry sequence is the $(d \times \frac{n}{d})$-matrix
		\begin{equation*}M_{c,r}=(c_{i,j})=
		\begin{pmatrix}
0 & 1 & 1 & 1 & \dots & 1 \\
0 & 1 & 1 & 1 & \dots & 1 \\
\vdots &  &  &  & & \vdots \\
0 & 1 & 1 & 1 & \dots & 1 \\
1 & 1 & 1 & 1 & \dots & 1 \\
		\end{pmatrix}.
	\end{equation*}	
	We now just have to verify Eq.s~\eqref{eq:gcds_1_gold} to~\eqref{eq:gcds_3_gold}. For Eq.~\eqref{eq:gcds_1_gold}, we get $2c_{0,0}-c_{d-1,e}+1=a_{0,1}+a_{0,0}=0$. For Eq.~\eqref{eq:gcds_2_gold}, we have $2c_{0,j}-c_{d-1,j+e}=a_{0,j+1}+a_{0,j}=1$ for all values of $j>0$. For Eq.~\eqref{eq:gcds_3_gold}, we have $2c_{i,j}-c_{i-1,j}=a_{i,j+1}+a_{i,j}=0$ if $j=0$ and $0<i <d-1$, $2c_{i,j}-c_{i-1,j}=a_{i,j+1}+a_{i,j}=2$ if $j=0$ and $i = d-1$ and $2c_{i,j}-c_{i-1,j}=a_{i,j+1}+a_{i,j}=1$ in all other possible cases. Thus, all equations are satisfied. 
	
	The value of $\wt(G_r^{-1})$ can be determined easily by counting the ones in the matrix $M_{a,r}$.
	\qed
	\end{proof}
	Recall that all invertible Gold exponents satisfy $\frac{n}{\gcd(r,n)}$ odd by Lemma~\ref{lem:goldinv}, so Propositions~\ref{prop:goldgcd1} and~\ref{prop:gcds2_gold} cover all invertible Gold exponents.
	
Note that the $\gcd(r,n)=1$ case can even be recovered as a special case from Proposition~\ref{prop:gcds2_gold}. Indeed, the last row of the $r$-matrices $M_{a,r}$ and $M_{c,r}$ are precisely the $r$-sequences we saw in the $\gcd(r,n)=1$ case in Proposition~\ref{prop:goldgcd1}.

\section{The Kasami exponents}
Applied to the Kasami exponent $K_r=2^{2r}-2^r+1$, Theorem~\ref{thm:xiang_orig} yields the following. 
\begin{theorem} \label{thm:xiang}
	Let $a,s \in \{1,\dots,2^n-2\}$ and $K_r$ be the $r$-th Kasami exponent. We denote by $a=(a_{n-1},\dots,a_0)$ and $s=(s_{n-1},\dots,s_0)$ the binary expansions of $a$ and $s$. 
	The following are equivalent:
	\begin{enumerate}[label=(\alph*)]
		\item $s\equiv K_r\cdot a \pmod{2^n-1}$ 
		\item There exists a carry sequence $c=(c_{n-1},\dots,c_{0})$ with $c_i \in \{-1,0,1\}$ such that 
	\begin{equation} \label{eq:fund}
		2c_i-c_{i-1}+s_i=a_{i-2r}-a_{i-r}+a_i
	\end{equation}
	holds for all $i$. Here, the indices are seen as elements in $\Z_{n}$. 
	\end{enumerate}
	The carry sequence in (b) is unique.
\end{theorem}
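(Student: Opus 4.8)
The plan is to apply Theorem~\ref{thm:xiang_orig} directly, using the natural three-term representation of the Kasami exponent rather than its (much longer) binary expansion. Writing $K_r = 2^{2r} - 2^r + 1 = \sum_j t_j 2^j$, I would read off the nonzero coefficients as $t_{2r} = 1$, $t_r = -1$, and $t_0 = 1$, with all remaining $t_j = 0$. This is precisely the representation highlighted in the remark following Theorem~\ref{thm:xiang_orig}, and the point of using it is that it keeps $t_+$ and $t_-$ small, which is exactly what controls the range of the carries.

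Next I would compute the two quantities that determine this range. Since the only positive coefficients are $t_{2r}$ and $t_0$, we get $t_+ = \sum_{j, t_j > 0} t_j = 1 + 1 = 2$, and since the only negative coefficient is $t_r$, we get $t_- = \sum_{j, t_j < 0} t_j = -1$. Theorem~\ref{thm:xiang_orig} then guarantees a carry sequence with entries in $\{t_-, \dots, t_+ - 1\} = \{-1, 0, 1\}$, matching the claimed range in part (b). Substituting the coefficients into the fundamental equation~\eqref{eq:fund_orig}, the right-hand side $\sum_j t_j a_{i-j}$ collapses to the three surviving terms $t_{2r} a_{i-2r} + t_r a_{i-r} + t_0 a_i = a_{i-2r} - a_{i-r} + a_i$, so~\eqref{eq:fund_orig} becomes exactly~\eqref{eq:fund}. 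The equivalence of (a) and (b), together with the uniqueness of the carry sequence, is then inherited verbatim from Theorem~\ref{thm:xiang_orig}.

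There is essentially no hard step here: the statement is a pure specialization, exactly parallel to the Gold case recorded in Theorem~\ref{thm:xiang_gold}. The only point requiring genuine care is the bookkeeping of signs — making sure that the single negative coefficient $t_r = -1$ is placed so that it both produces the $-a_{i-r}$ term on the right-hand side and contributes $-1$ (rather than $+1$) to $t_-$, thereby opening up the value $-1$ for the carries. I would expect this extra carry value $-1$, which is absent in the Gold setting where $c_i \in \{0,1\}$, to be the single feature of this theorem that makes the subsequent analysis of the Kasami inverses substantially more delicate than the Gold analysis.
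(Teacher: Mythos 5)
Your proposal is correct and matches the paper exactly: the paper presents Theorem~\ref{thm:xiang} as an immediate specialization of Theorem~\ref{thm:xiang_orig} using the representation $t_{2r}=1$, $t_r=-1$, $t_0=1$, which gives $t_+=2$, $t_-=-1$ and hence the carry range $\{-1,0,1\}$, just as you computed. Nothing further is needed.
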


We extend the definition of the weight of a sequence to the sum of all of its elements. For binary sequences, this corresponds exactly to its binary weight. In particular, this allows us to talk about the weight of the carry sequence. Using this convention, the following Lemma gives an additional condition on the carry sequence. 

\begin{lemma}[\cite{FFA:HX}, Lemma 5] \label{lem:carry}
	With the notation of Theorem \ref{thm:xiang}, we have the following:
	\begin{enumerate}[label=(\alph*)]
		\item $c_i+c_{i-r} \in \{-1,0,1\}$. In particular, $|\wt(c)| \leq \frac{n}{2}$.
		\item $\wt(c)+\wt(s)=\wt(a)$. In particular, for $s=1$ we have $\wt(c)=\wt(a)-1$.
	\end{enumerate}
\end{lemma}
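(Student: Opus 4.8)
The plan is to first dispatch part (b), which is an immediate consequence of summing the fundamental equation over the whole cycle, and then to feed it back as the crucial input for part (a). For part (b) I would sum Eq.~\eqref{eq:fund} over all $i \in \Z_n$. Since the indices are read cyclically, every shifted sum collapses to the corresponding weight: $\sum_i c_{i-1} = \sum_i c_i = \wt(c)$ and $\sum_i a_{i-2r} = \sum_i a_{i-r} = \sum_i a_i = \wt(a)$. The left-hand side then becomes $2\wt(c) - \wt(c) + \wt(s) = \wt(c) + \wt(s)$ and the right-hand side becomes $\wt(a) - \wt(a) + \wt(a) = \wt(a)$, giving $\wt(c) + \wt(s) = \wt(a)$; specialising to $s = 1$ yields $\wt(c) = \wt(a) - 1$.

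For part (a), the key manoeuvre is to add the fundamental equation at position $i$ to the one at position $i-r$. The two mixed contributions cancel in pairs: the $-a_{i-r}$ of the first equation is killed by the $+a_{i-r}$ of the second, and likewise $+a_{i-2r}$ cancels $-a_{i-2r}$, leaving
\[
	2(c_i + c_{i-r}) - (c_{i-1} + c_{i-1-r}) + (s_i + s_{i-r}) = a_i + a_{i-3r}.
\]
Writing $C_i := c_i + c_{i-r}$, $\sigma_i := s_i + s_{i-r}$, and $A_i := a_i + a_{i-3r}$, this is a recursion $2C_i - C_{i-1} + \sigma_i = A_i$ of exactly the same shape as Eq.~\eqref{eq:fund}, with $\sigma_i, A_i \in \{0,1,2\}$. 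A priori $C_i \in \{-2,\dots,2\}$, so the whole content of part (a) is to exclude the two extreme values $\pm 2$.

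The main obstacle, and the heart of the argument, is ruling out $C_i = \pm 2$, which I would do by an extremal telescoping argument closed off using part (b). Suppose $C_i = 2$ for some $i$. Solving the recursion gives $C_{i-1} = 4 - (A_i - \sigma_i) \ge 2$, since $A_i - \sigma_i \le 2$; together with the trivial bound $C_{i-1} \le 2$ this forces $C_{i-1} = 2$. Iterating backwards around the finite cycle $\Z_n$ shows $C_j = 2$ for every $j$, i.e. $c_j = 1$ for all $j$, so $\wt(c) = n$. But part (b) then gives $\wt(a) = n + \wt(s) > n$, which is impossible because $\wt(a) \le n$. The symmetric computation (using $A_i - \sigma_i \ge -2$) rules out $C_i = -2$: it would force every $c_j = -1$, hence $\wt(c) = -n$, and part (b) would then give $\wt(a) = \wt(s) - n < 0$ (as $\wt(s) < n$), which is absurd.

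Having established $C_i = c_i + c_{i-r} \in \{-1,0,1\}$, the final bound follows by summing once more: $\sum_{i \in \Z_n} C_i = 2\wt(c)$, and since each summand lies in $\{-1,0,1\}$ we get $|2\wt(c)| \le n$, that is $|\wt(c)| \le \frac{n}{2}$. The only genuinely delicate point is recognising that the pairwise cancellation in combining the $i$ and $i-r$ equations produces a self-similar recursion, after which the extremal argument plus part (b) is routine; the rest is bookkeeping.
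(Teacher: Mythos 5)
Your proof is correct. Note that the paper itself gives no proof of this lemma --- it is imported verbatim as Lemma~5 of \cite{FFA:HX} --- so there is no in-paper argument to compare against; your write-up is a valid self-contained derivation. Part (b) by summing Eq.~\eqref{eq:fund} over $i\in\Z_n$ is exactly the natural route, and for part (a) the key step of adding the instances of Eq.~\eqref{eq:fund} at $i$ and $i-r$ so that the $\pm a_{i-r}$ and $\pm a_{i-2r}$ terms cancel, yielding the self-similar recursion $2C_i-C_{i-1}+\sigma_i=A_i$ with $\sigma_i,A_i\in\{0,1,2\}$, is the same device used in the source. Your extremal argument closing the loop (if some $C_i=\pm 2$ then all $C_j=\pm 2$ around the cycle, forcing $\wt(c)=\pm n$ and contradicting part (b) via $1\le\wt(a)\le n-1$ and $1\le\wt(s)\le n-1$) is sound, and the final bound $|\wt(c)|\le n/2$ from $\sum_i C_i=2\wt(c)$ with each $C_i\in\{-1,0,1\}$ is immediate. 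No gaps.
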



The following Proposition shows when a Kasami exponent is invertible modulo $2^n-1$.

\begin{proposition}[\cite{FFA:KyuSud14}, Lemma 3.8] \label{prop:invexist}
	Let $n$ be a positive integer and $K_r=2^{2r}-2^r+1$ be the $r$-th Kasami exponent. $K_r$ is invertible modulo $2^n-1$ if and only if one of the following cases occurs:
	\begin{itemize}
		\item $\frac{n}{\gcd(r,n)}$ is odd,
		\item $\frac{n}{\gcd(r,n)}$ is even, $r$ is even and $\gcd(r,n)=\gcd(3r,n)$.
	\end{itemize}
\end{proposition}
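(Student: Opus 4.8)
The plan is to work directly with $\gcd(K_r,2^n-1)$, exploiting the factorization $K_r\cdot(2^r+1)=2^{3r}+1$, which in particular shows $K_r\mid 2^{3r}+1$. Since $K_r$ is invertible modulo $2^n-1$ precisely when $\gcd(K_r,2^n-1)=1$, I would first record the standard evaluations of such gcds. Writing $v_2$ for the $2$-adic valuation, one has $\gcd(2^a-1,2^b-1)=2^{\gcd(a,b)}-1$ and, by a short argument on the multiplicative order of $2$ modulo a prime divisor,
\begin{equation*}
\gcd(2^m+1,2^n-1)=\begin{cases} 2^{\gcd(m,n)}+1 & \text{if } v_2(n)>v_2(m),\\ 1 & \text{if } v_2(n)\le v_2(m).\end{cases}
\end{equation*}
It is convenient to translate the dichotomy in the statement: $\tfrac{n}{\gcd(r,n)}$ is odd exactly when $v_2(n)\le v_2(r)$, and even exactly when $v_2(n)>v_2(r)$. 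Finally, reducing modulo $2^r+1$ gives $K_r\equiv 3\pmod{2^r+1}$, so $\gcd(K_r,2^r+1)=\gcd(3,2^r+1)$, which equals $3$ if $r$ is odd and $1$ if $r$ is even; this isolates $3$ as the only delicate common factor. I would also note that $3\mid K_r$ iff $r$ is odd, and $3\mid 2^n-1$ iff $n$ is even.

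With these tools I split into the two cases. If $\tfrac{n}{\gcd(r,n)}$ is odd, then $v_2(n)\le v_2(r)=v_2(3r)$, so the formula gives $\gcd(2^{3r}+1,2^n-1)=1$; as $K_r\mid 2^{3r}+1$, this forces $\gcd(K_r,2^n-1)=1$ and $K_r$ is invertible. This settles the first bullet at once.

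The interesting case is $\tfrac{n}{\gcd(r,n)}$ even, i.e. $v_2(n)>v_2(r)$; here $r'=r/\gcd(r,n)$ is odd, since the $2$-adic valuations of numerator and denominator agree. Put $g=\gcd(r,n)$ and note $\gcd(3r,n)=g\cdot\gcd(3,n/g)$, so the condition $\gcd(r,n)=\gcd(3r,n)$ is exactly $3\nmid n/g$. If $3\nmid n/g$, then $\gcd(2^{3r}+1,2^n-1)=\gcd(2^r+1,2^n-1)=2^g+1$, and since $r'$ is odd we have $2^g+1\mid 2^r+1$; hence any common prime of $K_r$ and $2^n-1$ divides $2^{3r}+1$ and therefore $2^g+1\mid 2^r+1$, so it divides $\gcd(K_r,2^r+1)=\gcd(3,2^r+1)$ and must equal $3$. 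Thus $K_r$ is invertible iff $3$ is not a common factor, i.e. iff $r$ is even (if $r$ were odd then $v_2(n)>0$ forces $n$ even, and $3$ divides both), which matches the second bullet.

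It remains to rule out invertibility when $3\mid n/g$. The hard part is to exhibit an explicit common prime. Here $3\mid n/g$ and $\gcd(r/g,n/g)=1$ force $3\nmid r'$, so together with $r'$ odd we get $\gcd(r',6)=1$. Now $\gcd(3r,n)=3g$, so $2^{3g}+1=\gcd(2^{3r}+1,2^n-1)$ divides $2^n-1$, and $2^{3g}+1=(2^g+1)K_g$ with $K_g=2^{2g}-2^g+1\ge 3$. Pick any prime $q\mid K_g$ and set $x\equiv 2^g\pmod q$; then $x^2-x+1\equiv 0$, whence $x^3\equiv -1$ and $x^6\equiv 1$. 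Since $\gcd(r',6)=1$, we reduce the exponent $r'$ modulo $6$ to $1$ or $5$, and a direct computation gives $x^{2r'}-x^{r'}+1\equiv 0\pmod q$, i.e. $q\mid K_r$; as $q\mid 2^{3g}+1\mid 2^n-1$, we conclude $\gcd(K_r,2^n-1)>1$ and $K_r$ is not invertible. I expect this last step to be the main obstacle, in particular verifying that the exponent reduction works uniformly (including the possibility $q=3$, where $x^3\equiv -1$ still holds); everything else reduces to the two gcd identities above.
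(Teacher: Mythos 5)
Your proof is correct, but there is nothing in the paper to compare it against: Proposition~\ref{prop:invexist} is stated with a citation to \cite{FFA:KyuSud14} (Lemma 3.8) and the paper gives no proof of its own, so you have in effect supplied a self-contained argument for an imported result. Your route is sound and complete. The identity $K_r(2^r+1)=2^{3r}+1$ together with the two gcd evaluations $\gcd(2^a-1,2^b-1)=2^{\gcd(a,b)}-1$ and $\gcd(2^m+1,2^n-1)\in\{2^{\gcd(m,n)}+1,1\}$ according as $v_2(n)>v_2(m)$ or not does all the work; I verified the delicate points. The dictionary $\tfrac{n}{\gcd(r,n)}$ odd $\iff v_2(n)\le v_2(r)$ is right and settles the first bullet immediately. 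In the even case, $r/\gcd(r,n)$ is indeed odd, the translation $\gcd(r,n)=\gcd(3r,n)\iff 3\nmid n/\gcd(r,n)$ is correct, and the reduction $K_r\equiv 3\pmod{2^r+1}$ correctly isolates $3$ as the only possible common prime, which divides both $K_r$ and $2^n-1$ exactly when $r$ is odd (note $n$ is forced to be even there, so $3\mid 2^n-1$). The last case is handled properly as well: with $g=\gcd(r,n)$ one has $6g\mid n$, so $2^{3g}+1=(2^g+1)K_g$ divides $2^n-1$, and any prime $q\mid K_g$ gives $x=2^g$ with $x^2-x+1\equiv 0$ and $x^6\equiv 1\pmod q$; since $\gcd(r/g,6)=1$, reducing the exponent modulo $6$ yields $q\mid K_r$ (the computation goes through for $q=3$ too, as you note). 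The case split is exhaustive, so both directions of the equivalence are established.
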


We first deal with the case $\gcd(r,n)=1$, then with the case $\frac{n}{\gcd(r,n)}$ odd and finally with the case $\frac{n}{\gcd(r,n)}$ even. Technically, the case $\gcd(r,n)=1$ is included in the case $\frac{n}{\gcd(r,n)}$ odd. However, we single out this case for two reasons: Firstly, it is particularly interesting since those Kasami exponents are precisely the APN exponents. Secondly, the case $\frac{n}{\gcd(r,n)}$ odd is very technical, but can be described much easier by applying the results for the special case $\gcd(r,n)=1$.

\subsection{The case $\gcd(r,n)=1$}
We first deal with the APN Kasami exponents $K_r=2^{2r}-2^r+1$ over $\F_{2^n}$ with $\gcd(r,n)=1$. For this case, the modular add-with-carry approach was already applied in \cite{stickel} to determine the Walsh support of the Kasami function $x \mapsto x^{K_r}$. 

By Proposition~\ref{prop:invexist}, $K_r$ is invertible if and only if $n$ is odd. We denote by $e$ the least positive residue of the inverse of $r$ modulo $n$. Observe that $K_r$ and $K_{n-r}$ are cyclotomic equivalent exponents on $\F_{2^n}$. Indeed, $(2^{2(n-r)}-2^{n-r}+1)2^{2r} \equiv 2^{2r}-2^r+1 \pmod{2^n-1}$. Then $K_r^{-1} \equiv 2^{-2r}K_{n-r}^{-1}\pmod{2^n-1}$, so it suffices to determine the inverse of one of these two values. Since $n$ is odd, we can thus assume without loss of generality that $e$ is odd. 

Since $\gcd(r,n)=1$ we can reorder the sequences $a$ and $c$ in Theorem~\ref{thm:xiang} using the $r$-sequences introduced in the previous section.
Using $r$-ordered sequences, the key equation for the Kasami exponents in Theorem \ref{thm:xiang} takes on the following form:
\begin{theorem} \label{thm:gcd1}
	Let $n \in \N$, $a\in \{1,\dots,2^n-2\}$ and $K_r$ be the $r$-th Kasami exponent with $\gcd(r,n)=1$. Let $(a_{0},\dots,a_{n-1})$ be the $r$-ordered sequence of the binary representation of $a$, i.e. $a\equiv \sum_{i=0}^{n-1}a_i2^{-ir} \pmod{2^n-1}$. 	The following are equivalent:
	\begin{enumerate}[label=(\alph*)]
		\item $a$ is the inverse of $K_r$ modulo $2^n-1$.
			\item There exists an $r$-ordered carry sequence $c=(c_0,c_1,\dots,c_{n-1})$ with $c_i \in \{-1,0,1\}$  such that 
	\begin{align} 
		2c_0-c_{e}+1&=a_{2}-a_{1}+a_{0} \label{eq:gcd1_1} \\
		2c_i-c_{i+e}&=a_{i+2}-a_{i+1}+a_{i} \label{eq:gcd1_2}
	\end{align}
		holds for all $i\in \Z_{n}$, $i \neq 0$. 
	\end{enumerate}
	The carry sequence in (b) is unique.
\end{theorem}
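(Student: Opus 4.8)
The plan is to recognize this theorem as nothing more than Theorem~\ref{thm:xiang} specialized to $s=1$ and rewritten in $r$-ordered coordinates, so the whole argument is careful bookkeeping of indices rather than any new idea. First I would set $s=1$ in Theorem~\ref{thm:xiang}. Then the congruence $s \equiv K_r \cdot a \pmod{2^n-1}$ in part~(a) reads $K_r \cdot a \equiv 1 \pmod{2^n-1}$, which is exactly the statement that $a$ is the inverse of $K_r$; and the carry sequence still ranges over $c_i \in \{-1,0,1\}$ as in the original theorem. It then remains to transport the fundamental equation~\eqref{eq:fund} into the $r$-ordering $a_j := a_{-jr}$, $c_j := c_{-jr}$ (indices in $\Z_n$), which is a genuine reindexing bijection precisely because $\gcd(r,n)=1$.

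Next I would evaluate~\eqref{eq:fund} at the standard index $i=-jr$ and rewrite every term. The three right-hand terms translate immediately: $a_{-jr}=a_j$, $a_{-jr-r}=a_{-(j+1)r}=a_{j+1}$, and $a_{-jr-2r}=a_{-(j+2)r}=a_{j+2}$, while $c_{-jr}=c_j$. The only term that requires thought is the predecessor $c_{i-1}=c_{-jr-1}$, since $-jr-1$ is not visibly a multiple of $-r$. To express it in $r$-ordered form I would solve $-kr \equiv -jr-1 \pmod n$, which rearranges to $(k-j)r \equiv 1 \pmod n$, hence $k \equiv j+e \pmod n$ by the definition of $e$ as the inverse of $r$ modulo $n$. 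Thus $c_{-jr-1}=c_{j+e}$. This predecessor identity is the one non-routine point; it is the exact analogue of the predecessor computation carried out in the proof of Theorem~\ref{thm:gcd>1_gold}.

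Finally I would locate the spike of $s$: since $s=1$ we have $s_i=1$ if and only if $i\equiv 0 \pmod n$, and in the $r$-ordering $s_{-jr}=1$ if and only if $-jr\equiv 0 \pmod n$, i.e. $j=0$ (again using $\gcd(r,n)=1$). Substituting all of this into~\eqref{eq:fund} yields $2c_0-c_e+1 = a_2-a_1+a_0$ for $j=0$, which is~\eqref{eq:gcd1_1}, and $2c_j-c_{j+e}=a_{j+2}-a_{j+1}+a_j$ for $j\neq 0$, which is~\eqref{eq:gcd1_2}. Since the reindexing is a bijection, the two systems of equations are equivalent, and the uniqueness of the $r$-ordered carry sequence is inherited directly from the uniqueness asserted in Theorem~\ref{thm:xiang}.
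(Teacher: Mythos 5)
Your proposal is correct and matches the paper's (implicit) argument exactly: the paper states Theorem~\ref{thm:gcd1} as an immediate reindexing of Theorem~\ref{thm:xiang} with $s=1$, and the predecessor computation $c_{-jr-1}=c_{j+e}$ that you carry out is precisely the one the paper spells out in the analogous proof of Theorem~\ref{thm:gcd>1_gold}. Nothing is missing.
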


Experimental results show that the inverses of the APN Kasami exponents often have binary weight $\frac{n+1}{2}$. In this case, Lemma \ref{lem:carry} immediately shows that the $r$-ordered carry sequence has weight $\frac{n-1}{2}$ and must be a cyclic shift of the sequence $(0,0,1,0,1,\dots,0,1)$. Since the carry sequence  of the inverse uniquely determines the inverse, these cases can then be solved with comparatively little effort. 

In this section, we will always use $r$-ordered sequences to represent inverses of Kasami exponents because this notation makes the description much easier. Consequently, the inverses will be written in the form $K_r^{-1}\equiv \sum_{i=0}^{n-1}a_i2^{-ir}\pmod {2^n-1}$ for a sequence $a=(a_0,\dots,a_{n-1})$. Of course, a translation into the more standard binary representation is easy by reordering the sequence $a$, i.e.  $K_r^{-1}\equiv \sum_{i=0}^{n-1}a_{-ie}2^{i}\pmod {2^n-1}$ (recall that $e$ denotes the inverse of $r$ modulo $n$).

\begin{proposition} \label{prop:gcd1_1}
	Let $n$ odd, $K_r$ be the $r$-th Kasami exponent with $\gcd(r,n)=1$. Let $e$ be the least positive residue of the inverse of $r$ modulo $n$.  The inverse of $K_r$ modulo $2^n-1$ is 
			\begin{equation*}
			K_r^{-1} \equiv \sum_{i=0}^{n-1}a_i2^{-ir} \pmod {2^n-1},
		\end{equation*}
		where $a=(a_0,\dots,a_{n-1})$ is determined as follows:
	\begin{itemize}
			\item If $e=6k+1$, then $a=(1,x,y)$ and $x=(1,0,1,0,\dots,1,0,1,0)$ is a sequence of length $n-e$ and $y=(1,1,1,0,0,0,1,1,1,0,0,0,\dots,1,1,1,0,0,0)$ is a sequence of length $6k$. 
		\item If $e=6k+5$, then $a=(0,x,1,1,y)$ and $x=(0,1,0,1,\dots,0,1,0,1)$ is a sequence of length $n-e+2$ and $y=(0,0,0,1,1,1,0,0,0,1,1,1,\dots,0,0,0,1,1,1)$ is a sequence of length $6k$. 
	\end{itemize}
	In both cases, we have $\wt(K_r^{-1})=\frac{n+1}{2}$.
\end{proposition}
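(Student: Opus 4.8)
The plan is to follow the same guess-and-verify strategy used for the APN Gold exponents in Proposition~\ref{prop:goldgcd1}: by the uniqueness clause of Theorem~\ref{thm:gcd1} it suffices to exhibit, for each of the two cases, the claimed $r$-ordered sequence $a=(a_0,\dots,a_{n-1})$ together with an admissible carry sequence $c=(c_0,\dots,c_{n-1})$ with $c_i\in\{-1,0,1\}$, and to check that the pair $(a,c)$ satisfies Eq.~\eqref{eq:gcd1_1} and Eq.~\eqref{eq:gcd1_2} for every $i\in\Z_n$. Once the equations hold, uniqueness identifies $a$ with $K_r^{-1}$ and the value $\wt(K_r^{-1})=\frac{n+1}{2}$ is read off by counting the ones in $a$. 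Lemma~\ref{lem:carry} guides the guess: if $\wt(a)=\frac{n+1}{2}$ then $\wt(c)=\frac{n-1}{2}$, and since consecutive entries of the $r$-ordered carry satisfy $c_i+c_{i+1}\in\{-1,0,1\}$, the carry cannot contain two adjacent ones. A cyclic $\{0,1\}$-sequence of length $n$ (with $n$ odd) and weight $\frac{n-1}{2}$ having no two adjacent ones must be a single cyclic shift of the alternating sequence with exactly one ``double zero''. This is the carry I would propose.

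First I would record the local shape of the right-hand side $a_{i+2}-a_{i+1}+a_i$ in each block of $a$. In the alternating block $x$ one gets the value $2$ at the odd positions (where $a_i=1$) and $-1$ at the even positions, while the period-$6$ block $y=(1,1,1,0,0,0,\dots)$ is engineered precisely so that the same operator returns the alternating pattern $1,0,1,0,\dots$; together with the parity of the block boundary this makes the right-hand side equal $1$ at the odd positions of $y$ and $0$ at its even positions. The boundary position $i=0$ is governed by Eq.~\eqref{eq:gcd1_1}, whose right-hand side is $a_2-a_1+a_0=0$. Writing out the proposed carry as the alternating sequence with one double zero, placed so that $c_0=0$ and $c_e=1$, I would then verify $2c_i-c_{i+e}=a_{i+2}-a_{i+1}+a_i$ position by position, distinguishing the finitely many types of $i$ according to which block contains $i$ and which block contains $i+e$. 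The point is that in the alternating block the pairs $(c_i,c_{i+e})$ realize the values giving $2$ and $-1$, whereas the positions whose image $i+e$ falls past the double zero have the opposite phase and realize the pairs giving $1$ and $0$, which are exactly the values demanded in the $y$ block.

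The main obstacle is the interaction of the step-$e$ shift $i\mapsto i+e$ with the block structure of $a$ and the single defect of $c$, together with the wraparound of indices modulo $n$. Concretely, one must locate precisely where $i+e$ lands for $i$ in each block, since this determines whether $c_{i+e}$ is in phase or out of phase with $c_i$, and one must pin down the location of the double zero so that the phase flip it induces aligns the $x$-type equations with the $y$-type equations at both block boundaries simultaneously. This alignment is what forces the distinction between the two cases: the length and starting phase of the period-$6$ block $y$ close up consistently around the cycle only when $e$ is coprime to $6$, and the residue $e\equiv1$ versus $e\equiv5\pmod 6$ changes the parity of the boundary between $x$ and $y$, hence the slightly different prefixes $(1,x,\dots)$ and $(0,x,1,1,\dots)$ in the statement. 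Once every position type checks out, Theorem~\ref{thm:gcd1} gives $a=K_r^{-1}$, and the weight count over the two blocks yields $\wt(K_r^{-1})=\frac{n+1}{2}$ in both cases.
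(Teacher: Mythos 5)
Your proposal follows essentially the same route as the paper's proof: the paper likewise guesses the carry to be the alternating $\{0,1\}$-sequence of weight $\frac{n-1}{2}$ with a single cyclic double zero (namely $c=(0,1,0,1,\dots,0,1,0)$ for $e=6k+1$ and $c=(0,0,1,0,1,\dots,0,1)$ for $e=6k+5$) and then verifies Eq.~\eqref{eq:gcd1_1} and Eq.~\eqref{eq:gcd1_2} by exactly the case distinction you describe (parity of $i$, and whether $i+e$ wraps past the double zero). One small slip: in the $e=6k+5$ case the boundary equation has $a_2-a_1+a_0=1$ with $c_0=c_e=0$, not right-hand side $0$ with $c_e=1$ as you assert, but this would be caught in the position-by-position check and does not affect the method.
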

\begin{proof}
Let $e=6k+1$. Set $c = (0,1,0,1,\dots ,0,1,0,1,0)$, i.e. $c_i=0$ if $i$ is even, and $c_i=1$ otherwise. We show that $a$ and $c$ satisfy the conditions in Theorem \ref{thm:gcd1}. Eq.~\eqref{eq:gcd1_1} can be easily verified. For Eq.~\eqref{eq:gcd1_2}, we have the following:\\
\emph{Case 1:} $i$ odd, $i+e<n$:
We have $c_i=1$, $c_{i+e}=0$, $a_i=a_{i+2}=1$ and $a_{i+1}=0$. \\
\emph{Case 2:} $i$ even, $i+e<n$:
We have $c_i=0$, $c_{i+e}=1$, $a_i=a_{i+2}=0$ and $a_{i+1}=1$.  \\
\emph{Case 3:} $i$ odd, $i+e\geq n$:
We have $c_i=c_{i+e}=1$.  Depending on the value of $i$, the triple $(a_i,a_{i+1},a_{i+2})$ takes on the values $(1,0,0)$, $(0,0,1)$ or $(1,1,1)$.  \\
\emph{Case 4:} $i$ even, $i+e\geq n$:
We have $c_i=c_{i+e}=0$.  Depending on the value of $i$, the triple $(a_i,a_{i+1},a_{i+2})$ takes on the values $(0,0,0)$, $(0,1,1)$ or $(1,1,0)$.   \\
So Eq.~\eqref{eq:gcd1_2} holds for all $i$.\\

Now let $e=6k+5$. Set $c=(0,0,1,0,1,0,1,0,1,\dots,0,1)$, i.e. $c_i=0$ if  $i=0$ or $i$ odd and $c_i=1$ otherwise. We again show that Equations~\eqref{eq:gcd1_1} and~\eqref{eq:gcd1_2} are satisfied.  Observe that Eq.~\eqref{eq:gcd1_1} holds. We check the following cases of Eq.~\eqref{eq:gcd1_2} for $i>0$:\\
\emph{Case 1:} $i$ odd, $i+e \leq n$:
We have $c_i=0$, $c_{i+e}=1$, $a_i=a_{i+2}=0$ and $a_{i+1}=1$. \\
\emph{Case 2:} $i$ even, $i+e\leq n$:
We have $c_i=1$, $c_{i+e}=0$, $a_i=a_{i+2}=1$ and $a_{i+1}=0$.  \\
\emph{Case 3:} $i$ odd, $i+e>n$:
We have $c_i=c_{i+e}=0$.  Depending on the value of $i$, the triple $(a_i,a_{i+1},a_{i+2})$ takes on the values $(0,0,0)$, $(0,1,1)$ or $(1,1,0)$.  \\
\emph{Case 4:} $i$ even, $i+e> n$:
We have $c_i=c_{i+e}=1$.  Depending on the value of $i$, the triple $(a_i,a_{i+1},a_{i+2})$ takes on the values $(1,0,0)$, $(0,0,1)$ or $(1,1,1)$.   \\
So Eq.~\eqref{eq:gcd1_2} holds for all $i$.
\qed
\end{proof}
The Kasami APN functions and their inverses are also almost bent functions. It is known that the algebraic degree of almost bent functions is at most $\frac{n+1}{2}$ \cite{DDC:CCZ98}. We have shown that the inverses of the Kasami APN functions defined by the exponents considered in Proposition \ref{prop:gcd1_1} attain this bound. 

The only case left to check is $e=6k+3$ (recall that we could assume $e$ odd without loss of generality). This case is a lot more involved and has to be divided into several subcases. The key difference to the cases considered above is that $\wt(K_r^{-1})<\frac{n+1}{2}$ for $e=6k+3$, so finding the correct carry sequence is more complicated. However, the strategy of the proof remains the same: Based on experimental results, we guess a carry sequence that then determines the inverse.

\begin{proposition} \label{prop:gcd1_2}
	Let $n$ odd, $K_r$ be the $r$-th Kasami exponent with $\gcd(r,n)=1$. Let $e=6k+3$ be the least positive residue of the inverse of $r$ modulo $n$. Define $s,t \in \N$ by $n=se+t$ with $0\leq t <e$. Further, let $x_1=(0,0,0,1,1,1)$, $x_2=(0,1,1,1,0,0)$ be sequences of length $6$ and 
	\begin{align*}
		x&=(0,1,1,\underbrace{x_1,\dots,x_1}_{\text{k-times}},0,0,0,\underbrace{x_2,\dots,x_2}_{\text{k-times}}) \\
		y&=(0,0,0,\underbrace{x_2,\dots,x_2}_{\text{k-times}},0,1,1,\underbrace{x_1,\dots,x_1}_{\text{k-times}})
	\end{align*}
	be sequences of length $2e$. Then
		\begin{equation*}
			K_r^{-1} \equiv \sum_{i=0}^{n-1}a_i2^{-ir} \pmod {2^n-1},
		\end{equation*}
		where $a=(a_0,\dots,a_{n-1})$ is determined as follows:
	\begin{enumerate}[label=(\alph*)]
				\item If $t = 6u+1$ then
		\begin{equation*}
		a=(\underbrace{x_1,\dots,x_1}_{u\text{-times}},\underbrace{y,\dots,y}_{(s-2)/2\text{-times}},0,0,0,\underbrace{x_2,\dots,x_2}_{\text{k-times}},0,1,1,\underbrace{x_1,\dots,x_1}_{\text{u-times}},z,0) +(1,0,\dots,0),
		\end{equation*}
	where $z=(0,1,0,1,\dots,0,1,0,1)$ is a sequence of length $e-3-6u$.
		
		\item If $t = 6u+2$ then
		\begin{equation*}
		a=(0,1,\underbrace{x_1,\dots,x_1}_{u\text{-times}},\underbrace{y,\dots,y}_{(s-1)/2\text{-times}},0,0,z,1,z_2),
		\end{equation*}
		where $z=(1,0,1,0\dots,1,0,1,0)$ is a sequence of length $6u$ and $z_2=(x_2,\underbrace{x_1,\dots,x_1}_{\substack{(e-6u-9)/6 \\ \text{-times}}})$ is a sequence of length $e-6u-3$.
		
			\item If $t = 6u+4$ then
		\begin{equation*}
		a=(0,0,0,\underbrace{x_2,\dots,x_2}_{\text{u-times}},\underbrace{x,\dots,x}_{(s-1)/2\text{-times}},0,1,1,0,z,1,0,1,1,1,\underbrace{x_1,\dots,x_1}_{{\substack{(e-6u-9)/6 \\ \text{-times}}}},0)
		\end{equation*}
		where $z=(0,1,0,1,\dots,0,1,0,1)$ is a sequence of length $6u$. 
		
	\item If $t = 6u+5$ then
		\begin{equation*}
		a=(0,1,1,0,0,\underbrace{x_2,\dots,x_2}_{\text{u-times}},\underbrace{x,\dots,x}_{(s-2)/2\text{-times}},0,1,1,\underbrace{x_1,\dots,x_1}_{k\text{-times}},0,\underbrace{x_1,\dots,x_1}_{u\text{-times}},z)
		\end{equation*}
		where $z=(0,1,0,1,\dots,0,1,0,1)$ is a sequence of length $e-1-6u$. 

	\end{enumerate}
	In the cases (a) and (d) we have $\wt(K_r^{-1})=\frac{n-s+1}{2}$ and in the cases (b) and (c) $\wt(K_r^{-1})=\frac{n-s}{2}$.
\end{proposition}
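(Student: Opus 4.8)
My plan is to follow the guess-and-verify template already used for Propositions~\ref{prop:goldgcd1} and~\ref{prop:gcd1_1}. By Theorem~\ref{thm:gcd1} together with its uniqueness clause, the assertion that the stated $a$ equals $K_r^{-1}$ is \emph{equivalent} to producing, in each of the four subcases, an $r$-ordered carry sequence $c=(c_0,\dots,c_{n-1})$ with entries in $\{-1,0,1\}$ for which $a$ and $c$ jointly satisfy the key equations~\eqref{eq:gcd1_1} and~\eqref{eq:gcd1_2}. So the whole task reduces to (i)~writing down the correct $c$ and (ii)~verifying the two equations at every index $i\in\Z_n$; once such a pair $(a,c)$ is exhibited, uniqueness of the carry sequence forces $a=K_r^{-1}$, and the weight formulas then fall out by counting ones in the block decomposition of $a$.

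First I would find $c$, which is the genuinely nontrivial step. The feature that makes $e=6k+3$ harder than the cases $e\equiv 1,5\pmod 6$ is that the shift by $e$ in~\eqref{eq:gcd1_2} equals $k$ full length-$6$ blocks plus a half-block of $3$ positions. Hence, unlike the alternating carry in Proposition~\ref{prop:gcd1_1}, the relation $c_{i+e}=2c_i-(a_{i+2}-a_{i+1}+a_i)$ couples positions that are $k$ blocks apart; since $\gcd(e,n)=1$, the map $i\mapsto i+e$ is a single $n$-cycle, so~\eqref{eq:gcd1_2} is really one global cyclic recurrence rather than a block-local one. A short computation shows a block-periodic carry cannot work: on a run of $x_1$- or $x_2$-blocks the right-hand side cycles through $0,1,0,1,0,1$ over the six offsets, and the offset shift by $3$ swaps even and odd offsets, which already forces $3\gamma=1$ for the value $\gamma$ of a would-be period-$6$ carry. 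I would therefore read $c$ off small examples, giving it a block structure matching $a$: a periodic core (one pattern on the repeated $y$'s in cases~(a),(d), on the repeated $x$'s in cases~(b),(c)) plus short prescribed patterns on the head $x_1^u$/$x_2^u$ segments, the central $(0,1,1)$ and $(0,0,0)$ triples, and the alternating tail $z$. The identity $\wt(c)=\wt(a)-1$ from Lemma~\ref{lem:carry}(b) is a useful check on any candidate.

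With $c$ in hand, I would organize the verification by the position of $i$ relative to the block decomposition. For $i$ in the interior of the periodic core, both $i$ and $i+e$ sit at controlled offsets inside blocks $k$ apart, so~\eqref{eq:gcd1_2} collapses to a finite table over the six offsets within an $x_1$- or $x_2$-block; this parallels the split by the parity of $i$ and whether $i+e$ exceeds $n$ used in Proposition~\ref{prop:gcd1_1}. The remaining indices---those in the head and tail segments, at the junctions between segments, and the single boundary equation~\eqref{eq:gcd1_1} carrying the $+1$---are finitely many and checked directly. The division into $t=6u+1,6u+2,6u+4,6u+5$ is needed precisely because the tail of length $t=n-se$ truncates the pattern differently (and, in case~(a), requires the explicit $+(1,0,\dots,0)$ correction to $a_0$).

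The step I expect to be the main obstacle is the wrap-around. For $i$ near the top, $i+e$ wraps modulo $n$ back into the head segment, so the recurrence ties the very end of $a$ to its beginning, and the head blocks must be chosen so that this closing constraint is met exactly. Pinning down the boundary blocks---the correction term and the exact lengths $e-3-6u$, $e-6u-3$, etc.\ of the tail pieces---so that everything is consistent with $n=se+t$ is where essentially all the bookkeeping lives; once the lengths are fixed, each of the finitely many boundary equations is routine. Finally, the four weight counts follow by summing contributions ($3$ per length-$6$ block, $e-1$ per $x$- or $y$-block, plus the short corrections) and simplifying via $e=6k+3$ and $n=se+t$, yielding $\frac{n-s+1}{2}$ in cases~(a),(d) and $\frac{n-s}{2}$ in cases~(b),(c).
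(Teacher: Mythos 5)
Your overall strategy is exactly the one the paper uses: exhibit the $r$-ordered carry sequence for each of the four subcases, verify Eq.~\eqref{eq:gcd1_1} and Eq.~\eqref{eq:gcd1_2} blockwise, and invoke the uniqueness clause of Theorem~\ref{thm:gcd1} to conclude that the displayed $a$ is $K_r^{-1}$. Your proposed organisation of the verification (interior of the periodic core reduced to a finite table over offsets, plus finitely many head, tail, junction and wrap-around indices checked by hand) is precisely the paper's case split a.1--a.7, and your preliminary observation that no period-$6$ carry can work (the $3\gamma=1$ computation) is correct and consistent with the carry the paper ends up using.

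As written, however, the proposal has a genuine gap: the carry sequences are never actually produced, and they are the entire non-routine content of the proof --- everything else is, as you yourself note, bookkeeping. For the record, they are simpler than your sketch suggests. With $s_1=(0,1,\dots,0,1)$ of length $6u$ and $s_2=(0,0,1,0,1,\dots,0,1)$ of length $e$, the paper takes $c=(s_1,s_2,\dots,s_2,0)$ in case (a), $c=(-1,1,s_1,s_2,\dots,s_2)$ in case (b), $c=(0,0,1,s_1,s_2,\dots,s_2,0)$ in case (c) and $c=(0,0,1,0,1,s_1,s_2,\dots,s_2)$ in case (d), with $s$ copies of $s_2$ in each. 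Thus the periodic core of $c$ has period $e$ --- it is essentially invariant under the shift $i\mapsto i+e$, which is exactly what makes Eq.~\eqref{eq:gcd1_2} collapse to $c_i=a_{i+2}-a_{i+1}+a_i$ on that range --- rather than a pattern tied to the $y$- or $x$-blocks of $a$ as your sketch describes; and note that case (b) requires a carry entry equal to $-1$, which your plan does not anticipate. Without these explicit sequences the verification cannot be carried out, so what you have is a correct and well-aimed plan rather than a proof.
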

\begin{proof}
	For all 4 cases, we explicitly give the carry sequence $c$ (in $r$-ordering) and check that Eq.~\eqref{eq:gcd1_1} and~\eqref{eq:gcd1_2} are satisfied. The carry sequences for all cases are quite similar and are composed of the same ``building blocks''. The verification is simple but tedious, so we will show the correctness of the first case in detail and for the other cases we will just state the carry sequence and omit the verification. We define the auxiliary sequences $s_1=(0,1,0,1,\dots,0,1)$ of length $6u$ and $s_2=(0,0,1,0,1,0,1,0,1,\dots,0,1)$ of length $e=6k+3$.  \\
	\emph{Case (a)}: Set 
	\begin{equation*}
		c=(s_1,\underbrace{s_2,\dots,s_2}_{s\text{-times}},0).
	\end{equation*}
	Eq.~\eqref{eq:gcd1_1} can be easily verified. For Eq.~\eqref{eq:gcd1_2} we have to distinguish (many) different cases depending on the value of $i$. We go through each block in the sequence $a$.\\
	\underline{Case a.1:} $i>0$ is in the first block of $x_1$'s. If $i$ is even then we have $c_i=c_{i+e}=0$ and $(a_i,a_{i+1},a_{i+2})\in \{(0,1,1),(1,1,0),(0,0,0)\}$. If $i$ is odd then $c_i=c_{i+e}=1$ and $(a_i,a_{i+1},a_{i+2})\in \{(0,0,1),(1,1,1),(1,0,0)\}$.\\
	\underline{Case a.2:} $i$ is in the block of $y$'s. Let $i=6u+q$. If $q \equiv  1,2,e+1,e+2 \pmod {2e}$ then $c_i=c_{i+e}=0$. In these first two cases we have $(a_i,a_{i+1},a_{i+2})=(0,0,0)$ and in the latter two $(a_i,a_{i+1},a_{i+2})=(0,1,1)$ and $(a_i,a_{i+1},a_{i+2})=(1,1,0)$, respectively. Let $q_1$ be the least positive residue of $q$ modulo $2e$.  If $3 \leq q_1 \leq e$ and $q_1$ odd we have  $c_i=c_{i+e}=0$ and $(a_i,a_{i+1},a_{i+2})\in \{(0,1,1),(1,1,0),(0,0,0)\}$. If $3 \leq q_1 \leq e$ and $q_1$ even, we have $c_i=c_{i+e}=1$ and $(a_i,a_{i+1},a_{i+2})\in \{(0,0,1),(1,1,1),(1,0,0)\}$. If $q_1>e+2$ and $q_1$ odd we have $c_i=c_{i+e}=1$ and $(a_i,a_{i+1},a_{i+2})\in \{(0,0,1),(1,1,1),(1,0,0)\}$ and if $q_1>e+2$ and $q_1$ even we have $c_i=c_{i+e}=0$ and $(a_i,a_{i+1},a_{i+2})\in \{(0,1,1),(1,1,0),(0,0,0)\}$.\\
	\underline{Case a.3:} $i$ is in the position of the three zeros after the block of $y$'s. For the first two zeros (i.e. $i=6u+e(s-2)$ and $i=6u+e(s-2)+1$) we have $c_i=c_{i+e}=0$ and $(a_i,a_{i+1},a_{i+2})=(0,0,0)$. For $i=6u+e(s-2)+2$ we have $c_i=c_{i+e}=1$ and $(a_i,a_{i+1},a_{i+2})=(0,0,1)$. \\
\underline{Case a.4:} $i$ is in the block of $x_2$'s, i.e. $i\in \{6u+e(s-2)+3,\dots,6u+e(s-2)+6k+2\}$. If $i$ is odd, we have $c_i=c_{i+e}=1$ and $(a_i,a_{i+1},a_{i+2})\in \{(0,0,1),(1,1,1),(1,0,0)\}$ and if $i$ is even $c_i=c_{i+e}=0$ and $(a_i,a_{i+1},a_{i+2})\in \{(0,1,1),(1,1,0),(0,0,0)\}$. \\
	\underline{Case a.5:} $i \in \{6u+e(s-2)+6k+3,\dots,6u+e(s-2)+6k+5\}$. If $i=6u+e(s-2)+6k+3$ then $c_{i}=0$, $c_{i+e}=c_{n-1}=0$ and $(a_i,a_{i+1},a_{i+2})=(0,1,1)$. For $i=6u+e(s-2)+6k+4$ we have $c_{i}=0$, $c_{i+e}=c_{0}=0$ and $(a_i,a_{i+1},a_{i+2})=(1,1,0)$ and for $i=6u+e(s-2)+6k+5$ we have $c_{i}=1$, $c_{i+e}=c_{1}=1$ and $(a_i,a_{i+1},a_{i+2})=(1,0,0)$. \\
	\underline{Case a.6:} $i$ is in the second block of $x_1$'s, i.e. $i \in \{6u+e(s-2)+6k+6,\dots,12u+e(s-2)+6k+5\}$. If $i$ is even, we have $c_{i}=c_{i+e}=1$ and $(a_i,a_{i+1},a_{i+2})\in \{(0,1,1),(1,1,0),(0,0,0)\}$. If $i$ is odd and $i \neq 12u+e(s-2)+6k+5$ we have $c_{i}=c_{i+e}=0$ and $(a_i,a_{i+1},a_{i+2})\in \{(0,0,1),(1,1,1),(1,0,0)\}$. If $i=12u+e(s-2)+6k+5$ then $c_i=1$, $c_{i+e}=0$ and $(a_i,a_{i+1},a_{i+2})=(1,0,1)$. \\
	\underline{Case a.7:} $i$ is in the subsequence $z$, i.e. $i \in \{12u+e(s-2)+6k+5,6u+e(s-1)+6k+1\}$. If $i$ is even then $c_i=0$, $c_{i+e}=1$ and $(a_i,a_{i+1},a_{i+2})=(0,1,0)$. If $i$ is odd then $c_i=1$, $c_{i+e}=0$ and $(a_i,a_{i+1},a_{i+2})=(1,0,1)$.\\
	So Eq.\eqref{eq:gcd1_2} holds for all $i \neq 0$. \\
	We state the $r$-ordered carry sequences for the other cases:\\
	\emph{Case (b)}: 
	\begin{equation*}
		c=(-1,1,s_1,\underbrace{s_2,\dots,s_2}_{s\text{-times}}).
	\end{equation*}
		\emph{Case (c)}: 
	\begin{equation*}
		c=(0,0,1,s_1,\underbrace{s_2,\dots,s_2}_{s\text{-times}},0).
	\end{equation*}
		\emph{Case (d)}: 
	\begin{equation*}
		c=(0,0,1,0,1,s_1,\underbrace{s_2,\dots,s_2}_{s\text{-times}}).
	\end{equation*}
	\qed
\end{proof}

Note that Proposition \ref{prop:gcd1_2} lists all possible options. Indeed, the cases $t=6u$ and $t=6u+3$ do not occur because in these cases $n=se+t$ is divisible by $3$, so $e=6k+3$ is never invertible modulo $n$. 

\begin{corollary} \label{cor:algdeg}
	Let $n \in \N$ and $K_r$ be the $r$-th Kasami exponent with $\gcd(n,r)=1$. Let $K_r^{-1}$ be the inverse of $K_r$ modulo $2^n-1$. Then $\wt(K_r^{-1})=\frac{n+1}{2}$ for $n \equiv 0 \pmod 3$. Moreover, we have 
	\begin{equation*}
	\wt(K_r^{-1})\geq \begin{cases} \frac{n+2}{3} & \text{ if }n\equiv 1 \pmod 3 \\
													\frac{n+1}{3} & \text{ if }n\equiv 2 \pmod 3.
									\end{cases}
	\end{equation*}

	The lower bound is attained if and only if $e=3$.
\end{corollary}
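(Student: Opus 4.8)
The plan is to read the weight $\wt(K_r^{-1})$ directly off the explicit forms of the inverse obtained in Propositions~\ref{prop:gcd1_1} and~\ref{prop:gcd1_2}, and to organize everything by the residue modulo $6$ of the (odd, after the WLOG reduction $K_r\sim K_{n-r}$) inverse exponent $e$. The one structural fact I would isolate first is that, for fixed $n$, the weight depends on $r$ only through $e$, since $s$ and $t$ are completely determined by $n=se+t$ with $0\le t<e$. Since $\wt$ is cyclotomic-invariant and $\wt(K_r^{-1})=\wt(K_{n-r}^{-1})$, restricting to odd $e$ loses nothing.

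I would dispose of $n\equiv 0\pmod 3$ first. Here $3\mid n$ together with $\gcd(e,n)=1$ forces $3\nmid e$, so $e\equiv 1$ or $5\pmod 6$, and Proposition~\ref{prop:gcd1_1} gives $\wt(K_r^{-1})=\tfrac{n+1}{2}$ outright. In particular $e=6k+3$ (hence $e=3$) cannot occur, which is exactly why the lower-bound statement is restricted to $n\not\equiv 0\pmod 3$.

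For $n\not\equiv 0\pmod 3$ I would split on $e\bmod 6$. If $e\equiv 1,5\pmod 6$, Proposition~\ref{prop:gcd1_1} again yields $\tfrac{n+1}{2}$, and the elementary estimates $\tfrac{n+1}{2}>\tfrac{n+2}{3}$ (for $n>1$) and $\tfrac{n+1}{2}>\tfrac{n+1}{3}$ show these never reach the bound. The substantive case is $e=6k+3$, governed by Proposition~\ref{prop:gcd1_2}. Because $e\equiv 0\pmod 3$, we have $n\equiv t\pmod 3$, so the admissible residue $t\bmod 6\in\{1,2,4,5\}$ is pinned down by $n\bmod 3$: cases (a),(c) occur precisely when $n\equiv 1$ and cases (b),(d) precisely when $n\equiv 2$. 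Feeding the weight formulas $\tfrac{n-s+1}{2}$ (cases a,d) and $\tfrac{n-s}{2}$ (cases b,c) together with the crude bound $s=(n-t)/e\le (n-t)/3$ (from $e\ge 3$) into each subcase gives exactly $\wt\ge\tfrac{n+2}{3}$ for $n\equiv1$ and $\wt\ge\tfrac{n+1}{3}$ for $n\equiv2$.

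The equality analysis is the part I would write most carefully, since this is where the hypotheses interlock and where the ``iff $e=3$'' claim is decided. In each residue class exactly one subcase can attain its bound, and only when $t$ is minimal and $e=3$: for $n\equiv1$ this is case (a) with $t=1$ (using $t\ge1$, $e\ge3$ in $s\le\tfrac{n-1}{3}$), and for $n\equiv2$ it is case (b) with $t=2$. In the complementary subcases (c) and (d) the smallest admissible $t$ is $4$, respectively $5$, so attaining equality would require $e=3$ with $t\ge4$, contradicting $t<e=3$; these subcases, like the $e\equiv1,5$ ones, are therefore strict. Conversely $e=3$ with $0\le t<3$ and $t\equiv n\pmod3$ forces $t=1$ (case a) when $n\equiv1$ and $t=2$ (case b) when $n\equiv2$, giving equality. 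The only real obstacle is the bookkeeping of matching the four subcases of Proposition~\ref{prop:gcd1_2} to $n\bmod3$ and tracking the equality conditions; once the correspondence $t\mapsto(\text{case},\,n\bmod3)$ is set up, every remaining inequality is a one-line estimate.
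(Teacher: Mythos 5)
Your proof is correct and follows essentially the same route as the paper: both dispose of $n\equiv 0\pmod 3$ via Proposition~\ref{prop:gcd1_1} and then minimize the weight formulas of Proposition~\ref{prop:gcd1_2} over $e$, with the minimum forced at $e=3$, $t\in\{1,2\}$. Your write-up is somewhat more explicit about the ``only if'' direction of the equality claim (verifying that subcases (c), (d) and the $e\equiv 1,5\pmod 6$ cases stay strictly above the bound), but the underlying argument is the same.
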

\begin{proof}
If $n \equiv 0 \pmod 3$ then $e$ is not divisible by $3$ since $\gcd(e,n)=1$. The result then follows from Proposition \ref{prop:gcd1_1}.

	For the other cases, using the notation of Proposition \ref{prop:gcd1_2}, the binary weight $\wt(K_r^{-1})$ is minimal when $s$ is maximal. For $n=se+t$ with $0 <t<e$ this clearly implies minimizing $e$, so $e=3$ and $t \in \{1,2\}$. For these cases we have 
	\begin{equation*}
		\wt(K_r^{-1})=\begin{cases} \frac{n-\frac{n-1}{3}+1}{2}=\frac{n+2}{3} & \text{ if }t=1 \\
													\frac{n-\frac{n-2}{3}}{2}=\frac{n+1}{3} & \text{ if }t=2
									\end{cases}
	\end{equation*}
	and the result follows.
	\qed
\end{proof}
It is known that a vectorial Boolean function $f$ is always CCZ-equivalent to its inverse $f^{-1}$. It is however not clear when a function is EA-equivalent  to its inverse. Since EA equivalence preserves the algebraic degree, we get the following easy corollary.
\begin{corollary}
Let $n \in \N$ odd and $K_r$ be the $r$-th Kasami exponent with $\gcd(n,r)=1$ and $r<\frac{n}{2}$. Let $f=x^{K_r}$ be the $r$-th Kasami function on $\F_{2^n}$. If $n \equiv 0 \pmod 3$ and $r \neq \frac{n-1}{2}$ then $f$ is not EA equivalent to $f^{-1}$. If $n \not\equiv 0 \pmod 3$ and $r<\frac{n-2}{3}$ then $f$ is not EA-equivalent to $f^{-1}$.
\end{corollary}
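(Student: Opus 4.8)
The plan is to exploit the single fact recalled immediately before the corollary: EA-equivalence preserves the algebraic degree. Hence it suffices to show that $f$ and $f^{-1}$ have \emph{different} algebraic degrees under the stated hypotheses. Since the algebraic degree of a monomial $x \mapsto x^l$ equals $\wt(l)$, the whole problem reduces to comparing $\wt(K_r)$ with $\wt(K_r^{-1})$, and the latter quantity is already controlled by Corollary~\ref{cor:algdeg}.

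First I would pin down $\deg f = \wt(K_r)$. Writing $K_r = 2^{2r}-2^r+1$, the positions carrying a nonzero binary digit are exactly $\{0\}\cup\{r,r+1,\dots,2r-1\}$, which suggests $\wt(K_r)=r+1$. The hypothesis $r<\frac{n}{2}$ gives $2r<n$, so $K_r<2^{n-1}<2^n-1$ and all these positions are distinct and lie in $\{0,\dots,n-1\}$ with no wraparound or collision modulo $2^n-1$. Thus $\deg f = r+1$ holds unconditionally under the stated assumptions.

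Next I would simply read off $\deg f^{-1}=\wt(K_r^{-1})$ from Corollary~\ref{cor:algdeg}, and the argument becomes a case comparison. For $n\equiv 0 \pmod 3$ the corollary gives the exact value $\wt(K_r^{-1})=\frac{n+1}{2}$; the equality $r+1=\frac{n+1}{2}$ forces $r=\frac{n-1}{2}$, so excluding $r=\frac{n-1}{2}$ makes the two degrees unequal and $f$ cannot be EA-equivalent to $f^{-1}$. For $n\not\equiv 0 \pmod 3$ the corollary gives the lower bounds $\wt(K_r^{-1})\geq\frac{n+2}{3}$ (if $n\equiv 1$) and $\wt(K_r^{-1})\geq\frac{n+1}{3}$ (if $n\equiv 2$); in both residue classes one has $\wt(K_r^{-1})\geq\frac{n+1}{3}$. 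The hypothesis $r<\frac{n-2}{3}$ then yields $\deg f = r+1 < \frac{n+1}{3}\leq \wt(K_r^{-1})=\deg f^{-1}$, so again the degrees differ and the two functions are not EA-equivalent.

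There is essentially no hard step here: all of the content is carried by Corollary~\ref{cor:algdeg}. The only points requiring care are the verification that $r<\frac{n}{2}$ prevents any collision of binary digits in $K_r$ (so that $\deg f$ is genuinely $r+1$ and not smaller), and checking that the single bound $\frac{n+1}{3}$ dominates $r+1$ simultaneously in both residue classes $n\equiv 1$ and $n\equiv 2 \pmod 3$, which is what allows the two non-divisible cases to be handled by one inequality.
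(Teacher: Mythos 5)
Your proposal is correct and is exactly the argument the paper intends: the corollary is stated without a written proof precisely because it follows from the degree-preservation of EA-equivalence, the observation $\wt(K_r)=r+1$, and the values/bounds for $\wt(K_r^{-1})$ in Corollary~\ref{cor:algdeg}. Your verification that $r<\frac{n}{2}$ rules out digit collisions and that $r+1<\frac{n+1}{3}\leq\wt(K_r^{-1})$ covers both residue classes is the right level of care.
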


\subsection{The case $\frac{n}{\gcd(n,r)}$ odd}

We now deal with the Kasami exponents $K_r$ with $\gcd(n,r)>1$ and $\frac{n}{\gcd(n,r}$ odd. While these Kasami exponents are not APN, they still have some interesting properties. For example, for $\gcd(r,n)=2$ and $\frac{n}{2}$ odd, the function $x \mapsto x^{K_r}$ (and thus also its inverse) is a permutation with differential uniformity 4 (see Table~\ref{t:4uni}). 

Since $\gcd(n,r)>1$, we cannot use the $r$-ordering of sequences that we used in the previous section. Just like in the case of Gold functions, we will thus use $r$-matrices (introduced in Definition~\ref{def:rmatrix}).

In accordance to the notation used in the previous subsection, we denote by $e$ the least positive residue of the inverse of $\frac{r}{\gcd(n,r)}$ modulo $\frac{n}{\gcd(n,r)}$. 
Since $\gcd(n,r)=\gcd(n-r,r)$, $\frac{n}{\gcd(n,r)}$ odd and $K_r$ is cyclotomic equivalent to $K_{n-r}$, it again suffices to determine the inverses of $K_r$ where $e$ is odd. 

Using $r$-matrices, Theorem \ref{thm:xiang} takes on the following form.

\begin{theorem} \label{thm:gcd>1}
	Let $a \in \{1,\dots,2^n-2\}$, $n \in N$ and $K_r$ be the $r$-th Kasami exponent with $\gcd(r,n)=d$ and $e$ be the least positive residue of $\frac{r}{d}$ modulo $\frac{n}{d}$. Moreover, let \begin{equation*}
		M_{a,r}=\begin{pmatrix}
			a_{0,0}&a_{0,1}&a_{0,2}&\hdots&a_{0,\frac{n}{d}-1} \\
			a_{1,0}&a_{1,1}&a_{1,2}&\hdots&a_{1,\frac{n}{d}-1} \\
			\vdots& & & &\vdots \\
			a_{d-1,0}&a_{d-1,1} &a_{d-1,2}& \hdots & a_{d-1,\frac{n}{d}-1}
		\end{pmatrix}
	\end{equation*} be the $r$-matrix of $a$, i.e. $a\equiv \sum_{i=0}^{d-1}\sum_{j=0}^{\frac{n}{d}-1}a_{i,j}2^{i-jr} \pmod{2^n-1}$. 	The following are equivalent:
	\begin{enumerate}[label=(\alph*)]
		\item $a$ is the inverse of $K_r$ modulo $2^n-1$.
			\item There exists an $r$-matrix for the carry sequence $c$ of the form 
			\begin{equation*}
		M_{c,r}=\begin{pmatrix}
			c_{0,0}&c_{0,1}&c_{0,2}&\hdots&c_{0,\frac{n}{d}-1} \\
			c_{1,0}&c_{1,1}&c_{1,2}&\hdots&c_{1,\frac{n}{d}-1} \\
			\vdots& & & &\vdots \\
			c_{d-1,0}&c_{d-1,1} &c_{d-1,2}& \hdots & c_{d-1,\frac{n}{d}-1}
		\end{pmatrix}
	\end{equation*} 
			with $c_{i,j} \in \{-1,0,1\}$  such that the following equations hold:
	\begin{align} 
		2c_{0,0}-c_{d-1,e}+1&=a_{0,2}-a_{0,1}+a_{0,0} \label{eq:gcds_1} \\
		2c_{0,j}-c_{d-1,j+e}&=a_{0,j+2}-a_{0,j+1}+a_{0,j}  \text{ for all }j\in \{1,\dots,\frac{n}{d}-1\} \label{eq:gcds_2}\\
		2c_{i,j}-c_{i-1,j}&=a_{i,j+2}-a_{i,j+1}+a_{i,j} \text{ for all }i \in \{1,\dots,d-1\}, j \in \{0,\dots,\frac{n}{d}-1\}. \label{eq:gcds_3}
	\end{align}
	\end{enumerate}
	The carry sequence (and thus its associated $r$-matrix) in (b) is unique.
\end{theorem}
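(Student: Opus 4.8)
The plan is to obtain Theorem~\ref{thm:gcd>1} as a pure re-indexing of Theorem~\ref{thm:xiang} specialized to $s=1$, in exactly the way that Theorem~\ref{thm:gcd>1_gold} was derived from Theorem~\ref{thm:xiang_gold}. The starting point is the observation that the assignment $(i,j)\mapsto i-jr \pmod n$, with $i\in\{0,\dots,d-1\}$ and $j\in\{0,\dots,\frac{n}{d}-1\}$, is a bijection onto $\Z_n$: as $j$ runs over its range the values $jr$ modulo $n$ enumerate the subgroup $d\Z_n$ of order $\frac{n}{d}$ generated by $r$ (since $\gcd(r,n)=d$), and adding $i\in\{0,\dots,d-1\}$ selects one representative in each coset. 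Hence $M_{a,r}$ and $M_{c,r}$ merely rearrange the entries of the sequences $a$ and $c$ of Theorem~\ref{thm:xiang}, via $a_{i,j}=a_{i-jr}$ and $c_{i,j}=c_{i-jr}$, and I only need to rewrite the fundamental equation~\eqref{eq:fund} in these coordinates.

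Next I would translate each term of~\eqref{eq:fund} evaluated at the standard index $p=i-jr$. The right-hand side $a_{p-2r}-a_{p-r}+a_p$ becomes $a_{i,j+2}-a_{i,j+1}+a_{i,j}$, where the column index is read modulo $\frac{n}{d}$ inside the fixed row $i$; this is legitimate because replacing $j$ by $j+\frac{n}{d}$ changes the exponent by $\frac{n}{d}\cdot r=n\cdot\frac{r}{d}\equiv 0\pmod n$, so column shifts are cyclic and never leave the row. The diagonal term is simply $2c_p=2c_{i,j}$. Since $s=1$ we have $s_p=1$ exactly when $p\equiv 0$, i.e. when $i=0$ and $jr\equiv 0$; within the index ranges this occurs only for $(i,j)=(0,0)$, which accounts for the extra $+1$ in~\eqref{eq:gcds_1} and its absence in~\eqref{eq:gcds_2} and~\eqref{eq:gcds_3}.

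The only step with genuine content is locating the predecessor $c_{p-1}$ in the matrix, and this is where $e$ enters. For $i\geq 1$ we have $p-1=(i-1)-jr$, so $c_{p-1}=c_{i-1,j}$ sits one row higher in the same column, producing~\eqref{eq:gcds_3}. For $i=0$ the situation is different: since $r\equiv 0\pmod d$, the row of any standard index equals its residue modulo $d$, and $p-1=-jr-1\equiv -1\equiv d-1\pmod d$, so the predecessor lies in row $d-1$. Writing it as $(d-1)-j'r$ and solving $(d-1)-j'r\equiv -jr-1\pmod n$ yields $(j'-j)r\equiv d\pmod n$, equivalently $(j'-j)\frac{r}{d}\equiv 1\pmod{\frac{n}{d}}$; as $e$ is the inverse of $\frac{r}{d}$ modulo $\frac{n}{d}$, this forces $j'\equiv j+e\pmod{\frac{n}{d}}$. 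Hence $c_{p-1}=c_{d-1,j+e}$, which supplies the $-c_{d-1,e}$ and $-c_{d-1,j+e}$ terms of~\eqref{eq:gcds_1} and~\eqref{eq:gcds_2}. I expect this coset-predecessor identification to be the main (indeed essentially the only) obstacle; it mirrors the computation carried out in the proof of Theorem~\ref{thm:gcd>1_gold}.

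Finally, collecting the three cases $(i,j)=(0,0)$, $i=0$ with $j\geq 1$, and $i\geq 1$ reproduces exactly equations~\eqref{eq:gcds_1}--\eqref{eq:gcds_3}. Because the rewriting is a bijection between solutions of~\eqref{eq:fund} with $s=1$ and solutions of the matrix system, the equivalence (a)$\Leftrightarrow$(b) follows at once from Theorem~\ref{thm:xiang}. Uniqueness of $M_{c,r}$ is inherited directly from the uniqueness of the carry sequence $c$ in Theorem~\ref{thm:xiang}, since $M_{c,r}$ is determined by $c$ and conversely.
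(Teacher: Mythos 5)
Your proposal is correct and follows exactly the route the paper takes: the paper's proof simply cites the definition of the $r$-matrix together with the predecessor computation from the Gold case (Theorem~\ref{thm:gcd>1_gold}), which is precisely your identification $c_{p-1}=c_{d-1,j+e}$ via $(j'-j)\frac{r}{d}\equiv 1\pmod{\frac{n}{d}}$. You have merely written out in full the re-indexing that the paper leaves implicit, so there is nothing to add.
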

\begin{proof}
	The Theorem follows immediately from Theorem \ref{thm:xiang} and the definition of the $r$-matrix. The process is identical to the corresponding case for the Gold function in Theorem~\ref{thm:gcd>1_gold}.
	\qed
\end{proof}

Again, we find $M_{a,r}$ and $M_{c,r}$ such that Eq.~\eqref{eq:gcds_1}-\eqref{eq:gcds_3} hold. These verifications become quite tedious (especially since we have to distinguish several cases). However, the basic idea does not change: The $r$-matrices of the carry sequences have a visible structure that can be used to determine the inverse. It turns out that the inverse of $K_r$ on modulo $2^n-1$ with $\gcd(r,n)=d$ is closely related to the inverse of $K_{\frac{r}{d}}$ modulo $2^{\frac{n}{d}}-1$ which was already determined in the previous section.  To improve readability, we first deal with the case $\frac{n}{\gcd(n,r)}=6v+3$ for a $v \in \N_0$ separately.

\begin{proposition}\label{prop:gcds1}
	Let $n\in \N$ and $K_r$ be the $r$-th Kasami exponent with $\gcd(r,n)=d>1$ and $\frac{n}{d}=6v+3$. Let $e$ be the least positive residue of the inverse of $\frac{r}{d}$ modulo $\frac{n}{d}$. Then $K_r^{-1}\equiv \sum_{i=0}^{d-1}\sum_{j=0}^{\frac{n}{d}-1}a_{i,j}2^{i-jr} \pmod{2^n-1}$ where the values $a_{i,j}$ are the entries of the matrix $M_{a,r}$ 
			\begin{equation*}M_{a,r}=
		\begin{pmatrix}
			a_1 \\
							\vdots \\
			a_1 \\
			a_2
		\end{pmatrix},
			\end{equation*}
		 where the rows $a_1$ and $a_2$ are defined as follows:
	\begin{enumerate}[label=(\alph*)]
		\item If $e=6k+1$:
		\begin{align*}
			a_1&=(0,0,\underbrace{x_1,\dots,x_1}_{\frac{n/d-e-2}{6}\text{ -times}}, \underbrace{x_2\dots,x_2}_{k\text{ -times}},0) \\
			a_2 &= (1,x_3,\underbrace{x_4\dots,x_4}_{k\text{ -times}}).
		\end{align*}
		
		\item If $e=6k+5$:		
	\begin{align*}
			a_1&=(0,1,0,0,0,\underbrace{x_4,\dots,x_4}_{\frac{n/d-e-4}{6}\text{ -times}}, 1,1,0,0,\underbrace{x_6\dots,x_6}_{k\text{ -times}},0) \\
			a_2 &= (0,x_5,\underbrace{x_2\dots,x_2}_{k\text{ -times}}),
		\end{align*}
		\end{enumerate}
		where $x_1=(0,0,1,1,1,0)$, $x_2=(0,0,0,1,1,1)$, $x_4=(1,1,1,0,0,0)$, $x_6=(0,1,1,1,0,0)$ are sequences of length $6$, $x_3=(1,0,1,0\dots,1,0,1,0)$ is a sequence of length $\frac{n}{d}-e$ and $x_5=(0,1,0,1,\dots,0,1)$ is a sequence of length $\frac{n}{d}-e$.

	In both cases we have $\wt(K_r^{-1})=\frac{n-3d+4}{2}$.
\end{proposition}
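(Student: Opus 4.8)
The plan is to invoke Theorem~\ref{thm:gcd>1}: since condition (a) there is precisely that $a$ is the inverse of $K_r$, it suffices to exhibit a carry $r$-matrix $M_{c,r}$ with entries in $\{-1,0,1\}$ for which the displayed $M_{a,r}$ and $M_{c,r}$ together satisfy Eqs.~\eqref{eq:gcds_1}--\eqref{eq:gcds_3}; the uniqueness clause then guarantees that $M_{a,r}$ is the $r$-matrix of $K_r^{-1}$. The structural key, flagged in the paragraph preceding the statement, is that the bottom row $a_2$ of $M_{a,r}$ is exactly the $r$-ordered inverse of $K_{r/d}$ modulo $2^{n/d}-1$ obtained in Proposition~\ref{prop:gcd1_1} (note $n/d=6v+3$ is divisible by $3$, so $\gcd(e,n/d)=1$ together with the reduction to odd $e$ forces $e\equiv 1$ or $5 \pmod 6$, giving exactly subcases (a) and (b)), while the upper $d-1$ rows are the single, lower-weight sequence $a_1$. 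Accordingly I would take $M_{c,r}$ to have its bottom row equal to the $\gcd=1$ carry sequence from the proof of Proposition~\ref{prop:gcd1_1} (namely $(0,1,0,1,\dots)$ for $e=6k+1$ and $(0,0,1,0,1,\dots)$ for $e=6k+5$), with the upper $d-1$ carry rows of a simple, nearly constant shape read off from small examples, exactly as in the Gold analogue (Proposition~\ref{prop:gcds2_gold}).

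With this ansatz the verification organizes itself column by column. Fixing a column $j$, Eqs.~\eqref{eq:gcds_3} chain the carries down the column via $2c_{i,j}-c_{i-1,j}=a_{i,j+2}-a_{i,j+1}+a_{i,j}$, and Eqs.~\eqref{eq:gcds_1}--\eqref{eq:gcds_2} close the cycle by tying row $0$ to row $d-1$ under the shift $j \mapsto j+e$. Because rows $0,\dots,d-2$ of $M_{a,r}$ are all equal to $a_1$, the right-hand sides for $1 \le i \le d-2$ are the same period-three window $a_{1,j+2}-a_{1,j+1}+a_{1,j}$, so those $d-2$ equations collapse to a single relation between two adjacent, identically shaped carry rows and are handled uniformly. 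The only substantive relations are therefore the bottom vertical equation (with right-hand side built from $a_2$) and the top equation. For the former, the right-hand side is identical to the quantity $2c_j-c_{j+e}$ computed in the proof of Proposition~\ref{prop:gcd1_1}, so the possible window triples $(a_{2,j},a_{2,j+1},a_{2,j+2})$ and their values are already tabulated there and only have to be matched against the chosen bottom two carry rows.

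The two residue classes $e=6k+1$ and $e=6k+5$ must be run separately, since the building blocks $x_1,\dots,x_6$ enter differently and the boundary fragments differ; within each class the remaining work is a finite, block-by-block bookkeeping over the leading fragment, the repeated period-six blocks, and the trailing entries, checking in each block which cyclic window of the underlying period-six pattern the relevant triple realizes and confirming that the prescribed carry values never leave $\{-1,0,1\}$. The hypothesis $n/d=6v+3$ is what makes the block counts $\frac{n/d-e-2}{6}$ and $\frac{n/d-e-4}{6}$ non-negative integers and excludes any further subcase.

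Finally, the weight is a direct count over the rows of $M_{a,r}$: each of the $d-1$ copies of $a_1$ contains $\frac{n/d-3}{2}$ ones (three per period-six block), and $a_2$, being the $\gcd=1$ inverse, contains $\frac{n/d+1}{2}$ ones by Proposition~\ref{prop:gcd1_1}; hence $\wt(K_r^{-1})=(d-1)\frac{n/d-3}{2}+\frac{n/d+1}{2}=\frac{n-3d+4}{2}$. I expect the real obstacle to be none of the individual equation checks but the error-prone alignment of the $e$-shifted bottom carry row against the top row $a_1$ across block boundaries: fixing the correct upper carry rows and the exact block lengths so that the cyclic closure in Eqs.~\eqref{eq:gcds_1}--\eqref{eq:gcds_2} holds is the step where an off-by-one in a block length is most likely to hide.
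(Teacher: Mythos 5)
Your skeleton coincides with the paper's proof: invoke Theorem~\ref{thm:gcd>1}, recognize the bottom row $a_2$ together with the bottom carry row $c''$ (the alternating sequence from the proof of Proposition~\ref{prop:gcd1_1}) as the $\gcd=1$ solution for $K_{r/d}$ modulo $2^{n/d}-1$, collapse the $d-2$ identical interior rows to a single check, and count the weight row by row; your count $(d-1)\frac{n/d-3}{2}+\frac{n/d+1}{2}=\frac{n-3d+4}{2}$ is correct (the paper reaches the same number via Lemma~\ref{lem:carry}).

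The gap is the upper $d-1$ carry rows, which you leave as ``a simple, nearly constant shape \dots\ as in the Gold analogue''. That ansatz fails here. Eq.~\eqref{eq:gcds_3} for $1\le i\le d-2$ forces each upper carry entry to equal the window sum $a_{1,j+2}-a_{1,j+1}+a_{1,j}$, which alternates between $0$ and $1$ along the period-six blocks of $a_1$, so the upper rows cannot be nearly constant. More importantly, your reduction of the bottom vertical equation to the quantity $2c_j-c_{j+e}$ from Proposition~\ref{prop:gcd1_1} silently requires the penultimate carry row to be the cyclic $e$-shift of $c''$, and even then it breaks at $j=0$: Eq.~\eqref{eq:gcds_3} at $i=d-1$, $j=0$ reads $2c_{d-1,0}-c_{d-2,0}$ with no ``$+1$'', whereas the relation that the pair $(a_2,c'')$ actually satisfies there is the inhomogeneous Eq.~\eqref{eq:gcd1_1}, namely $2c_0-c_e+1=\cdots$. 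The paper's resolution --- the one genuinely nontrivial choice in the proof --- is to take every upper carry row equal to $c'=(c_e-1,c_{e+1},\dots,c_{e-1})$ (indices mod $n/d$), i.e.\ the $e$-shift of $c''$ with the column-$0$ entry decremented by $1$. The decrement transports the ``$+1$'' of Eq.~\eqref{eq:gcds_1} down the column-$0$ chain: at row $0$ it is absorbed, since $2(c_e-1)-c_e+1=c_e-1$, and at row $d-1$ it reappears as $2c_0-(c_e-1)=2c_0-c_e+1$. This is exactly what lets one quote Theorem~\ref{thm:gcd1} for the bottom row and reduces the remaining work to verifying $c_e-1=a_{1,2}-a_{1,1}+a_{1,0}$ and $c_{e+j}=a_{1,j+2}-a_{1,j+1}+a_{1,j}$ on the single row $a_1$. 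Without specifying this $c'$, the verification you outline cannot be carried out, and the shape you do suggest would not verify.
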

\begin{proof}
 \emph{Case (a):}
	The $r$-matrix of the corresponding carry sequence is
		\begin{equation*}M_{c,r}=(c_{i,j})=
		\begin{pmatrix}
			c'  \\
							\vdots  \\
							c' \\
							c'' 
		\end{pmatrix}
	\end{equation*}	
	where $c''=(c_0,\dots,c_{\frac{n}{d}-1})=(0,1,0,1,\dots,0,1,0,1,0)$ and $c'=(c_e-1,c_{e+1},\dots,c_{\frac{n}{d}-1},c_0,c_1,\dots,c_{e-1})$. 
	
Using Theorem \ref{thm:gcd>1}, we just have to verify Eq.~\eqref{eq:gcds_1}-~\eqref{eq:gcds_3}. For our choice of $M_{c,r}$, we have in Eq.~\eqref{eq:gcds_1} $2c_{0,0} - c_{d-1,e} + 1 = 2(c_e-1)-c_e+1 = c_e-1$. Similarly, in Eq.~\eqref{eq:gcds_3} we have for $0<i<d-1$ and $j=0$ the relation $2c_{i,j}-c_{i-1,j} = 2(c_e-1)-(c_e-1)=c_e-1$. From these two observations, we conclude
\begin{equation}
		c_{e}-1=a_{i,2}-a_{i,1}+a_{i,0} \text{ for all }i \in \{0,\dots,d-2\} \label{eq:gcds_1_p1}.
\end{equation}
For $j \neq 0$ we have for $i=0$ (consulting Eq.~\eqref{eq:gcds_2}) $2c_{0,j}-c_{d-1,e+j} = 2(c_{e+j})-c_{e+j} = c_{e+j}$. Looking at Eq.~\eqref{eq:gcds_3} for $j\neq 0$ and $i \neq 0$, we have $2c_{i,j}-c_{i-1,j} = 2c_{e+j}-c_{e+j} =c_{e+j}$. We conclude
\begin{equation}
		c_{e+j}=a_{i,j+2}-a_{i,j+1}+a_{i,j} \text{ for all }i \in \{0,\dots,d-1\}, j \in \{1,\dots,\frac{n}{d}-1\} \label{eq:gcds_2_p1}.
\end{equation}
Let us now consider the case $i = d-1$, $j\neq 0$. Then Eq.~\eqref{eq:gcds_3} becomes $2c_{d-1,j}-c_{d-2,j} = 2c_j-c_{e+j}$ and we get
\begin{equation}
		2c_{j}-c_{e+j} =a_{d-1,j+2}-a_{d-1,j+1}+a_{d-1,j} \text{ for all }j \in \{1,\dots,\frac{n}{d}-1\}\label{eq:gcds_3_p1}.
\end{equation}
Finally, for the case $i = d-1$ and $j = 0$ we have (again considering Eq.~\eqref{eq:gcds_3}) $2c_{d-1,0}-c_{d-2,0} = 2c_0-(c_e-1)=2c_0-c_e+1$. We conclude
\begin{equation}
		2c_{0}-c_{e}+1 =a_{d-1,2}-a_{d-1,1}+a_{d-1,0} \label{eq:gcds_4_p1}
\end{equation}

Observe that, by Proposition \ref{prop:gcd1_1}, $a_2$ is the $r$-ordered sequence of the inverse of $K_{\frac{r}{d}}$ modulo $2^{\frac{n}{d}}-1$ with the corresponding carry sequence $c''$. Theorem \ref{thm:gcd1} then shows that Eq.~\eqref{eq:gcds_3_p1} and~\eqref{eq:gcds_4_p1} are satisfied. We check Eq.~\eqref{eq:gcds_1_p1} and~\eqref{eq:gcds_2_p1} by hand. In both equations we do not consider the last row of $M_{a,r}$ and since all but the last row in $M_{a,r}$ are identical, it suffices to check the first row.

Eq.~\eqref{eq:gcds_1_p1} holds because $c_e=1$ and $a_{0,2}=a_{0,1}=a_{0,0}=0$. We check Eq.~\eqref{eq:gcds_2_p1}: If $e+j<n$ and $j$ odd, then $c_{e+j}=0$ and $(a_{0,j},a_{0,j+1},a_{0,j+2})\in \{(0,0,0),(1,1,0),(0,1,1)\}$. If $e+j<n$ and $j$ even, then $c_{e+j}=1$ and $(a_{0,j},a_{0,j+1},a_{0,j+2})\in \{(1,0,0),(1,1,1),(0,0,1)\}$. If $e+j \geq n$ and $j$ is odd then $c_{e+j}=1$ and $(a_{0,j},a_{0,j+1},a_{0,j+2})\in \{(1,0,0),(1,1,1),(0,0,1)\}$ and if $e+j \geq n$ and $j$ is even then $c_{e+j}=0$ and $(a_{0,j},a_{0,j+1},a_{0,j+2})\in \{(0,0,0),(1,1,0),(0,1,1)\}$. \\

 \emph{Case (b):} 
The proof is similar to the proof of the first case. We define the $r$-matrix of the corresponding carry sequence
		\begin{equation*}M_{c,r}=(c_{i,j})=
		\begin{pmatrix}
			c'  \\
							\vdots  \\
							c'  \\
							c'' 
		\end{pmatrix}
	\end{equation*}	
where $c''=(c_0,\dots,c_{\frac{n}{d}-1})=(0,0,1,0,1,0,1,\dots,0,1,0,1)$ and $c'=(c_e-1,c_{e+1},\dots,c_{\frac{n}{d}-1},c_0,c_1,\dots,c_{e-1})$. This leads to precisely the same equations~\eqref{eq:gcds_1_p1}-\eqref{eq:gcds_4_p1}. Again, by Proposition \ref{prop:gcd1_1}, $a_2$ and $c''$ are the $r$-ordered sequences of the inverse of the Kasami exponent $K_{\frac{r}{d}}$ modulo $2^{\frac{n}{d}}-1$ and the corresponding carry sequence, respectively. The validity of Eq.~\eqref{eq:gcds_3_p1} and~\eqref{eq:gcds_4_p1} follows. Equations~\eqref{eq:gcds_1_p1} and~\eqref{eq:gcds_2_p1} can be checked just as in the previous case; we omit the calculations.

By adding all entries in $M_{c,r}$, we see that in both cases the weight of the carry sequence is $d\frac{n/d-3}{2}+1=\frac{n-3d+2}{2}$. Lemma \ref{lem:carry} then implies $\wt(K_r^{-1})=\frac{n-3d+4}{2}$. 
\qed
\end{proof}
Note that the case $e=6k+3$ does not occur because $e$ is invertible modulo $\frac{n}{d}=6v+3$. We now deal with the remaining cases $\frac{n}{d}=6v+1$ and $\frac{n}{d}=6v+5$. 

\begin{proposition} \label{prop:gcds2}
	Let $n\in \N$ and $K_r$ be the $r$-th Kasami exponent with $\gcd(r,n)=d$ and $\frac{n}{d}$ odd. Let $e$ be the least positive residue of the inverse of $\frac{r}{d}$ modulo $\frac{n}{d}$ and $\frac{n}{d}=se+t$, $0 \leq t <e$. Then $K_r^{-1}\equiv \sum_{i=0}^{d-1}\sum_{j=0}^{\frac{n}{d}-1}a_{i,j}2^{i-jr} \pmod{2^n-1}$ where the values $a_{i,j}$ are the entries of the matrix
	\begin{equation*}M_{a,r}=
		\begin{pmatrix}
			a_1 \\
			a_2 \\
			\vdots \\
			a_2
		\end{pmatrix}.
	\end{equation*}
	Here, $a_1$ is the sequence of the inverse of $K_{\frac{r}{d}}$ modulo $2^{\frac{n}{d}}-1$ in $r$-ordering as determined in the previous section and $a_2$ is as follows. We use the auxiliary sequences $x_1=(0,0,0,1,1,1)$, $x_2=(1,1,0,0,0,1)$, $x_3=(0,1,1,1,0,0)$ of length $6$ and 
	\begin{align*}
	y&=(0,0,0,\underbrace{x_3,\dots,x_3}_{k \text{-times}},0,1,1,\underbrace{x_1,\dots,x_1}_{k \text{-times}}) \\
	z&=(0,1,1,\underbrace{x_1,\dots,x_1}_{k \text{-times}},0,0,0,\underbrace{x_3,\dots,x_3}_{k \text{-times}})
	\end{align*}
	of length $12k+6$.
	\begin{enumerate}[label=(\alph*)]
		\item If $e=6k+1$ and $\frac{n}{d}=6v+1$
		\begin{equation*}
			a_2=(\underbrace{x_1,\dots,x_1}_{v \text{-times}},0).
		\end{equation*}
		\item If $e=6k+1$ and $\frac{n}{d}=6v+5$
		\begin{equation*}
			a_2=(\underbrace{x_2,\dots,x_2}_{v \text{-times}},1,1,0,0,0).
		\end{equation*}
		\item If $e=6k+5$ and $\frac{n}{d}=6v+1$
		\begin{equation*}
			a_2=(0,\underbrace{x_1,\dots,x_1}_{v \text{-times}}).
		\end{equation*}
		\item If $e=6k+5$ and $\frac{n}{d}=6v+5$
		\begin{equation*}
			a_2=(0,1,1,\underbrace{x_1,\dots,x_1}_{v \text{-times}},0,0).
		\end{equation*}
		\item If $e=6k+3$ and $t=6u+1$
		\begin{equation*}
			a_2=(\underbrace{x_1,\dots,x_1}_{u \text{-times}},\underbrace{y,\dots,y}_{\frac{s}{2} \text{-times}},0).
		\end{equation*}
		\item If $e=6k+3$ and $t=6u+2$
		\begin{equation*}
			a_2=(0,1,\underbrace{x_1,\dots,x_1}_{u \text{-times}},\underbrace{y,\dots,y}_{\frac{s-1}{2} \text{-times}},0,0,0,\underbrace{x_3,\dots,x_3}_{k \text{-times}}).
		\end{equation*}
		\item If $e=6k+3$ and $t=6u+4$
		\begin{equation*}
			a_2=(0,0,0,\underbrace{x_3,\dots,x_3}_{u \text{-times}},\underbrace{z,\dots,z}_{\frac{s-1}{2} \text{-times}},0,1,1,\underbrace{x_1,\dots,x_1}_{u \text{-times}},0).
		\end{equation*}
		\item If $e=6k+3$ and $t=6u+5$
		\begin{equation*}
			a_2=(0,1,1,0,0,\underbrace{x_3,\dots,x_3}_{u \text{-times}},\underbrace{z,\dots,z}_{\frac{s}{2} \text{-times}}).
		\end{equation*}
	\end{enumerate}
	In the cases (a)-(d) we have $\wt(K_r^{-1})=\frac{n-d+2}{2}$, in the cases (e) and (h) $\wt(K_r^{-1})=\frac{n-d(s+1)+2}{2}$ and in cases (f) and (g) $\wt(K_r^{-1})=\frac{n-d(s+2)+2}{2}$.
\end{proposition}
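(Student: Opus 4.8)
The plan is to mirror the proof of Proposition~\ref{prop:gcds1}, exploiting the same principle: one row of the $r$-matrix $M_{a,r}$ is an inverse already computed in the previous subsection, so the equations governing that row come for free, and only the remaining (identical) rows require hand verification. The crucial structural observation is that here the ``inverse row'' $a_1$ sits in the \emph{top} row ($i=0$), whose carry predecessor in Theorem~\ref{thm:gcd>1} is $c_{d-1,j+e}$, i.e. the bottom row shifted by $e$. Since the $d=1$ recurrence of Theorem~\ref{thm:gcd1} already relates $c_j$ to $c_{j+e}$, this shift is exactly what we need, and no manual shifting of the carry (as was necessary in Proposition~\ref{prop:gcds1}, where the inverse occupied the \emph{bottom} row) is required. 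I would therefore simply take the carry $r$-matrix $M_{c,r}$ to have all $d$ rows equal to $c''$, where $c''$ is the $r$-ordered carry sequence of $K_{r/d}^{-1}$ modulo $2^{n/d}-1$ furnished by Proposition~\ref{prop:gcd1_1} (in cases (a)--(d), where $e=6k+1$ or $e=6k+5$) or by Proposition~\ref{prop:gcd1_2} (in cases (e)--(h), where $e=6k+3$). As all entries of $c''$ lie in $\{-1,0,1\}$, so do all entries of $M_{c,r}$.

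With this choice, the verification splits cleanly. For the top row, Equations~\eqref{eq:gcds_1} and~\eqref{eq:gcds_2} become $2c''_0-c''_e+1=a_{0,2}-a_{0,1}+a_{0,0}$ and $2c''_j-c''_{j+e}=a_{0,j+2}-a_{0,j+1}+a_{0,j}$, which are precisely Equations~\eqref{eq:gcd1_1} and~\eqref{eq:gcd1_2} for the pair $(a_1,c'')$; these hold by the cited propositions since $a_1=K_{r/d}^{-1}$. For every row $i\in\{1,\dots,d-1\}$, Equation~\eqref{eq:gcds_3} reduces, because both $c_{i,j}$ and $c_{i-1,j}$ equal $c''_j$, to the single relation $c''_j=a_{2,j+2}-a_{2,j+1}+a_{2,j}$ for all $j\in\Z_{n/d}$. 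Thus the entire proposition rests on checking this one recurrence, case by case, for the explicit sequences $a_2$ and $c''$ given in each of (a)--(h).

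I expect this last verification to be the main obstacle, and the bulk of the work. The auxiliary blocks $x_1,x_2,x_3,y,z$ are precisely engineered so that $a_{2,j+2}-a_{2,j+1}+a_{2,j}$ reproduces the carry pattern: for instance in case (a), $x_1=(0,0,0,1,1,1)$ yields the alternating values $0,1,0,1,\dots$ matching $c''$ for $e=6k+1$, while in the $e=6k+3$ cases (e)--(h) the blocks must reproduce the more intricate carry built from $s_1$, $s_2$ and (for case (f)) an isolated $-1$. The delicate points are the seams between consecutive blocks and, above all, the wraparound of the indices modulo $n/d$ at the junction of the last and first entries of $a_2$; the trailing and leading zeros in each $a_2$ are chosen exactly to make these boundary instances of the recurrence close up. This is routine but tedious, and as in Proposition~\ref{prop:gcds1} I would present one representative case in full and record the remaining carry sequences, leaving the analogous checks to the reader.

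Finally, uniqueness of the carry sequence in Theorem~\ref{thm:gcd>1} guarantees that the pair $(M_{a,r},M_{c,r})$ we have exhibited does indeed encode the inverse. The weight is then immediate from Lemma~\ref{lem:carry}(b) rather than by recounting: since $M_{c,r}$ consists of $d$ copies of $c''$ we have $\wt(c)=d\,\wt(c'')$, and applying Lemma~\ref{lem:carry}(b) to the $d=1$ situation gives $\wt(c'')=\wt(a_1)-1$, whence $\wt(K_r^{-1})=\wt(c)+1=d\,\wt(a_1)-d+1$. Substituting $\wt(a_1)=\frac{n/d+1}{2}$ in cases (a)--(d), $\wt(a_1)=\frac{n/d-s+1}{2}$ in cases (e),(h), and $\wt(a_1)=\frac{n/d-s}{2}$ in cases (f),(g) (the values from Propositions~\ref{prop:gcd1_1} and~\ref{prop:gcd1_2}) yields the three stated weights.
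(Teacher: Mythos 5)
Your proposal is correct and follows essentially the same route as the paper's proof: the carry $r$-matrix is taken to have all $d$ rows equal to the $r$-ordered carry sequence of $K_{r/d}^{-1}$ modulo $2^{n/d}-1$, the top-row equations then reduce to the already-verified recurrence of Theorem~\ref{thm:gcd1}, the remaining rows reduce to the single relation $c''_j=a_{2,j+2}-a_{2,j+1}+a_{2,j}$ checked case by case (with one representative case done in full), and the weights follow from Lemma~\ref{lem:carry}(b) via $\wt(K_r^{-1})=d(\wt(K_{r/d}^{-1})-1)+1$. Nothing essential differs from the paper's argument.
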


\begin{proof}
	In all cases the $r$-matrix of the carry sequence $c$ has identical rows, i.e.
	\begin{equation*}
		M_{c,r}=\begin{pmatrix}
			c' \\
			\vdots \\
			c'
		\end{pmatrix},
	\end{equation*}
	where $c'=(c_0,\dots,c_{\frac{n}{d}-1})$ is the $r$-ordered carry sequence for the inverse of $K_{\frac{r}{d}}$ modulo $2^{\frac{n}{d}}-1$ determined in the proofs of Propositions \ref{prop:gcd1_1} and \ref{prop:gcd1_2}. With this carry sequence, the equations~\eqref{eq:gcds_1}-\eqref{eq:gcds_3} of Theorem \ref{thm:gcd>1} take on the following form:
	\begin{align} 
		2c_{0}-c_{e}+1&=a_{0,2}-a_{0,1}+a_{0,0} \label{eq:gcds_1_p2} \\
		2c_{j}-c_{j+e}&=a_{0,j+2}-a_{0,j+1}+a_{0,j}  \text{ for all }j\in \{1,\dots,\frac{n}{d}-1\} \label{eq:gcds_2_p2}\\
		c_{j}&=a_{i,j+2}-a_{i,j+1}+a_{i,j} \text{ for all }i \in \{1,\dots,d-1\}, j \in \{0,\dots,\frac{n}{d}-1\}. \label{eq:gcds_3_p2}
	\end{align}
The validity of Eq.~\eqref{eq:gcds_1_p2} and~\eqref{eq:gcds_2_p2} follows from Theorem \ref{thm:gcd1} and the choice of $a_1$ and $c'$. So we only need to verify Eq.~\eqref{eq:gcds_3_p2} for each case. We will show the verification for the first case, the other cases are identical in nature.

In Case (a) we have $c'=(0,1,0,1,\dots,0,1,0)$ from Proposition \ref{prop:gcd1_1}, i.e. $c_j$ is $0$ if $j$ is even and $1$ of $j$ is odd. When $j$ is odd, then $(a_{i,j},a_{i,j+1},a_{i,j+2})\in \{(0,0,1),(1,1,1),(1,0,0)\}$ and if $j$ is even then  $(a_{i,j},a_{i,j+1},a_{i,j+2})\in \{(0,0,0),(1,1,0),(0,1,1)\}$ for all $i>0$, so Eq.~\eqref{eq:gcds_3_p2} holds.

Using Lemma \ref{lem:carry}, we have 
\begin{equation*}
	\wt(K_r^{-1})=\wt(c)+1=d\wt(c')+1=d(\wt(K_{\frac{r}{d}}^{-1})-1)+1,
\end{equation*}
 where $K_{\frac{r}{d}}^{-1}$ is the least positive residue of the inverse of $K_{\frac{r}{d}}$ modulo $2^{\frac{n}{d}}-1$. The results on the binary weights then follow from the results in Propositions~\ref{prop:gcd1_1} and \ref{prop:gcd1_2}. For example for the cases (a)-(d), Proposition~\ref{prop:gcd1_1} yields $\wt(K_{\frac{r}{d}}^{-1})=\frac{\frac{n}{d}+1}{2}$. This leads to $\wt(K_r^{-1})=d\frac{\frac{n}{d}-1}{2}+1=\frac{n-d+2}{2}$.
\qed
\end{proof}

Propositions~\ref{prop:gcds1} and \ref{prop:gcds2} show that $K_r^{-1}$ has a strong structure because its $r$-matrix has $d-1$ identical rows. By the definition of the $r$-matrix, this means that $K_r^{-1}$ has $\frac{n}{d}$ runs of $(d-1)$ consecutive ones or zeroes.

The results presented in this section yield the following result for the binary weight of the inverse of Kasami exponents.

 \begin{corollary} \label{cor:algdeg2}
	Let $n \in \N$ and $K_r$ be the $r$-th Kasami exponent with $\gcd(n,r)=d$ and $\frac{n}{d}$ odd. Let $K_r^{-1}$ be the inverse of $K_r$ modulo $2^n-1$. Then $\wt(K_r^{-1})=\frac{n-3d+4}{2}$ for $n \equiv 0 \pmod 3$ and $\wt(K_r^{-1})\leq \frac{n-d+2}{2}$ for $n \not\equiv 0 \pmod 3$. Moreover, we have 
	\begin{equation*}
	\wt(K_r^{-1})\geq \begin{cases} \frac{n-d+3}{3} & \text{ if }\frac{n}{d}\equiv 1 \pmod 3 \\
													\frac{n-2d+3}{3} & \text{ if }\frac{n}{d}\equiv 2 \pmod 3.
									\end{cases}
	\end{equation*}
\end{corollary}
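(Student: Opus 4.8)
The plan is to extract everything from Propositions~\ref{prop:gcds1} and~\ref{prop:gcds2}, which together give the exact weight of $K_r^{-1}$ for every invertible Kasami exponent with $\frac{n}{d}$ odd, and then to read off the three assertions by a short optimization over the finitely many case formulas. The parameter that actually governs the trichotomy is $\frac{n}{d}\bmod 3$; note that $e=6k+3$ (the hard sub-cases) can occur only when $3\nmid\frac{n}{d}$, since $e$ is invertible modulo $\frac{n}{d}$. The exact-value assertion concerns the regime $3\mid\frac{n}{d}$, i.e.\ $\frac{n}{d}=6v+3$: here Proposition~\ref{prop:gcds1} applies verbatim and yields $\wt(K_r^{-1})=\frac{n-3d+4}{2}$, with nothing further to prove.

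For $\frac{n}{d}\not\equiv 0\pmod 3$ every weight comes from Proposition~\ref{prop:gcds2}. I would first settle the upper bound $\wt(K_r^{-1})\le\frac{n-d+2}{2}$: cases (a)--(d) (these are exactly the cases $3\nmid e$) give equality, while cases (e)--(h) give $\frac{n-d(s+\delta)+2}{2}$ with $\delta\in\{1,2\}$ and $s\ge 1$, the latter because $\frac{n}{d}=se+t$ with $0\le t<e\le\frac{n}{d}-1$ forces $s\ge 1$. Since $d(s+\delta)\ge 2d>d$, these weights are strictly smaller, so the overall maximum is $\frac{n-d+2}{2}$, attained precisely when $3\nmid e$.

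For the lower bounds I would minimize $\wt(K_r^{-1})=\frac{n-d(s+\delta)+2}{2}$, equivalently maximize $s+\delta$, over the admissible triples $(e,s,t)$ with $e=6k+3$ and $\frac{n}{d}=se+t$. Because $3\mid e$ we get $t\equiv\frac{n}{d}\pmod 3$, so for $\frac{n}{d}\equiv 1$ only sub-cases (e) ($t=6u+1,\ \delta=1$) and (g) ($t=6u+4,\ \delta=2$) arise, and for $\frac{n}{d}\equiv 2$ only (f) ($t=6u+2,\ \delta=2$) and (h) ($t=6u+5,\ \delta=1$). In each regime the smaller-$\delta$ sub-case can reach $e=3$ (with $t=1$, resp.\ $t=2$), giving $s+\delta=\frac{n/d+2}{3}$, resp.\ $\frac{n/d+4}{3}$, and substituting back produces exactly $\frac{n-d+3}{3}$, resp.\ $\frac{n-2d+3}{3}$.

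The \emph{main obstacle} is proving that these $e=3$ configurations really maximize $s+\delta$, rather than merely checking one candidate. The larger-$\delta$ sub-cases (g) and (h) require $t\ge 4$, resp.\ $t\ge 5$, hence $e\ge 9$, so there $s+\delta\le\frac{n/d+14}{9}$, resp.\ $\le\frac{n/d+4}{9}$; the required dominations reduce to $\frac{n/d+2}{3}>\frac{n/d+14}{9}$ and $\frac{n/d+4}{3}>\frac{n/d+4}{9}$. The first holds because $\frac{n}{d}=6v+1\ge 7$ (the value $\frac{n}{d}=1$ is excluded since $d=\gcd(r,n)<n$), the second holds for every positive $\frac{n}{d}$. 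Monotonicity of $s$ in $e$, once the residue class of $t$ is fixed, then rules out any intermediate $e$ beating $e=3$, completing the minimization.
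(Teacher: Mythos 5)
Your proposal is correct and follows essentially the same route as the paper: read the exact weights off Propositions~\ref{prop:gcds1} and~\ref{prop:gcds2}, observe that the cases with $3\mid e$ are the only ones that can beat $\frac{n-d+2}{2}$, and minimize by taking $e=3$ so that $s$ is maximal; your version is in fact more careful than the paper's, which simply asserts that minimality forces $e=3$ and quotes cases (e) and (f), whereas you actually verify the domination inequalities against the sub-cases requiring $e\ge 9$ and justify the upper bound explicitly. One small mislabel: for $\frac{n}{d}\equiv 2\pmod 3$ the configuration $e=3$, $t=2$ lands in sub-case (f) (with $\delta=2$), not in the smaller-$\delta$ sub-case (h) (whose $t=6u+5$ cannot equal $2$); your arithmetic $s+\delta=\frac{n/d+4}{3}$ and the resulting bound $\frac{n-2d+3}{3}$ already correspond to case (f), so the conclusion stands, but the sentence attributing it to the smaller-$\delta$ case should be corrected.
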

\begin{proof}
For $n \equiv 0 \pmod 3$ the result follows from Proposition \ref{prop:gcds1}.

	For the other cases, using the notation of Proposition \ref{prop:gcds2}, the binary weight $\wt(K_r^{-1})$ is minimal when $e$ is divisible by $3$ and $s$ is maximal. For $n/d=se+t$ with $0 <t<e$ this clearly implies minimizing $e$, so $e=3$ and $t \in \{1,2\}$. With Case (e) and (f) from Proposition \ref{prop:gcds2}, we have
	\begin{equation*}
		\wt(K_r^{-1})\geq\begin{cases} \frac{1}{2}(n-\frac{n+2d}{3}+2)=\frac{n-d+3}{3} & \text{ if }t=1 \\
													\frac{1}{2}(n-\frac{n+4d}{3}+2)=\frac{n-2d+3}{3} & \text{ if }t=2
									\end{cases}
	\end{equation*}
	and the result follows.
	\qed
\end{proof}

\subsection{The case $\frac{n}{\gcd(n,r)}$ even}

We now deal with the case $\frac{n}{\gcd(n,r)}$ even. Proposition~\ref{prop:invexist} implies that if $K_r$ is invertible modulo $2^n-1$ then both $n$ and $r$ are even and $\frac{n}{\gcd(n,r)}$ is not divisible by $3$. We will again denote by $e$ the inverse of $\frac{r}{\gcd(n,r)}$ modulo $\frac{n}{\gcd(n,r)}$. Note that since  $\frac{n}{\gcd(n,r)}$ is even, $e$ must be odd. 

	\begin{proposition} \label{prop:gcds3}
	Let $n\in \N$ and $K_r$ be the $r$-th Kasami exponent with $\gcd(r,n)=d$, $r$ even, $\frac{n}{d}$ even and not divisible by $3$. Then $K_r^{-1}\equiv \sum_{i=0}^{d-1}\sum_{j=0}^{\frac{n}{d}-1}a_{i,j}2^{i-jr} \pmod{2^n-1}$ where the values $a_{i,j}$ are the entries of the matrix
	\begin{equation*}M_{a,r}=
		\begin{pmatrix}
			a_1 \\
			x \\
			y \\
			\vdots \\
			x\\
			y\\
			x
		\end{pmatrix}.
	\end{equation*}
	where $a_1,x,y$ are as follows. We use the auxiliary sequences $x_1=(1,1,0,0,0,1)$ and $x_2=(1,0,0,0,1,1)$ of length 6.
	\begin{enumerate}[label=(\alph*)]
		\item If $\frac{n}{d}=6k+2$ then
		\begin{equation*}
			a_1=(1,1,\underbrace{x_1,\dots,x_1}_{k \text{ -times}}), x=(1,0,1,0,\dots,1,0), y=(0,1,0,1,\dots,0,1).
		\end{equation*}
		\item If $\frac{n}{d}=6k+4$ then
		\begin{equation*}
			a_1=(1,0,1,1,\underbrace{x_2,\dots,x_2}_{k \text{ -times}}), x=(0,1,0,1,\dots,0,1), y=(1,0,1,0,\dots,1,0).
		\end{equation*}
	\end{enumerate}
	In both cases we have $\wt(K_r^{-1})=\frac{n+2}{2}$.
\end{proposition}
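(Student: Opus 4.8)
The plan is to follow the template used throughout this section: exhibit an explicit carry matrix $M_{c,r}$, check that the equations \eqref{eq:gcds_1}--\eqref{eq:gcds_3} of Theorem~\ref{thm:gcd>1} are satisfied by the pair $(M_{a,r},M_{c,r})$, and then read off the weight. A useful preliminary remark is that $d=\gcd(r,n)$ is even: both $r$ and $n=d\cdot\frac{n}{d}$ are even, so $d$ is even, and hence the alternating list of rows $x,y,\dots,x$ has odd length $d-1$, which is exactly why it both starts and ends with $x$. Note also that $e$ is odd here (this is forced once $\frac{n}{d}$ is even), a fact that will drive the entire verification. In contrast to the case $\frac{n}{d}$ odd treated in Proposition~\ref{prop:gcds2}, the exponent $K_{r/d}$ need not be invertible modulo $2^{n/d}-1$, so there is no known inverse to place in the first row $a_1$; the carry matrix must be built directly.

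First I would write down $M_{c,r}$. Since $x$ and $y$ are complementary alternating sequences, the right-hand side $a_{i,j+2}-a_{i,j+1}+a_{i,j}$ of \eqref{eq:gcds_3} takes only the values $2$ and $-1$ on the interior rows $1,\dots,d-1$, alternating with the parity of $j$ and with the row type. Solving the vertical recurrence $2c_{i,j}-c_{i-1,j}=a_{i,j+2}-a_{i,j+1}+a_{i,j}$ column by column down the rows then forces a checkerboard pattern: in case (a) the rows $0,2,\dots,d-2$ carry $(0,1,0,1,\dots)$ and the rows $1,3,\dots,d-1$ carry $(1,0,1,0,\dots)$, while in case (b) these two patterns are swapped. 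In particular every carry entry lies in $\{0,1\}\subseteq\{-1,0,1\}$, so the admissibility constraint on the $c_{i,j}$ is automatic.

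With $M_{c,r}$ fixed, the interior equations \eqref{eq:gcds_3} hold by construction of the checkerboard, so the substance lies in the top-row equations \eqref{eq:gcds_1} and \eqref{eq:gcds_2}, where the wraparound from row $d-1$ back to row $0$ is shifted cyclically by $e$. The hard part will be exactly this matching: the shift by the odd number $e$ flips the parity of the column index, so $c_{d-1,j+e}$ takes precisely the value needed to reproduce the right-hand side coming from the distinguished head $a_1=(1,1,x_1,\dots,x_1)$ in case (a), respectively $a_1=(1,0,1,1,x_2,\dots,x_2)$ in case (b). This reduces to a finite check that is periodic with period $6$ inside each $x_1$- or $x_2$-block, together with a short separate computation at the seam where $a_1$ wraps around; it is the only point at which the special first row and the cyclic shift $e$ interact, and hence the only genuinely delicate step.

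Finally, the weight follows at once. Each of the $d$ rows of the checkerboard $M_{c,r}$ has weight $\frac{n}{2d}$, so $\wt(c)=d\cdot\frac{n}{2d}=\frac{n}{2}$, and Lemma~\ref{lem:carry}(b) gives $\wt(K_r^{-1})=\wt(c)+1=\frac{n+2}{2}$. The same value can be confirmed independently by counting the ones of $M_{a,r}$ block by block in each of the two cases.
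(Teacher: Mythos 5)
Your proposal matches the paper's proof essentially step for step: the paper exhibits exactly the checkerboard carry matrices you describe (rows alternating $(0,1,0,1,\dots)$ and $(1,0,1,0,\dots)$, with the two patterns swapped between cases (a) and (b)), and verifies Eq.~\eqref{eq:gcds_1}--\eqref{eq:gcds_3} by the same finite parity analysis you outline, with $e$ odd driving the wraparound check in the top-row equations. The only cosmetic difference is that you deduce $\wt(K_r^{-1})=\tfrac{n+2}{2}$ from $\wt(c)=\tfrac{n}{2}$ via Lemma~\ref{lem:carry}, while the paper simply counts the ones of $M_{a,r}$; both give the same value.
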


\begin{proof}
	\emph{Case (a)}: The $r$-matrix of the carry sequence is 
	\begin{equation*}M_{c,r}=(c_{i,j})=
		\begin{pmatrix}
			 0&1&0&1&\hdots&0&1\\
			 1&0&1&0&\hdots&1&0\\
			\vdots& & & & & &  \vdots \\
			 0&1&0&1&\hdots&0&1\\
			 1&0&1&0&\hdots&1&0
		\end{pmatrix}.
	\end{equation*}
	We check Eq.~\eqref{eq:gcds_1}-\eqref{eq:gcds_3} from Theorem \ref{thm:gcd>1}. 
	
	Eq.~\eqref{eq:gcds_1} holds because $c_{0,0}=0$, $c_{d-1,e}=0$ (recall that $e$ is odd) and $a_{0,2}=a_{0,1}=a_{0,0}=1$.
	
	We verify Eq.~\eqref{eq:gcds_2}: If $j$ is odd then $c_{0,j}=c_{d-1,j+e}=1$ and $(a_{0,j},a_{0,j+1},a_{0,j+2}) \in \{(1,0,0),(1,1,1),(0,0,1)\}$. If $j>0$ is even, then $c_{0,j}=c_{d-1,j+e}=0$ and $(a_{0,j},a_{0,j+1},a_{0,j+2}) \in \{(1,1,0),(0,1,1),(0,0,0)\}$.
	
	Lastly, we verify Eq.~\eqref{eq:gcds_3}: If $i+j$ is even then $c_{i,j}=0$, $c_{i-1,j}=1$, $a_{i,j+2}=a_{i,j}=0$ and $a_{i,j+1}=1$. If $i+j$ is odd then $c_{i,j}=1$, $c_{i-1,j}=0$, $a_{i,j+2}=a_{i,j}=1$ and $a_{i,j+1}=0$.\\
	
	\emph{Case (b)}: In this case, the $r$-matrix of the carry sequence is 
		\begin{equation*}M_{c,r}=(c_{i,j})=
		\begin{pmatrix}
			 1&0&1&0&\hdots&1&0\\
			 0&1&0&1&\hdots&0&1\\
			\vdots& & & & & &  \vdots \\
			 1&0&1&0&\hdots&1&0\\
			 0&1&0&1&\hdots&0&1
		\end{pmatrix}.
	\end{equation*}
	Eq.~\eqref{eq:gcds_1} is valid since $c_{0,0}=1$, $c_{d-1,e}=1$ and $a_{0,0}=a_{0,2}=1$ and $a_{0,1}=0$. The verification process for Eq.~\eqref{eq:gcds_2} and 
	\eqref{eq:gcds_3} is identical to Case (a) with odd and even swapped.
	\qed
\end{proof}

\subsection{Kasami inverses with special structure}
We now investigate cases where the inverses of Kasami exponents have some special structure. These cases will also illustrate the results in the previous sections and show how to get from the representation using $r$-matrices to the ``usual'' binary representation.

 In  \cite[Proposition 3.13]{FFA:KyuSud14}, it was shown that the inverse of $K_r$ modulo $2^{5r}-1$ is cyclotomic equivalent to the Kasami exponent $K_{2r}$. It was conjectured that $K_r^{-1}$ modulo $ 2^{\frac{5r}{b}}-1$ for $b|r$ and $5 \nmid b$ is always cyclotomic equivalent to a Kasami exponent. This conjecture can be proven using Proposition~\ref{prop:gcds2}. 

\begin{proposition} \label{prop:conjecture}
	Let $d=\frac{r}{b}$ with $b|r$, $n=5d$ and $K_r^{-1}$ be the least positive residue of the inverse of $K_r$ modulo $2^n-1$. Then 
	\begin{equation*}
		K_r^{-1} \equiv \begin{cases}
				2^{2d}K_{2d} \pmod {2^n-1} & \text{if }b\equiv 1 \pmod 5 \\
				2^{2d}K_{d} \pmod {2^n-1} & \text{if }b\equiv 2 \pmod 5 \\
				2^{2(d-r)}K_{d} \pmod {2^n-1} & \text{if }b\equiv 3 \pmod 5\\
				2^{2(d-r)}K_{2d} \pmod {2^n-1} & \text{if }b\equiv 4 \pmod 5.
		\end{cases}
	\end{equation*}

\end{proposition}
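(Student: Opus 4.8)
The plan is to reduce everything to the already-solved case $\frac{n}{d}=5$ of Proposition~\ref{prop:gcds2}. First I would record the basic arithmetic forced by the hypotheses. Since the four listed cases cover exactly $5\nmid b$, we have $\gcd(r,n)=\gcd(bd,5d)=d$, so that $\frac{n}{d}=5$ is odd and Proposition~\ref{prop:gcds2} is applicable with this $d$. The parameter $e$ governing that proposition is the least positive residue of the inverse of $\frac{r}{d}=b$ modulo $\frac{n}{d}=5$, i.e. $e\equiv b^{-1}\pmod 5$; computing in $\Z_5$ gives $e=1,3,2,4$ for $b\equiv 1,2,3,4\pmod 5$ respectively. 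Thus $e$ is odd exactly for $b\equiv 1,2$, and only in those two cases may I invoke Proposition~\ref{prop:gcds2} directly (recall the reduction to odd $e$); the two even cases will be handled afterward by cyclotomic equivalence.

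For $b\equiv 1$ we land in case (b) of Proposition~\ref{prop:gcds2} (with $e=1$, $v=0$) and for $b\equiv 2$ in case (f) (with $e=3$, $s=1$, $u=k=0$). In both situations every ``variable'' block ($x_1,y,z,\dots$) is empty because $u=k=v=0$, so the prescribed $r$-matrix $M_{a,r}$ degenerates to a distinguished top row $a_1$ (read off from Propositions~\ref{prop:gcd1_1}/\ref{prop:gcd1_2}) sitting above $d-1$ copies of a short row $a_2$. Concretely I get $a_1=(1,1,0,1,0)$, $a_2=(1,1,0,0,0)$ for $b\equiv1$ and $a_1=(0,1,0,0,1)$, $a_2=(0,1,0,0,0)$ for $b\equiv2$. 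I would then evaluate $K_r^{-1}\equiv\sum_{i,j}a_{i,j}2^{i-jr}$ by summing the geometric series over the repeated rows, $\sum_{i=1}^{d-1}2^i=2^d-2$, which combines with the top row into a clean closed form, namely $K_r^{-1}\equiv(2^d-1)(1+2^{-r})+2^{-3r}$ for $b\equiv1$ and $K_r^{-1}\equiv(2^d-1)2^{-r}+2^{-4r}$ for $b\equiv2$. The last step uses that exponents only matter modulo $n=5d$ together with the residue of $b$: for $b\equiv1$ one has $2^{-r}\equiv 2^{4d}$ and $2^{-3r}\equiv 2^{2d}$, and for $b\equiv2$ one has $2^{-r}\equiv 2^{3d}$ and $2^{-4r}\equiv 2^{2d}$. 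After reducing $2^{5d}\equiv 1$ these collapse to $K_r^{-1}\equiv 2^{d}-2^{4d}+2^{2d}\equiv 2^{2d}K_{2d}$ and $K_r^{-1}\equiv 2^{4d}-2^{3d}+2^{2d}\equiv 2^{2d}K_{d}$ respectively, which are exactly the first two claimed identities.

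For the remaining cases I would exploit the cyclotomic equivalence $K_r\equiv 2^{2r}K_{n-r}\pmod{2^n-1}$ recorded earlier, which gives $K_r^{-1}\equiv 2^{-2r}K_{n-r}^{-1}$. Here $\gcd(n-r,n)=d$ and $\frac{n-r}{d}\equiv -b\pmod 5$, so the case $b\equiv 3$ turns $K_{n-r}$ into an instance of residue $\equiv 2\pmod 5$ and the case $b\equiv 4$ into one of residue $\equiv 1\pmod 5$, both solved in the previous paragraph (and both with odd associated $e$, so the direct argument applies). Substituting $K_{n-r}^{-1}\equiv 2^{2d}K_d$ and $K_{n-r}^{-1}\equiv 2^{2d}K_{2d}$ respectively and multiplying by $2^{-2r}$ yields $K_r^{-1}\equiv 2^{2(d-r)}K_d$ and $K_r^{-1}\equiv 2^{2(d-r)}K_{2d}$, completing all four cases.

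The main obstacle is purely the bookkeeping of the second paragraph: one must correctly instantiate the parameter-heavy formulas of Propositions~\ref{prop:gcds2}, \ref{prop:gcd1_1} and \ref{prop:gcd1_2} at the degenerate values $u=k=v=0$, assemble the $r$-matrix sum, and keep careful track of all exponents modulo $n=5d$ when replacing the powers $2^{-jr}$ by the corresponding powers of $2^{d}$. None of this is deep, but it is where a sign or index slip is most likely; once the two odd-$e$ identities are verified, the cyclotomic reduction for $b\equiv 3,4$ is essentially automatic.
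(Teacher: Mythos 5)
Your proposal is correct and follows essentially the same route as the paper: both reduce to Proposition~\ref{prop:gcds2} with $\frac{n}{d}=5$, identify $e=1$ (case (b)) and $e=3$ (case (f)) as the two odd-$e$ instances, obtain the same degenerate $r$-matrices with top row $(1,1,0,1,0)$ resp.\ $(0,1,0,0,1)$ over $d-1$ copies of $(1,1,0,0,0)$ resp.\ $(0,1,0,0,0)$, and dispatch $b\equiv 3,4$ via $K_r^{-1}\equiv 2^{-2r}K_{n-r}^{-1}$. The only (immaterial) difference is that you evaluate $\sum_{i,j}a_{i,j}2^{i-jr}$ by summing geometric series over the repeated rows, whereas the paper reads off the binary string by cyclically reordering the columns; both yield the same closed forms.
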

\begin{proof}
	We use the notation of Proposition~\ref{prop:gcds2}. We have $d=\gcd(n,r)=\frac{r}{b}$ and $\frac{n}{d}=5$. Further, we have $\frac{r}{d}=b$. The only two possible odd values for $e$ are $e=1$ and $e=3$ that are attained for $b\equiv 1 \pmod 5$ and $b\equiv 2 \pmod 5$, respectively. These correspond to case (b) and (f) in Proposition~\ref{prop:gcds2}. We get $K_r^{-1}\equiv \sum_{i=0}^{d-1}\sum_{j=0}^{4}a_{i,j}2^{i-jr} \pmod{2^n-1}$ where the values $a_{i,j}$ are the entries of the matrix $M_1$ if $e=1$ and $M_2$ if $e=3$:
	\begin{equation*}
		M_1=\begin{pmatrix}
			1&1&0&1&0 \\
			1&1&0&0&0 \\
			\vdots& & & & \vdots  \\
			1&1&0&0&0
		\end{pmatrix}, 
		M_2=\begin{pmatrix}
			0&1&0&0&1 \\
			0&1&0&0&0 \\
			\vdots& & & & \vdots \\
			0&1&0&0&0
		\end{pmatrix}.
	\end{equation*}
	We now write $K_r^{-1}$ in its usual binary representation. To do this, we write from right to left in the following way: We start with the first column, and then proceed in steps of length $e$ to the left (cyclically). So, for the case $e=1$, we start with column $0$ of $M_1$, then column $4$, then $3$, then $2$ and then $1$, resulting in:
	\begin{equation*}
	K_r^{-1}=(\underbrace{1,1,\dots,1,1}_{d\text{-times}},\underbrace{0,0,\dots,0,0}_{2d-1 \text{-times}},1,\underbrace{0,0,\dots,0,0}_{d \text{-times}},\underbrace{1,1,\dots,1,1}_{d \text{-times}})
	\end{equation*}
	and for the case $e=3$ the order of the columns is $0,2,4,1,3$, resulting in:
	\begin{equation*}
	K_r^{-1}=(\underbrace{0,0,\dots,0,0}_{d \text{-times}},\underbrace{1,1,\dots,1,1}_{d\text{-times}},\underbrace{0,0,\dots,0,0}_{d-1 \text{-times}},1,\underbrace{0,0,\dots,0,0}_{2d \text{-times}})
	\end{equation*}
	In the first case, we have $K_r^{-1}\equiv 2^{2d}K_{2d} \pmod {2^n-1}$ and in the second case $K_r^{-1}\equiv2^{2d}K_{d}\pmod {2^n-1}$.
	If $e=2$ and $e=4$  (corresponding to the values $b\equiv 3 \pmod 5$ and $b\equiv 4 \pmod 5$) we use the relation $K_{r}^{-1} \equiv 2^{-2r}K_{n-r}^{-1}\pmod{2^n-1}$ and apply the procedure above to $K_{n-r}$.
	\qed
\end{proof}

In fact, in  \cite{FFA:KyuSud14} several nice formulas for the inverses of $K_r$ modulo $2^{kr}-1$ for small fixed values of $k$ have been found. Our framework gives an explanation why these inverses have a strong structure: We have $\frac{kr}{\gcd(r,kr)}=k$, so the $r$-matrices always have $k$ columns. By Proposition~\ref{prop:gcds1} and \ref{prop:gcds2}, all but one row in the $r$-matrix are identical, so we get long runs of zeroes and ones (as observed in the proof of Proposition~\ref{prop:conjecture}). All of these formulas can also be obtained using our framework. In particular, it was shown in \cite{FFA:KyuSud14} that if $n=\frac{3r}{b}$ with $b|r$ and $\gcd(3,b)=1$ then the inverse of $K_r$ modulo $2^n-1$ has the lowest possible weight $2$. Using the results we obtained in the previous sections, we give an alternative proof and show additionally that (apart from sporadic cases for low values of $n$) these are the only cases where the inverses of Kasami exponents have weight $2$.

\begin{proposition}\label{prop:quad}
	Let $K_r$ be invertible modulo $2^n-1$ with $n\geq 6$ and $K_r^{-1}$ be the least positive residue of the inverse of $K_r$ modulo $2^n-1$. Then $\wt(K_r^{-1})=2$ if and only if $n=\frac{3r}{b}$ with $b|r$  and $\gcd(b,3)=1$. In these cases we have
	\begin{equation*}
		K_r^{-1} \equiv \begin{cases}
				2^{n-1}+2^{\frac{n}{3}-1}\pmod {2^n-1} & \text{if }b\equiv 1 \pmod 3 \\
				2^{n-1}+2^{\frac{2n}{3}-1} \pmod {2^n-1} & \text{if }b\equiv 2 \pmod 3 .
		\end{cases}
	\end{equation*}
\end{proposition}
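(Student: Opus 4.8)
The plan is to characterize exactly when $\wt(K_r^{-1})=2$ by systematically going through all the cases in which $K_r$ is invertible, using the weight formulas already established. By Proposition~\ref{prop:invexist}, invertibility means either $\frac{n}{\gcd(n,r)}$ is odd, or $\frac{n}{\gcd(n,r)}$ is even with $r$ even and $\gcd(r,n)=\gcd(3r,n)$. Setting $d=\gcd(n,r)$, the weight of $K_r^{-1}$ is governed by Corollaries~\ref{cor:algdeg} and~\ref{cor:algdeg2} and by Proposition~\ref{prop:gcds3}. First I would dispose of the even case $\frac{n}{d}$ even: Proposition~\ref{prop:gcds3} gives $\wt(K_r^{-1})=\frac{n+2}{2}$, which equals $2$ only for $n=2$, excluded by $n\geq 6$. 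So any weight-$2$ inverse must arise in the case $\frac{n}{d}$ odd.

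In the odd case, the strategy is to use the lower bounds from Corollary~\ref{cor:algdeg2}: if $\frac{n}{d}\equiv 1\pmod 3$ then $\wt(K_r^{-1})\geq\frac{n-d+3}{3}$, and if $\frac{n}{d}\equiv 2\pmod 3$ then $\wt(K_r^{-1})\geq\frac{n-2d+3}{3}$, while if $n\equiv 0\pmod 3$ (equivalently $\frac{n}{d}\equiv 0\pmod 3$) the weight is exactly $\frac{n-3d+4}{2}$ by Proposition~\ref{prop:gcds1}. Forcing each of these expressions to be $\leq 2$ yields a short list of Diophantine constraints relating $n$ and $d$, which (together with $\frac{n}{d}$ odd) pin down the possible ratios $\frac{n}{d}$. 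I expect the divisible-by-$3$ subcase to be the productive one: $\frac{n-3d+4}{2}=2$ gives $n=3d$, i.e. $\frac{n}{d}=3$, which matches the claimed $n=\frac{3r}{b}$ with $d=\gcd(n,r)=\frac{r}{b}$. One then checks that the non-divisible cases force $\frac{n}{d}$ so small (essentially $\frac{n}{d}\in\{1,2,4,5\}$) that, combined with $n\geq 6$, they either reduce to already-excluded sporadic low-$n$ situations or produce weight strictly above $2$; here I would lean on Proposition~\ref{prop:gcds2} cases (a)--(d), whose exact weight $\frac{n-d+2}{2}$ equals $2$ only at $n=d$, i.e. $r=0$.

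Having isolated $n=3d$ with $\frac{r}{d}=b$ and $\gcd(b,3)=1$ (the condition $\gcd(3,b)=1$ being exactly what keeps $3\nmid\frac{n}{d}$ false, i.e. what allows invertibility via $\frac{n}{d}=3$ odd), the remaining task is purely computational: exhibit the explicit binary representation. I would specialize Proposition~\ref{prop:gcds1} with $\frac{n}{d}=3$ (so $v=0$), where the two subcases $e=6k+1$ and $e=6k+5$ collapse to $e=1$ and $e=5\equiv 2$; since $e$ is the inverse of $b\bmod 3$, we have $e=1$ when $b\equiv 1\pmod 3$ and $e=2$ when $b\equiv 2\pmod 3$. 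Reading off the resulting $r$-matrix (a single nontrivial row over three columns) and reordering the columns in steps of $e$ exactly as in the proof of Proposition~\ref{prop:conjecture} produces the two-term expression, giving $K_r^{-1}\equiv 2^{n-1}+2^{\frac{n}{3}-1}$ when $b\equiv 1\pmod 3$ and $K_r^{-1}\equiv 2^{n-1}+2^{\frac{2n}{3}-1}$ when $b\equiv 2\pmod 3$.

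The main obstacle is not any single hard computation but the bookkeeping needed to guarantee that the case analysis is genuinely exhaustive and that no weight-$2$ inverse slips through outside $n=3d$. The lower bounds in Corollary~\ref{cor:algdeg2} are only inequalities, so ruling out, say, $\frac{n}{d}\equiv 1,2\pmod 3$ requires care: one must confirm that the bound $\geq\frac{n-d+3}{3}$ (resp. $\geq\frac{n-2d+3}{3}$) combined with $n\geq 6$ really excludes weight $2$ except in finitely many small configurations, and then verify by hand that those small configurations (the ``sporadic cases for low values of $n$'' alluded to in the text) do not secretly satisfy $n=\frac{3r}{b}$ or are below the threshold $n\geq 6$. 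I would therefore devote the bulk of the proof to this exhaustiveness check, presenting the final explicit formula as a direct corollary of Proposition~\ref{prop:gcds1} at $\frac{n}{d}=3$.
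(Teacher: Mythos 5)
Your proposal is correct and follows essentially the same route as the paper: dispose of the $\frac{n}{\gcd(n,r)}$ even case via Proposition~\ref{prop:gcds3}, extract $n=3d$ from the exact weight $\frac{n-3d+4}{2}$ of Proposition~\ref{prop:gcds1}, rule out the remaining odd cases using the weight formulas of Proposition~\ref{prop:gcds2} together with the bounds of Corollary~\ref{cor:algdeg2} and $n\geq 6$, and then read the two-term binary representation off the $r$-matrix by the column-reordering of Proposition~\ref{prop:conjecture} (with the $K_{n-r}$ reduction handling $e=2$, i.e. $b\equiv 2\pmod 3$).
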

\begin{proof}
	We go through the results in the earlier sections and check when $\wt(K_r^{-1})=2$ is fulfilled. In Proposition~\ref{prop:gcds1}, we have $\wt(K_r^{-1})=\frac{n-3d+4}{2}$ where $d=\gcd(r,n)$. We have $\frac{n-3d+4}{2}=2$ if and only if $n=3d$. So, $n=\frac{3r}{b}$ for some $b$ with $\gcd(b,3)=1$. We differentiate the two possible cases $e=1$ and $e=2$ corresponding to $b\equiv 1 \pmod 3$ and $b \equiv 2 \pmod 3$, respectively. If $e=1$, we are in Case (a) of Proposition~\ref{prop:gcds1} and the matrix $M_{a,r}$ looks as follows:
	\begin{equation*}
		M_{a,r}= \begin{pmatrix}
			0&0&0 \\
			\vdots & & \vdots \\
			0&0&0 \\
			1&1&0
		\end{pmatrix}.
	\end{equation*}
	Consequently, $K_r^{-1}\equiv 2^{\frac{n}{3}-1}+2^{\frac{n}{3}-1-r} \equiv 2^{n-1}+2^{\frac{n}{3}-1}\pmod {2^n-1}$. Here we used that $r\equiv \frac{n}{3} \pmod n$ since $b \equiv 1 \pmod 3$. If $e=2$, we apply the same procedure to $K_{n-r}$, so $K_{n-r}^{-1} \equiv 2^{\frac{n}{3}-1}+2^{\frac{n}{3}-1-(n-r)} \equiv  2^{n-1}+2^{\frac{n}{3}-1}\pmod {2^n-1}$ since here $r\equiv \frac{2n}{3} \pmod n$. Then $K_r^{-1} \equiv 2^{-2r}K_{n-r}^{-1}\equiv 2^{n-1}+2^{\frac{2n}{3}-1}\pmod {2^n-1}$.
	
	We now check Proposition~\ref{prop:gcds2}. In the Cases (a)-(d) we have $\wt(K_r^{-1})=\frac{n-d+2}{2}$, so $\wt(K_r^{-1})=2$ if and only if $d=n-2$. Since $d|n$ and $n>4$, this is not possible. 
	
	In the Cases (e) and (h) we have (using the notation from the proposition) $\wt(K_r^{-1})=\frac{n-d(s+1)+2}{2}$, so $\wt(K_r^{-1})=2$ if and only if $n-d(s+1)=2$. Since $d|n$, this implies $d|2$. Using the bound in Corollary \ref{cor:algdeg2}, we infer that $\wt(K_r^{-1})>2$ if $n\geq 6$. In the Cases (f) and (g) we have $\wt(K_r^{-1})=\frac{n-d(s+2)+2}{2}$. Again we get $d|2$ and the same argument as before yields $\wt(K_r^{-1})>2$.
	
	In Proposition~\ref{prop:gcds3} the inverses have always binary weight $\frac{n+2}{2}$, so no new cases are found.
\qed
\end{proof}
Note that the condition $n \geq 6$ is necessary. Indeed, for $n=5$ we get sporadic cases: Consider $K_2=13$ over $\F_{2^5}$. We have $\gcd(5,2)=1$ and $2\cdot 3 \equiv 1\pmod 5$, so $e=3$ and the inverse of $13$ modulo $2^5-1$ has weight $2$ by Corollary~\ref{cor:algdeg}.

\section{The Bracken-Leander exponent}
We now determine the inverse of the Bracken-Leander exponent $ BL_r=2^{2r}+2^r+1$ modulo $2^{4r}-1$ with $r$ odd. In this case, the exponent is not independent from the field size. Because of this, finding the inverse is much easier. We again use the modular add-with-carry approach. Theorem~\ref{thm:xiang_orig} applied to the Bracken-Leander exponents yields the following condition for the carry sequence.

\begin{theorem} \label{thm:BL}
	Let $r$ odd, $n=4r$, $a \in \{1,\dots,2^n-2\}$ and $BL_r$ be the Bracken-Leander exponent. We denote by $a=(a_{n-1},\dots,a_0)$ the binary expansion of $a$. 
	The following are equivalent:
	\begin{enumerate}[label=(\alph*)]
		\item $a$ is the inverse of $BL_r$ modulo $2^{n}-1$.
		\item There exists a carry sequence $c=(c_{n-1},\dots,c_{0})$ with $c_i \in \{0,1,2\}$ such that 
	\begin{align} 
		2c_0-c_{-1}+1&=a_{-2r}+a_{-r}+a_0 \label{eq:BL1}\\
		2c_i-c_{i-1}&=a_{i-2r}+a_{i-r}+a_i \text{ for all } i>0\label{eq:BL2}.
	\end{align} 
	Here, the indices are seen as elements in $\Z_{n}$. 
	\end{enumerate}
	The carry sequence in (b) is unique.
\end{theorem}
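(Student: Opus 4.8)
The plan is to specialize Theorem~\ref{thm:xiang_orig} to the present setting: asking that $a$ be the inverse of $BL_r$ modulo $2^n-1$ is exactly the condition $BL_r \cdot a \equiv 1 \pmod{2^n-1}$, so I would apply Theorem~\ref{thm:xiang_orig} with $s = 1$. The binary expansion of $s = 1$ has $s_0 = 1$ and $s_i = 0$ for all $i \neq 0$.

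I would then read off the coefficients of the natural representation $BL_r = 2^{2r} + 2^r + 1$, namely $t_0 = t_r = t_{2r} = 1$ with all other $t_j = 0$. This gives $t_+ = 3$ and $t_- = 0$, so the admissible range for the carry digits in Theorem~\ref{thm:xiang_orig} is $c_i \in \{t_-, \dots, t_+ - 1\} = \{0, 1, 2\}$, as claimed, and the right-hand side of the fundamental equation~\eqref{eq:fund_orig} specializes to $\sum_j t_j a_{i-j} = a_{i-2r} + a_{i-r} + a_i$.

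It then remains to substitute $s$ into~\eqref{eq:fund_orig} and split according to the index. For $i = 0$ we have $s_0 = 1$, giving $2c_0 - c_{-1} + 1 = a_{-2r} + a_{-r} + a_0$, which is Eq.~\eqref{eq:BL1}. For $i > 0$ we have $s_i = 0$, giving $2c_i - c_{i-1} = a_{i-2r} + a_{i-r} + a_i$, which is Eq.~\eqref{eq:BL2}. The uniqueness assertion is inherited verbatim from the uniqueness clause of Theorem~\ref{thm:xiang_orig}.

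Since the result is a direct specialization, there is no real obstacle: the only points needing care are the correct computation of $t_+$ and $t_-$ (which fixes the carry range $\{0,1,2\}$, one digit larger than the Kasami case) and the bookkeeping of the single nonzero digit $s_0$, which is what produces the inhomogeneous term $+1$ in the $i = 0$ equation alone. I would also note that the hypotheses $r$ odd and $n = 4r$ play no role in establishing this equivalence; they merely fix the setting in which the inverse is subsequently computed. The derivation parallels those of Theorems~\ref{thm:xiang_gold} and~\ref{thm:xiang} for the Gold and Kasami exponents.
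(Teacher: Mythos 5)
Your proposal is correct and matches the paper's treatment exactly: the paper states Theorem~\ref{thm:BL} as an immediate specialization of Theorem~\ref{thm:xiang_orig} with $s=1$ and the representation $t_0=t_r=t_{2r}=1$, which is precisely your computation of $t_+=3$, $t_-=0$, the carry range $\{0,1,2\}$, and the split of Eq.~\eqref{eq:fund_orig} into the $i=0$ and $i>0$ cases. Your side remark that $r$ odd and $n=4r$ are not needed for the equivalence itself is also accurate.
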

Observe that $\gcd(r,n)=r$ and $\frac{n}{\gcd(r,n)}=4$. The case here is thus similar to the $\frac{n}{\gcd(r,n)}$ even case of the Kasami functions. We again use $r$-matrices so that Eq.~\eqref{eq:BL1} and~\eqref{eq:BL2} have an easier structure. 

\begin{theorem} \label{thm:BL2}
	Let $r$ odd, $n=4r$, $a \in \{1,\dots,2^n-2\}$ and $BL_r=2^{2r}+2^r+1$ be the Bracken-Leander exponent. We denote by $a=(a_{n-1},\dots,a_0)$ the binary expansion of $a$.  Moreover, let \begin{equation*}
		M_{a,r}=\begin{pmatrix}
			a_{0,0}&a_{0,1}&a_{0,2}&a_{0,3} \\
			a_{1,0}&a_{1,1}&a_{1,2}&a_{1,3} \\
			\vdots& & &\vdots \\
			a_{r-1,0}&a_{r-1,1} &a_{r-1,2} & a_{r-1,3}
		\end{pmatrix}
	\end{equation*} be the $r$-matrix of $a$, i.e. $a\equiv \sum_{i=0}^{r-1}\sum_{j=0}^{3}a_{i,j}2^{i-jr} \pmod{2^n-1}$. 	The following are equivalent:
	\begin{enumerate}[label=(\alph*)]
		\item $a$ is the inverse of $K_r$ modulo $2^n-1$.
			\item There exists an $r$-matrix for the carry sequence $c$ of the form 
			\begin{equation*}
		M_{c,r}=\begin{pmatrix}
			c_{0,0}&c_{0,1}&c_{0,2}&c_{0,3} \\
			c_{1,0}&c_{1,1}&c_{1,2}&c_{1,3} \\
			\vdots& & &\vdots \\
			c_{r-1,0}&c_{r-1,1} &c_{r-1,2}& c_{d-1,3}
		\end{pmatrix}
	\end{equation*} 
			with $c_{i,j} \in \{0,1,2\}$  such that the following equations hold:
	\begin{align} 
		2c_{0,0}-c_{r-1,1}+1&=a_{0,2}+a_{0,1}+a_{0,0} \label{eq:rBL1} \\
		2c_{0,j}-c_{r-1,j+1}&=a_{0,j+2}+a_{0,j+1}+a_{0,j}  \text{ for }j\in \{1,2,3\} \label{eq:rBL2}\\
		2c_{i,j}-c_{i-1,j}&=a_{i,j+2}+a_{i,j+1}+a_{i,j} \text{ for all }i \in \{1,\dots,r-1\}, j \in \{0,1,2,3\}. \label{eq:rBL3}
	\end{align}
	\end{enumerate}
	The carry sequence (and thus its associated $r$-matrix) in (b) is unique.
\end{theorem}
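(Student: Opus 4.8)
The plan is to obtain Theorem~\ref{thm:BL2} as a direct reindexing of Theorem~\ref{thm:BL}, following the very same pattern that takes Theorem~\ref{thm:xiang_gold} to Theorem~\ref{thm:gcd>1_gold} (and Theorem~\ref{thm:xiang} to Theorem~\ref{thm:gcd>1}). Since $\gcd(r,n)=r$, we have $d=r$ and $\frac{n}{d}=4$, and the relevant decimation parameter is $\frac{r}{d}=1$, whose inverse modulo $4$ is $e=1$. The $r$-matrix merely records the identifications $a_{i,j}=a_{i-jr}$ and $c_{i,j}=c_{i-jr}$, with indices read in $\Z_n$ and $0\le i\le r-1$, $0\le j\le 3$; in particular the carry range $c_{i,j}\in\{0,1,2\}$ is inherited verbatim from Theorem~\ref{thm:BL}. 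The first step is to rewrite the right-hand sides: at matrix position $(i,j)$, i.e. sequence index $i-jr$, the three summands in Eq.~\eqref{eq:BL2} become $a_{i-jr}+a_{(i-jr)-r}+a_{(i-jr)-2r}=a_{i,j}+a_{i,j+1}+a_{i,j+2}$, with the column index taken modulo $4$. This is exactly the right-hand side in Eq.~\eqref{eq:rBL1}--\eqref{eq:rBL3}, so the only genuine bookkeeping concerns the predecessor carry term $c_{(i-jr)-1}$.

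Next I would carry out that predecessor computation, which splits into two cases. If $i>0$, then $(i-jr)-1=(i-1)-jr$ sits at matrix position $(i-1,j)$, so the predecessor of $c_{i,j}$ is $c_{i-1,j}$, yielding Eq.~\eqref{eq:rBL3}. If $i=0$, then $-jr-1\equiv (r-1)-(j+1)r\pmod{4r}$, so the predecessor of $c_{0,j}$ is $c_{r-1,j+1}$ with the column index read modulo $4$; this gives the ``boundary'' relations Eq.~\eqref{eq:rBL1} (for $j=0$) and Eq.~\eqref{eq:rBL2} (for $j=1,2,3$, where $j=3$ wraps the column index back to $0$). This is precisely the modular predecessor identity established, in the general $\gcd>1$ setting, in the proof of Theorem~\ref{thm:gcd>1_gold}; here it specializes to $e=1$, so the argument is essentially trivial.

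Finally, I would account for the inhomogeneous $+1$. Saying that $a$ is the inverse of $BL_r$ means $s\equiv BL_r\cdot a\equiv 1\pmod{2^n-1}$, so in the underlying Theorem~\ref{thm:xiang_orig} we have $s_i=1$ exactly when $i=0$ and $s_i=0$ otherwise; the sequence index $0$ is matrix position $(0,0)$, which is why the $+1$ appears only in Eq.~\eqref{eq:rBL1}. Collecting the three cases establishes the equivalence of (a) and (b), and uniqueness of the carry $r$-matrix is immediate from the uniqueness of the carry sequence asserted in Theorem~\ref{thm:BL}. The main (and essentially only) obstacle is the modular predecessor bookkeeping in the $i=0$ case, but because $e=1$ it presents no real difficulty, so the verification is routine and the proof can be stated by reference to the analogous Gold/Kasami arguments.
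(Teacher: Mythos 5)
Your proposal is correct and follows exactly the route the paper intends: Theorem~\ref{thm:BL2} is the specialization of the $r$-matrix reindexing used for Theorems~\ref{thm:gcd>1_gold} and~\ref{thm:gcd>1}, with the predecessor identity $-jr-1\equiv (r-1)-(j+1)r\pmod{4r}$ (i.e.\ $e=1$) accounting for the boundary equations and the $s=1$ term placing the $+1$ at position $(0,0)$. The paper leaves this verification implicit, and your bookkeeping matches it in every detail, including the carry range $\{0,1,2\}$ and the uniqueness claim.
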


It is easy to derive some strong necessary conditions from the equations. For example Eq.~\eqref{eq:rBL3} implies that, if $c_{i,j}=0$ for some $i>0$, then necessarily $c_{i-1,j}=a_{i,j+2}=a_{i,j+1}=a_{i,j}=0$, which inductively leads to $c_{i',j}=a_{i',j+2}=a_{i',j+1}=a_{i',j}=0$ for all $0<i'<i$. With some examples for small values of $n$, it is then quite easy to guess the correct $r$-matrices for the sequence $a$ and its associated carry sequence $c$.

\begin{proposition}
	Let $r$ odd, $n=4r$ and $BL_r=2^{2r}+2^r+1$ be the Bracken-Leander exponent. Then $BL_r^{-1}\equiv \sum_{i=0}^{r-1}\sum_{j=0}^{3}a_{i,j}2^{i-jr} \pmod{2^n-1}$ where the values $a_{i,j}$ are the entries of the matrix
	\begin{equation*}M_{a,r}=
		\begin{pmatrix}
			1&1&1&0 \\
			0&0&0&0 \\
			1&1&1&1 \\
			0&0&0&0 \\
			\vdots & & & \vdots\\
			1&1&1&1 \\
				0&0&0&0	\\
			1&1&1&1
		\end{pmatrix}.
	\end{equation*}
	We have $\wt(BL_r^{-1})=\frac{n+2}{2}$.
\end{proposition}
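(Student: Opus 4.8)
The plan is to invoke Theorem~\ref{thm:BL2}, which reduces the claim to exhibiting a single carry matrix $M_{c,r}$ with entries in $\{0,1,2\}$ for which the proposed $M_{a,r}$ satisfies Eq.~\eqref{eq:rBL1}--\eqref{eq:rBL3}; the uniqueness assertion in that theorem then guarantees that this $M_{a,r}$ really is the $r$-matrix of $BL_r^{-1}$. Guided by small cases (for instance $r=3$, where one computes $BL_3^{-1}\equiv 2917 \pmod{4095}$ and reads off a constant carry pattern), I would guess the carry matrix whose rows are constant, namely $c_{i,j}=2$ for all $j\in\{0,1,2,3\}$ when $i$ is even and $c_{i,j}=1$ for all $j$ when $i$ is odd. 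With this guess in hand the verification splits into the interior rows and the boundary row.

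First I would verify Eq.~\eqref{eq:rBL3} for the interior rows $1\le i\le r-1$. Here the right-hand side $a_{i,j+2}+a_{i,j+1}+a_{i,j}$ depends only on whether row $i$ of $M_{a,r}$ is the all-zero row (odd $i$, giving sum $0$) or the all-one row (even $i\ge 2$, giving sum $3$), independently of $j$ since the column indices are read modulo $4$. The left-hand side $2c_{i,j}-c_{i-1,j}$ equals $2\cdot 1-2=0$ for odd $i$ and $2\cdot 2-1=3$ for even $i$, so Eq.~\eqref{eq:rBL3} holds in both parities. Next I would treat the boundary via Eq.~\eqref{eq:rBL1}--\eqref{eq:rBL2}. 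Since $r$ is odd, $r-1$ is even, so $c_{r-1,j}=2$ for every $j$; this is the one place the oddness of $r$ enters. Reading off row $0$ of $M_{a,r}$, namely $(a_{0,0},a_{0,1},a_{0,2},a_{0,3})=(1,1,1,0)$, the right-hand sides of Eq.~\eqref{eq:rBL1}--\eqref{eq:rBL2} are $3,2,2,2$ (again with column indices modulo $4$), while the left-hand sides are $2c_{0,0}-c_{r-1,1}+1=2\cdot2-2+1=3$ and $2c_{0,j}-c_{r-1,j+1}=2\cdot2-2=2$, so all four boundary equations are satisfied.

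For the weight I would simply count the ones in $M_{a,r}$: row $0$ contributes $3$, each odd row contributes $0$, and each even row $i\ge 2$ contributes $4$. As $r-1$ is even there are exactly $\tfrac{r-1}{2}$ such even rows, so $\wt(BL_r^{-1})=3+4\cdot\tfrac{r-1}{2}=2r+1=\tfrac{n+2}{2}$.

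I expect the only genuinely delicate point to be the cyclic coupling between row $0$ and row $r-1$ in Eq.~\eqref{eq:rBL1}--\eqref{eq:rBL2}: solving the interior recursion $2c_{i,j}-c_{i-1,j}=0,3$ in alternating parities is routine, but one must check that the constant-row carry \emph{closes up} consistently around the cycle. It does so precisely because $r$ odd forces $c_{r-1,\cdot}=2$; had $r$ been even one would instead have $c_{r-1,\cdot}=1$ and the boundary equations would fail, which is consistent with the hypothesis that $r$ be odd.
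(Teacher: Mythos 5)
Your proposal is correct and follows essentially the same route as the paper: you guess the identical alternating carry matrix ($c_{i,j}=2$ for $i$ even, $1$ for $i$ odd), verify Eq.~\eqref{eq:rBL1}--\eqref{eq:rBL3} case by case, and count ones to get $\wt(BL_r^{-1})=\tfrac{n+2}{2}$. Your extra remark on why oddness of $r$ is needed for the cyclic closure at row $r-1$ is a nice clarification but does not change the argument.
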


\begin{proof}
	The $r$-matrix of the corresponding carry sequence is
		\begin{equation*}(c_{i,j})=
		\begin{pmatrix}
			2&2&2&2 \\
			1&1&1&1 \\
			2&2&2&2 \\
			1&1&1&1  \\
			\vdots & & & \vdots\\
		2&2&2&2 \\
		1&1&1&1\\
			2&2&2&2
		\end{pmatrix}.
	\end{equation*}
	We verify Eq.~\eqref{eq:rBL1}-\eqref{eq:rBL3}. 
	Eq.~\eqref{eq:rBL1} holds because $c_{0,0}=c_{r-1,1}=2$ and $a_{0,0}=a_{0,1}=a_{0,2}=1$. Eq.~\eqref{eq:rBL2} holds because $c_{0,j}=c_{r-1,j+1}=2$ and $(a_{0,j},a_{0,j+1},a_{0,j+2})\in \{(1,1,0),(1,0,1),(0,1,1)\}$ if $j\in\{1,2,3\}$. 
	
	It only remains to check Eq.~\eqref{eq:rBL3}. For $i$ odd, we have $c_{i,j}=1$, $c_{i-1,j}=2$ and $a_{i,j}=a_{i,j+1}=a_{i,j+2}=0$. For $i>0$ even, we have $c_{i,j}=2$, $c_{i-1,j}=1$ and $a_{i,j}=a_{i,j+1}=a_{i,j+2}=1$, so Eq.~\eqref{eq:rBL3} is satisfied.
	
	To determine $\wt(BL_r^{-1})$, we count the number of ones in $M_{a,r}$, so $\wt(BL_r^{-1})=4\frac{r+1}{2}-1=\frac{n+2}{2}$.
	\qed
\end{proof}

\section{Conclusion}
In this paper, we introduced a new approach to find inverses of elements in $\Z_{2^n-1}$, using the modular add-with-carry approach. With this technique, we determined the inverse of all Gold exponents $G_r = 2^r+1$ and Kasami exponents $K_r=2^{2r}-2^r+1$ modulo $2^n-1$ (if they exist) as well as the inverse of the Bracken-Leander exponent $BL_r=2^{2r}+2^r+1$ modulo $2^{4r}-1$ with $r$ odd. With our contribution, the binary representations of the inverses of all known APN exponents as well as the inverses of all exponents that give rise to $4$-differentially uniform permutations in even dimension are found. The more general problem of inverting a given element $l$ in $\Z_{2^n-1}$ for all $n$ is still not well understood. It is a natural question if the approach using the modular add-with-carry algorithm can be generalized to other exponents. For every invertible $l$, we can find a defining set of equations for the binary representation of $l^{-1}$ and the corresponding carry sequence in the style of Eq.~\eqref{eq:fund_orig} in Theorem~\ref{thm:xiang_orig}. The difficulty then lies in finding the sequences that satisfy the equations. This has to be done on a case by case basis. 

Inversion in $\Z_{2^n-1}$ is not only interesting for questions relating to differential uniformity. For example, if $l$ is a \emph{complete permutation polynomial (CPP) exponent over} $\F_q$ (i.e. there exists an $a \in \F_q$ such that $ax^l$ and $ax^l+x$ are permutation polynomials), then also its inverse $l^{-1}$ modulo $q-1$ is a CPP exponent \cite{JAMS:NR82}. Several CPP exponents in even characteristic have been found (e.g. \cite{SIAM:CPP}, \cite{FFA:CCP}, \cite{FFA:CPP2}). For a complete classification of CPP exponents, finding explicit formulas for the corresponding inverses is an interesting research problem.

The modular add-with-carry approach can be easily modified to work also in the ring $\Z_{p^n-1}$ for a prime $p>2$ \cite[Theorem 4.1]{paryaddcarry}. In particular, it can be used to tackle the problem of inversion in $\Z_{p^n-1}$  (corresponding to inversion of monomials in odd characteristic). However, the equations in the style of Eq.~\eqref{eq:fund_orig} that have to be checked become more complicated. 

 \section*{Acknowledgments}
	I sincerely thank the reviewers for their careful reading of the paper, pointing out several typos and some helpful suggestions about the presentation of the results. Moreover, I would like to thank Gohar Kyureghyan for fruitful discussions and encouragement in the pursuit of this problem.
\bibliography{kasamiinverse_bib}
\end{document}